\documentclass[11pt, reqno]{amsart}
\usepackage{lipsum}
\usepackage{amsthm,amsmath,amssymb}
\usepackage{mathrsfs}
\usepackage[colorlinks, linkcolor=black, citecolor=blue, linktocpage,urlcolor=blue]{hyperref}
\usepackage[colorlinks, linkcolor=black, citecolor=blue, linktocpage,urlcolor=blue]{hyperref}
\numberwithin{equation}{section}
\theoremstyle{definition}
\newtheorem{thm}{Theorem}[section]
\newtheorem{definition}[thm]{Definition}
\newtheorem{proposition}[thm]{Proposition}

\newtheorem{lemma}[thm]{Lemma}
\newtheorem{corollary}[thm]{Corollary}

\newtheorem{remark}[thm]{Remark}

\begin{document}
	\setlength{\abovedisplayskip}{10pt}
	\setlength{\belowdisplayskip}{5pt}
	
	\newcommand{\Ext}{\bigwedge\nolimits}
	\newcommand{\Div}{\operatorname{div}}
	\newcommand{\Hol} {\operatorname{Hol}}
	\newcommand{\diam} {\operatorname{diam}}
	\newcommand{\Scal} {\operatorname{Scal}}
	\newcommand{\scal} {\operatorname{scal}}
	\newcommand{\Ric} {\operatorname{Ric}}
	\newcommand{\Hess} {\operatorname{Hess}}
	\newcommand{\grad} {\operatorname{grad}}
	\newcommand{\Rm} {\operatorname{Rm}}
	\newcommand{ \Rmzero } {\mathring{\Rm}}
	\newcommand{\Rc} {\operatorname{Rc}}
	\newcommand{\Curv} {S_{B}^{2}\left( \mathfrak{so}(n) \right) }
	\newcommand{ \tr } {\operatorname{tr}}
	\newcommand{ \Riczero } {\mathring{\Ric}}
	\newcommand{ \Ad } {\operatorname{Ad}}
	\newcommand{ \dist } {\operatorname{dist}}
	\newcommand{ \rank } {\operatorname{rank}}
	\newcommand{\Vol}{\operatorname{Vol}}
	\newcommand{\dVol}{\operatorname{dVol}}
	\newcommand{ \zitieren }[1]{ \hspace{-3mm} \cite{#1}}
	\newcommand{ \pr }{\operatorname{pr}}
	\newcommand{\diag}{\operatorname{diag}}
	\newcommand{\Lagr}{\mathcal{L}}
	\newcommand{\av}{\operatorname{av}}
	\newcommand{ \floor }[1]{ \lfloor #1 \rfloor }
	\newcommand{ \ceil }[1]{ \lceil #1 \rceil }
	\newcommand{\Sym} {\operatorname{Sym}}
	\newcommand{\bcirc}{ \ \bar{\circ} \ }
	\newcommand{\sign}[1]{\operatorname{sign}(#1)}
	\newcommand{\cone}{\operatorname{cone}}
	\newcommand{\pbd}{\varphi_{bar}^{\delta}}
	\newcommand{\End}{\operatorname{End}}
	
	\renewcommand{\labelenumi}{(\alph{enumi})}
	\newtheorem{maintheorem}{Theorem}[]
	\renewcommand*{\themaintheorem}{\Alph{maintheorem}}
	\newtheorem*{remark*}{Remark}
	\title{Some vanishing theorems on $(p,q)$ double form and curvature operator of the second kind}
	
	
	\author{Hai-Ping Fu}
	\address{Department of Mathematics, Nanchang University, Nanchang 330031, People’s Republic of China}
	\curraddr{}
	\email{mathfu@126.com}
	\thanks{Supported in part by National Natural Science Foundations of China \#12461008 and 12271069,  Jiangxi Province
		Natural Science Foundation of China \#20202ACB201001, Jiangxi Province Graduate Student Innovation Special Fund Project \#YC2025-B037.}
	
	\author{Yao Lu}
	\address{Department of Mathematics, Nanchang University, Nanchang 330031, People’s Republic of China}
	\curraddr{}
	\email{luyao@email.ncu.edu.cn}
	\thanks{}
	
	\subjclass[2010]{Primary 53C20, 53C24, 53C25.}
	
	\keywords{}
	
	\date{}
	
	\dedicatory{}
	
	\begin{abstract}
		We establish a Bochner formula for double forms in terms of the curvature operator of the second kind. As an application, we prove that a complete Riemannian manifold of dimension $n \ge 4$
		with harmonic Weyl tensor and $\frac{3(n-1)}{4}$-nonnegative curvature operator of the second kind is either globally conformally equivalent to a space of positive
		constant curvature or is isometric to a flat manifold. We prove vanishing theorems for the Lichn\'{e}rowicz Laplacian $\Delta$ on $(p,q)$ double forms. These generalize recent results of Nienhaus-Petersen-Wink \cite{NPW23} and Dai-Fu-Lu-Yang \cite{DF24,DFY24,FL1,FLD}.
	\end{abstract}
	\maketitle
	
	\section{Introduction}
	The Lichn\'{e}rowicz Laplacians acts on $(0,p)$ tensor is defined as
	\[{\Delta }_{L}T={\nabla }^*\nabla T+cRic_L(T)\]
	for a suitable constant $c>0.$ For example, the Hodge Laplacian on differential forms
	corresponds to $c=1$, and symmetric $(0,2)$-tensors as well as the curvature tensor is $c = \frac{1}{2}$. A tensor $T$ is called harmonic if $\Delta_T = 0$.
	
	In \cite{PW21}, Petersen and Wink give a new method to control the curvature term of Lichnerowicz Laplacians on $(0, p)$-tensors $T$, and obtain vanishing and
	estimation theorem for the $p$th Betti number of closed $n$-dimensional Riemannian manifolds with a lower bound on the average of the lowest $(n-p)$-eigenvalues of the curvature operator. By investigating the Bochner formulas for various geometric quantities, there are a large number of geometric rigidity theorems about eigenvalues of the curvature operator. see \cite{Tra17,BG24,CMR24,PW22,HZ25}
	
	The curvature term of Lichnerowicz Laplacians on $(0, k)$-tensors $T$, can also be control by curvature operator of the second kind. In \cite{NPW23}, Nienhaus-Petersen-Wink establish that the curvature term of the Lichnerowicz Laplacian admits an expression in terms of the curvature operator of the second kind and obtain a vanishing theorem for Betti numbers under an eigenvalue pinching condition on the curvature operator of the second kind. Based on this approach, Dai and Fu \cite{DF24} studied the Bochner formula for the curvature tensor in Einstein manifolds and obtained a sphere theorem concerning the eigenvalue pinching of the curvature operator of the second kind. Recently, this result has been appropriately generalized on manifold with harmonic curvature and harmonic Weyl curvature. See \cite{DFY24,FLD,FL1}. More recently results can found in \cite{CGT23,Li22,Li23a,Li24,Li24b,Li25}
	
	A $(p, q)$-double form $\omega$ on a Riemannian manifold $(M, g)$ is a covariant tensor of rank $p+q$ which is alternating with respect to the first $p$ and the last $q$ arguments. The double form $\omega$ can beregarded as a vector valued differential $p$-form over $M$ or alternatively as a vector valued differential $q$-form over $M$. In \cite{Labbi3}, Labbi introduced a new product on double forms to establish index-free formulas for the curvature terms in the Weitzenb\"{o}ck formulas corresponding to the Laplacians $\Delta_L$ and proved vanishing theorems for $\Delta_L$ on symmetric double forms of arbitrary order.
	
	In this paper, we study the curvature term of the Lichn\'{e}rowicz Laplacians acting on $(p,q)$ double form and derive a new Bochner
	formula in terms of the curvature operator of the second kind. We obtain vanishing theorems for doubles forms of arbitrary order.
	
	First, We first establish a vanishing theorem for symmetric $(p, p)$-double forms. Set
	
	\begin{equation}\label{constant}
		C(p,n)=\left\{ \begin{aligned}
			& \frac{n(n-1)(n+2)}{2(n^2-2)}\quad for \quad p=1, \\ 
			& \frac{3(n-1)}{4}\quad for \quad p=2, \\ 
			& \frac{n-p+1}{2} \quad for \quad p\ge 3.
		\end{aligned} \right.
	\end{equation}
	\begin{maintheorem}\label{theorem-A}
		Let $(M, g)$ be a closed and connected Riemannian manifold of dimension $n \ge 3$ and
		let $\omega$ be a harmonic $(p, p)$ symmetric double form on $M$, satisfying the first
		Bianchi identity and its first contraction vanish. If the curvature operator of the second kind $\mathring{R}$ is $C(p,n)$-positive (resp. $C(p,n)$-nonnegative), then the double form $\omega $ vanishing (resp. $\omega $ must be parallel).
	\end{maintheorem}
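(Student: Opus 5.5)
The plan is the standard Bochner technique, with the curvature term rewritten through the curvature operator of the second kind in the manner of Nienhaus--Petersen--Wink \cite{NPW23}. Regard $\omega$ as a $(0,2p)$-tensor. The Lichn\'erowicz Laplacian on $(p,p)$ double forms is
\[
\Delta_L\omega=\nabla^*\nabla\omega+\tfrac12\Ric_L(\omega),
\]
so harmonicity gives
\[
0=\int_M\langle\Delta_L\omega,\omega\rangle=\int_M|\nabla\omega|^2+\tfrac12\int_M\langle\Ric_L(\omega),\omega\rangle .
\]
It therefore suffices to prove the pointwise estimate $\langle\Ric_L(\omega),\omega\rangle\ge 0$ whenever $\mathring R$ is $C(p,n)$-nonnegative, with strict inequality for $\omega\neq0$ in the $C(p,n)$-positive case.

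\emph{Step 1: curvature term via the second kind.} Identify $S^2_0(T_xM)$ with the traceless symmetric $2$-tensors and take an orthonormal basis $\{S_j\}$ of it. By the formula of \cite{NPW23}, for any tensor $T$
\[
\langle\Ric_L(T),T\rangle=\sum_{\alpha}\lambda_\alpha\,|S_\alpha T|^2\cdot c_n+(\text{Ricci/scalar correction}),
\]
where $S$ acts on tensors as a derivation and $\lambda_\alpha$ are the eigenvalues of $\mathring R$ with eigenvectors $S_\alpha$. For a double form that satisfies the first Bianchi identity and has vanishing first contraction, the Ricci and trace correction terms should cancel, or reduce to multiples of $\sum_\alpha\lambda_\alpha|S_\alpha\omega|^2$. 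This leaves
\[
\langle\Ric_L(\omega),\omega\rangle=\kappa\sum_\alpha\lambda_\alpha|S_\alpha\omega|^2 .
\]

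\emph{Step 2: the weight estimate (main obstacle).} Following the approach of \cite{NPW23,DF24}, $C$-nonnegativity implies $\sum_\alpha\lambda_\alpha a_\alpha\ge0$ provided that
\[
\max_\alpha a_\alpha\le \frac1C\sum_\alpha a_\alpha,\qquad a_\alpha=|S_\alpha\omega|^2 .
\]
Two computations are needed. First, compute $\sum_\alpha|S_\alpha\omega|^2$ exactly; it is basis independent, being a Casimir-type quantity. For Bianchi, trace-free $(p,p)$ forms it should equal an explicit multiple of $|\omega|^2$, depending on $n$ and $p$. Second, bound $|S\omega|^2\le K|S|^2|\omega|^2$ for a unit $S\in S^2_0$. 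For this, diagonalize $S=\sum\mu_ie_i\otimes e_i$, which gives
\[
S\omega(e_I,e_J)=\Big(\sum_{i\in I}\mu_i+\sum_{j\in J}\mu_j\Big)\omega_{IJ}.
\]
Maximize $\sum_{I,J}\bigl(\mu_I+\mu_J\bigr)^2\omega_{IJ}^2$ subject to $\sum\mu_i=0$ and $\sum\mu_i^2=1$, using the symmetry of $\omega$, the Bianchi identity and tracelessness to restrict the admissible $\omega_{IJ}$. The ratio of the sum to the maximum should produce $C(p,n)$: $\frac{n-p+1}{2}$ generically, with sharper constants for $p=1,2$ where the extra constraints matter more. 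The case $p=1$, traceless symmetric $2$-tensors, reproduces the known constant of \cite{NPW23}; $p=2$ is the Weyl-type case of \cite{DF24}. This optimization is the step I expect to be hardest, especially proving that the constant is attained at the claimed configuration for $p\ge3$.

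\emph{Step 3: conclusion.} With $\sum\lambda_\alpha|S_\alpha\omega|^2\ge0$, the integral identity forces $\nabla\omega=0$, so $\omega$ is parallel in the nonnegative case. In the positive case, strictness gives $\sum\lambda_\alpha|S_\alpha\omega|^2>0$ unless $S_\alpha\omega=0$ for all $\alpha$. Invariance of $\omega$ under all of $S^2_0$ together with trace-freeness forces $\omega=0$, since $S^2_0$ and $\mathfrak{so}(n)$ generate $\mathfrak{gl}(n)$ and a $\mathfrak{gl}$-invariant tensor of this type vanishes. Hence $\omega$ vanishes identically.
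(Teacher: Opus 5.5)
\textbf{Verdict: there is a genuine gap.} The argument breaks at Step 1, and this is not a matter of filling in details. You have $S_\alpha$ act as a derivation; that is the action you use in Step 2, with factor $\sum_{i\in I}\mu_i+\sum_{j\in J}\mu_j$. With that action, the identity $\langle\Ric_L(\omega),\omega\rangle=\kappa\sum_\alpha\lambda_\alpha|S_\alpha\omega|^2$ is false. Neither the Bianchi identity nor $\boldsymbol{c}\omega=0$ removes the Ricci, scalar and mixed $W^{I',J'}$ terms of the Nienhaus--Petersen--Wink type formula (compare Proposition~\ref{proposition1} with $q=p$).

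This already fails for $p=1$, i.e.\ $\omega=h$ a trace-free symmetric $2$-tensor. There one has $g(\Ric_L(h),h)=2R_{ij}h_{ik}h_{jk}+2\langle\mathring R h,h\rangle$. On the other hand, a direct computation (the case $p=q=1$ of Proposition~\ref{proposition2}) gives
\[
\sum_\alpha\lambda_\alpha|S_\alpha h|^2=\Bigl(1+\frac8n\Bigr)R_{ij}h_{ik}h_{jk}-\langle\mathring R h,h\rangle-\frac{4}{n^2}\scal\,|h|^2 .
\]
The round sphere forces $\kappa=\frac{2n^2}{n^2+2n-4}$. A Ricci-flat (pure Weyl, $n\ge 4$) algebraic curvature tensor with $\langle\mathring W h,h\rangle\neq 0$ forces $\kappa=-2$. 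So no such $\kappa$ exists, and the quantity you propose to weigh in Step 2 is the wrong one. Step 2 itself is also only announced (``should equal'', ``should produce $C(p,n)$''), not carried out.

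\textbf{The missing idea.} Replace the full derivation by the twisted action $S\otimes 1-1\otimes S$: the derivation on the first $p$ slots minus the derivation on the last $p$ slots. This is exactly what the tensors $S^{I,J}$ of \eqref{eq:Sijkl-def} encode:
\[
\langle S^{I,J},S\rangle=2\bigl((S\otimes1-1\otimes S)\omega\bigr)_{I,J}.
\]
For $p=1$ this is $2[S,h]_{ij}$ rather than $(Sh+hS)_{ij}$, and these tensors are automatically trace-free. For symmetric $(p,p)$ forms satisfying the first Bianchi identity, the paper proves the exact formula \eqref{eq:bochner-Sijkl}, $g(\Ric_L(\omega),\omega)=\frac12\sum_{I,J}\langle\mathring R(S^{I,J}),S^{I,J}\rangle$, with no Ricci remainder. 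The cancellation needs the Bianchi identity in the nontrivial form of Proposition~\ref{estimate1}. The condition $\boldsymbol{c}\omega=0$ plays no role there; it only enters the total weight $\sum_{I,J}|S^{I,J}|^2=4p(n-p+1)|\omega|^2$.

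\textbf{Why the sign produces $C(p,n)$.} In an eigenbasis of $S_\alpha$ the weight factor is $(\sum_{i\in I}a_i-\sum_{j\in J}a_j)^2$.
\begin{itemize}
\item This factor vanishes on the diagonal blocks $I=J$ and is at most $2(p-r)\le 2p$ when $|I\cap J|=r$. Together with the total weight this gives the ratio $\frac{n-p+1}{2}$.
\item For $p=2$, the Bianchi identity lowers the per-entry bound from $4$ to $\frac83$ in \eqref{eq:key-weight}, which gives $\frac{3(n-1)}{4}$.
\item $C(1,n)$ comes from a separate estimate of $2R_{ij}\omega_{i,k}\omega_{j,k}+2\langle\mathring R\omega,\omega\rangle$ via \eqref{Ricci1}.
\end{itemize}
With the derivation action, the factor $(\mu_I+\mu_J)^2$ is instead largest on the diagonal blocks. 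The paper's own derivation-action computation (Section 4, keeping the Ricci terms) only reaches the much smaller constant $C(p,p,n)$ of Theorem B.

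\textbf{A minor point in Step 3.} $S^2_0$ generates $\mathfrak{sl}(n)$, not $\mathfrak{gl}(n)$. The cleaner argument is that the total weight is positive for $\omega\neq0$, so $C(p,n)$-positivity gives strict positivity directly.
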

	As an application, Theorem A improves several previous rigidity results. For example, taking the Weyl tensor on Riemannian manifolds with
	harmonic curvature, we obtain the following improvement of the result in \cite{DF24,FLD}.
	\begin{corollary}
		Let $(M, g)$ be an $n(\ge 4)$-dimensional complete Riemannian manifold with
		harmonic curvature. If the curvature operator of the second kind $\mathring{R}$ is $\frac{3(n-1)}{4}$-nonnegative, then $M$ must be of constant curvature.
	\end{corollary}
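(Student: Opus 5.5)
The plan is to apply Theorem~A with $p=2$ and $\omega=W$, the Weyl tensor, and then conclude constant curvature from the vanishing of $W$ combined with harmonic curvature. Harmonic curvature means $\delta \Rm=0$. By the second Bianchi identity this is equivalent to $\nabla \Ric$ being a Codazzi tensor. Contracting $\delta\Rm=0$ gives $d\,\scal=0$, so $\scal$ is constant. Hence the Schouten tensor is Codazzi and $\delta W=0$.

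The Weyl tensor is a symmetric $(2,2)$ double form that satisfies the first Bianchi identity and has vanishing first contraction. Since $\delta W=0$, the second Bianchi identity also gives $dW=0$; this follows from $DW=0$ for the exterior covariant derivative, because the Schouten tensor is Codazzi. So $W$ is a harmonic double form, and we get $\Delta_L W=0$ from the Weitzenb\"ock/Bochner formula for double forms that the paper establishes. The curvature condition $C(2,n)=\frac{3(n-1)}{4}$ is exactly our hypothesis.

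Theorem~A is stated for closed $M$, while the Corollary assumes only completeness. I would handle this in one of two ways. The first option is to observe that the proof of Theorem~A is pointwise: it shows $\langle \Ric_L(W),W\rangle\ge 0$ under $C(2,n)$-nonnegativity of $\mathring{R}$, with an algebraic rigidity case. The second option is to note the standard fact that $\frac{3(n-1)}{4}$-nonnegativity, with $\frac{3(n-1)}{4}<n+2$ and so in particular below $\frac{(n-1)(n+2)}{2}$, implies $\Ric\ge 0$ and then use a cutoff argument.

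Concretely, I would use the Bochner identity
\[
\tfrac12\Delta|W|^2=|\nabla W|^2+\tfrac12\langle \Ric_L(W),W\rangle\ge |\nabla W|^2 .
\]
Combining this with the refined Kato inequality for harmonic Weyl tensors shows that $|W|^{\alpha}$ is subharmonic for a suitable $\alpha<1$. If $\scal>0$ and $\Ric>0$ strictly somewhere, the splitting and Myers-type arguments reduce us to the compact case. Otherwise we use $L^2$-type Liouville theorems, or we pass to the universal cover and de Rham split off flat factors when $\nabla W=0$. In each case the conclusion is that $W$ is parallel.

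Next comes the rigidity step, which I expect to be the main obstacle. From $\nabla W=0$ we have equality in the Bochner formula, so $\langle \Ric_L(W),W\rangle=0$. I need to extract $W=0$ unless the curvature is flat. The plan is to follow the equality case in the proof of Theorem~A. Equality forces $\mathring{R}$ to have a specific eigenvalue structure on the traceless symmetric tensors generated by $W$. If $W\neq 0$ is parallel, the holonomy is reduced, so $M$ is locally symmetric or locally reducible. On a product with harmonic curvature, $W\ne 0$ combined with $\frac{3(n-1)}{4}$-nonnegativity contradicts the known fact that such products have negative second-kind eigenvalues along mixed directions. For irreducible symmetric spaces, I would instead use the Nienhaus--Petersen--Wink classification: their $\mathring{R}$ fails $C$-nonnegativity for $C<n+2$ unless they have constant curvature.

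So $W\equiv 0$. With harmonic curvature this makes $\Rm$ locally conformally flat with Codazzi Schouten tensor $A$. Since $\mathring{R}\ge 0$ in the averaged sense implies $\Ric\ge0$, the Bochner formula for the Codazzi tensor $\mathring{\Ric}$, which is harmonic because $\scal$ is constant, together with the same Theorem~A argument at $p=1$ shows that $\mathring{\Ric}$ is parallel. This step uses the fact that $\frac{3(n-1)}{4}\le C(1,n)$. A parallel traceless Ricci tensor on a conformally flat manifold forces either $\mathring{\Ric}=0$, which gives constant curvature, or a local product $\mathbb{S}^k\times \mathbb{H}^{n-k}$ or $\mathbb{S}^{n-1}\times\mathbb{R}$. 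The last of these has a negative second-kind eigenvalue violating the hypothesis, so $M$ has constant curvature.
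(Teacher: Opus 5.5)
\textbf{There is a genuine gap in the conformally flat branch for $n\ge 6$.} Your overall architecture matches the paper's: Theorem~A with $p=2$ applied to $W$, then a split into $W=0$ versus $W\neq 0$. In the $W=0$ branch, however, you apply the $p=1$ case of Theorem~A to $E=\mathring{\Ric}$ and assert $\frac{3(n-1)}{4}\le C(1,n)=\frac{n(n-1)(n+2)}{2(n^2-2)}$. That inequality is equivalent to $n^2-4n-6\le 0$, so it holds only for $n=4,5$. For example, $C(1,6)=\frac{60}{17}<\frac{15}{4}$, and asymptotically $C(1,n)\sim \frac n2$ while $\frac{3(n-1)}{4}\sim\frac{3n}{4}$. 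Since $k$-nonnegativity only implies $k'$-nonnegativity for $k'\ge k$, your hypothesis does not give $C(1,n)$-nonnegativity, and the $p=1$ weight estimate gives no sign.

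The missing idea is to use $W=0$ directly. Then $\Rm$ is determined by $\Ric$, and for the Codazzi, divergence-free $E$ the rough Laplacian satisfies
\[
\langle \Delta E,E\rangle=\frac{n}{n-2}\sum_i a_i^3-\frac{(2n-1)\scal}{(n-1)(n-2)}\sum_i a_i^2+\frac{\scal^3}{(n-1)(n-2)},
\]
where $a_i$ are the Ricci eigenvalues. This cubic is nonnegative once all $a_i\ge 0$ (the paper cites Lemma~3.3 of [FL]). The condition $\Ric\ge 0$ does follow from $\frac{3(n-1)}{4}$-nonnegativity. Hence $E$ is parallel, and the $\mathbb S^{n-1}\times\mathbb R$ alternative is removed by local irreducibility (Li's product theorem), as in the paper.

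\textbf{Three further steps need repair.}

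\emph{Passage from complete to closed.} As written it is not valid. $\Ric>0$ at one point gives neither Myers' theorem, which needs a uniform positive lower bound, nor a line for the splitting theorem. The Kato/subharmonicity route for $|W|^\alpha$ needs an integrability hypothesis you do not have. The pointwise inequality $\Delta|W|^2\ge 2|\nabla W|^2$ alone needs compactness to conclude anything. The paper's reduction is short. First, $\scal$ is constant. If $\scal=0$, then $\tr\mathring R=\frac{n+2}{2n}\scal=0$ together with $\frac{3(n-1)}{4}$-nonnegativity forces $\mathring R\equiv 0$, so $M$ is flat. If $\scal>0$, the pointwise pinching $\Ric\ge\frac{\scal}{n(n+1)}$ (Li, Prop.~4.1) and Myers give compactness.

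\emph{Holonomy in the $W\neq 0$ branch.} The claim that parallel $W$ makes $M$ locally symmetric or locally reducible skips the special-holonomy entries of Berger's list. Use Derdzi\'{n}ski--Roter instead: $\nabla W=0$ implies $W=0$ or $\nabla \Rm=0$.

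\emph{Irreducible symmetric spaces.} Your claim that they fail $C$-nonnegativity for every $C<n+2$ unless round is false. $\mathbb{CP}^2$ with holomorphic sectional curvature $4$ has $\mathring R$-eigenvalues $-2$ (multiplicity $3$) and $4$ (multiplicity $6$), so it is $\frac92$-nonnegative. You only need failure at $\frac{3(n-1)}{4}$. The paper gets this as follows: irreducible $\Rightarrow$ Einstein $\Rightarrow$ rational homology sphere (NPW, Theorem~B) $\Rightarrow$ $\mathbb S^n$ or $SU(3)/SO(3)$ by Wolf's classification, with the latter excluded by NPW's computation.

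Minor: harmonic curvature means $\Ric$, not $\nabla\Ric$, is Codazzi.
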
 
	More generally, we can improve the result in \cite{FL1} as follows:
	\begin{corollary}
		Let $(M, g)$ be an $n(\ge 4)$-dimensional complete Riemannian manifold with harmonic Weyl tensor. If the curvature operator of the second kind $\mathring{R}$ is $\frac{3(n-1)}{4}$-nonnegative and  the Weyl tensor of $(M, g)$ has bounded norm, then $M$ is either globally conformally equivalent to a space of positive constant curvature or is isometric to a flat manifold.
	\end{corollary}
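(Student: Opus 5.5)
The plan is to apply Theorem A with $p=2$ to the Weyl tensor $W$, viewed as a symmetric $(2,2)$ double form. Since $C(2,n)=\frac{3(n-1)}{4}$, the curvature hypothesis is exactly the one Theorem A requires in this case. $W$ satisfies the first Bianchi identity and is totally trace-free, so its first contraction vanishes. Harmonic Weyl tensor means $\delta W=0$. Combined with the second Bianchi identity for $W$ (valid when $\delta W=0$, because then the Cotton tensor vanishes and $dW=0$ in the double-form sense), this shows $W$ is harmonic for the Lichn\'{e}rowicz Laplacian on $(2,2)$ double forms. The Bochner formula behind Theorem A should read
\[
\tfrac12\Delta|W|^2=|\nabla W|^2+\mathcal{R}(W),
\]
with $\mathcal{R}(W)\ge 0$ when $\mathring{R}$ is $\frac{3(n-1)}{4}$-nonnegative. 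So on a closed manifold $W$ is parallel.

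Since $M$ is only complete, I would replace integration over $M$ with a cutoff or maximum-principle argument. First, $\frac{3(n-1)}{4}$-nonnegativity of $\mathring{R}$ implies nonnegative Ricci curvature; the earlier literature shows that $k$-nonnegative $\mathring{R}$ with $k<n+\frac{n+2}{n}$ gives $\Ric\ge0$, and I expect in fact $\Scal\ge0$ with more. The inequality above gives $\Delta|W|\ge 0$ away from zeros via Kato, so $|W|$ is subharmonic. Since $|W|$ is bounded by hypothesis, I would use Yau's result that bounded subharmonic functions... Yau's Liouville theorem actually applies to positive harmonic functions, so instead I would integrate $\phi^2|W|^{\alpha}$ against the inequality with $\phi$ a cutoff on geodesic balls, or invoke the Omori--Yau maximum principle. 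This yields $\nabla W=0$ together with $\mathcal{R}(W)=0$. This is the step I expect to be the main obstacle: boundedness alone does not give $L^2$ control, so the argument must exploit equality in the algebraic estimate and a strong maximum principle for $|W|^2$ on the set where it attains its supremum.

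Once $W$ is parallel and $\mathcal{R}(W)=0$, I would use the equality case of the algebraic estimate in Theorem A. Equality in the $\frac{3(n-1)}{4}$ estimate should force $W=0$ unless $\mathring{R}$ degenerates in a way incompatible with $\mathring{R}$ being $\frac{3(n-1)}{4}$-nonnegative and nonzero. Alternatively, parallel nonzero $W$ forces local symmetry with reducible or special holonomy, and such spaces violate the pinching. Hence $W\equiv0$, and $g$ is locally conformally flat.

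A locally conformally flat metric has harmonic Weyl tensor exactly when the Schouten tensor is Codazzi. With $\Ric\ge0$ and $\mathring{R}$ $\frac{3(n-1)}{4}$-nonnegative, the known classification of complete locally conformally flat manifolds with nonnegative Ricci curvature then applies. By Zhu, or Carron--Herzlich, $M$ is either flat, globally conformal to the round sphere or its quotient, or a quotient of $\mathbb{R}\times S^{n-1}$. In the last case, the product $\mathbb{R}\times S^{n-1}$ has $\mathring{R}$ that fails $\frac{3(n-1)}{4}$-nonnegativity, which a direct eigenvalue computation on traceless symmetric 2-tensors mixing the $\mathbb{R}$ and $S^{n-1}$ directions should confirm. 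That computation rules this case out and gives the conclusion.
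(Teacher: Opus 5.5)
Your overall strategy matches the paper's: apply Theorem A with $p=2$ to $W$, get $\nabla W=0$, then classify. However, two steps are asserted rather than proved, and the first cannot be done the way you suggest.

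\emph{Getting $\nabla W=0$ on a noncompact $M$.} From $\frac12\Delta|W|^2\ge|\nabla W|^2$ and $\sup|W|<\infty$ alone you cannot conclude $\nabla W\equiv 0$. On $\mathbb{R}^n$, $n\ge 3$, the function $-(1+|x|^2)^{(2-n)/2}$ is bounded, nonconstant and strictly subharmonic.
\begin{itemize}
\item The strong maximum principle needs the supremum to be attained.
\item Cutoff integration needs integrability that boundedness does not give, as you note.
\item Omori--Yau only yields $|\nabla W|(x_k)\to 0$ along a sequence.
\end{itemize}
What closes this in the paper is compactness. The hypothesis on $\mathring{R}$ gives not merely $\Ric\ge 0$ but the pinching $\Ric\ge\frac{s}{n(n+1)}g\ge 0$ (Li's Proposition~4.1). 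When $s$ is a positive constant, Myers' theorem gives compactness. In general the paper deduces compactness of a non-flat $M$ from this pinching via [LW, Cor.~1.1]. On a closed manifold the maximum principle then gives $\nabla W=0$.

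Compactness is also needed at the end. The Carron--Herzlich/Zhu classification has an alternative you omitted: $M$ globally conformal to $\mathbb{R}^n$ (complete rotationally symmetric metrics on $\mathbb{R}^n$ are conformally flat). That case is excluded only because $M$ is compact. Your exclusion of $\mathbb{R}\times S^{n-1}$ by computing $\mathring{R}$ is fine; the paper gets the same from Li's irreducibility theorem.

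\emph{Excluding a nonzero parallel Weyl tensor.} You do not carry out the equality-case analysis of the estimate behind Theorem A. You also do not justify the claim that locally symmetric spaces with reducible or special holonomy ``violate the pinching,'' and that claim is precisely the nontrivial content. The paper's argument runs as follows.
\begin{enumerate}
\item By Derdzi\'{n}ski--Roter, $\nabla W=0$ forces $M$ to be locally conformally flat or locally symmetric.
\item In the locally symmetric case $s$ is constant, so $M$ is flat or, by Myers, compact.
\item Li's Theorem~1.8 gives local irreducibility, hence $M$ is Einstein.
\item Theorem~B of Nienhaus--Petersen--Wink then makes $M$ a rational homology sphere.
\item Wolf's classification leaves only round spheres and $SU(3)/SO(3)$ in the simply connected case.
\item $SU(3)/SO(3)$ fails the curvature hypothesis by [NPW23, Example~4.5].
\end{enumerate}
You need this chain, or a genuine equality-case argument, to conclude $W\equiv 0$ before invoking the conformally flat classification.
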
 
	
	Second, we obtain the following vanishing theorem for general $(p,q)$ double forms. Set $$C(p,q,n)=\frac{\frac{3}{2}[n(p+q)-p^2-q^2]+3q}{A(p,q,n)},$$
	where \begin{align*}
		&A(p,q,n)\\
		&=\left\{ \begin{aligned}
			& p+q+2min\{p,q\}+\frac{(p+q)(n-3(p+q))}{n+2}+2q(p+1)\frac{n^2+n-4}{n^2+n-2}\quad p,q\le 2 \\ 
			& p+3q-\frac{(p+q)^2}{n+2}+\frac{(n-2(p+q))(2n+2+nq)}{n(n+2)}+2q(p+1)\frac{n^2+n-4}{n^2+n-2}\quad p\ge 3,q\le 2 \\ 
			&3p+q-\frac{(p+q)^2}{n+2}+\frac{(n-2(p+q))(2n+2+np)}{n(n+2)}+2q(p+1)\frac{n^2+n-4}{n^2+n-2}\quad  p\le 2,q\ge 3 \\ 
			& p+q+2\min\{p,q\}
			-\frac{(p+q)^2}{n+2}
			+\frac{4(n+1)(n-2p-2q)}{n(n+2)}
			+2q(p+1)\frac{n^2+n-4}{n^2+n-2}\quad p,q\ge 3.
		\end{aligned} \right.
	\end{align*}
	\begin{maintheorem}\label{pq}
		Let $(M, g)$ be a closed and connected Riemannian manifold of dimension $n \ge 3$ and
		let $\omega$ be a harmonic $(p, q)$ double form on $M$ with $1 \le p+q \le \frac{n}{2}$, satisfying the first
		Bianchi identity and its first contraction vanish. If the curvature operator of the second $\mathring{R}$ is $C(p,q,n)$-positive (resp. $C(p,q,n)$-nonnegative), then the double form $\omega $ vanishing (resp. $\omega $ must be parallel).
	\end{maintheorem}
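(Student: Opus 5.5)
The plan follows the Nienhaus--Petersen--Wink approach: write the curvature term of the Lichn\'{e}rowicz Laplacian on $(p,q)$ double forms through the curvature operator of the second kind $\mathring{R}$, acting on $S^2_0(T_xM)$, and then estimate it by the eigenvalues of $\mathring{R}$. View a $(p,q)$ double form $\omega$ as an element of $\Ext^p T^*M\otimes \Ext^q T^*M$. Let $\{S_i\}$ be an orthonormal eigenbasis of $S^2_0(T_xM)$ with $\mathring{R}(S_i)=\lambda_i S_i$. By the representation-theoretic identity of \cite{NPW23}, the Weitzenb\"{o}ck curvature term should take the form
\begin{equation*}
\langle \Ric_L(\omega),\omega\rangle=\sum_i \lambda_i\, |S_i\omega|^2 + (\text{a multiple of } \scal\,|\omega|^2),
\end{equation*}
where $S\omega$ is the natural derivation action of a symmetric endomorphism on each tensor slot. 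Here $\scal$ is itself a weighted trace of $\mathring{R}$, so it is again a combination of the $\lambda_i$. The first step is to derive this Bochner formula $\Delta\omega=\nabla^*\nabla\omega+\Ric_L(\omega)$, with all constants depending on $p,q,n$ made explicit.

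Next, I would compute the weights. The goal is an upper bound on each $|S\omega|^2$ for unit $S$, and an exact value of $\sum_i |S_i\omega|^2$; both are in terms of $|\omega|^2$. The exact total comes from a Casimir-type computation. Summing over an orthonormal basis of $S^2_0$ gives a quadratic expression in $|\omega|^2$, in the first contraction $c\omega$, and in the Bianchi sum $b\omega$. Since $c\omega=0$ and $b\omega=0$ by hypothesis, only the $|\omega|^2$ term survives. I expect this total to be $\tfrac{3}{2}[n(p+q)-p^2-q^2]+3q$ up to normalization, which is the numerator of $C(p,q,n)$.

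For the single-element bound, split $S$ into its actions on the $\Ext^p$ and $\Ext^q$ factors and bound each piece. The bound depends on whether $p$ or $q$ is at most $2$ or at least $3$, following the analysis in \cite{NPW23} for forms. For small degree one uses the sharper trace-free eigenvalue bound on $\Ext^p$, while for degree at least $3$ one uses the generic bound. The cross term involving both factors produces the $2q(p+1)\frac{n^2+n-4}{n^2+n-2}$ contribution. The assumption $p+q\le n/2$ is needed so that terms like $n-2(p+q)$ are nonnegative and the case-split estimates are valid. Together these should give $\sup_{|S|=1}|S\omega|^2\le A(p,q,n)\,|\omega|^2$.

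Finally, apply the standard lemma of \cite{NPW23}. If $\mathring{R}$ is $C$-nonnegative, $0\le\mu_i\le 1$ and $\sum\mu_i\ge C$, then $\sum\lambda_i\mu_i\ge 0$, with strict inequality under $C$-positivity. Apply it with $\mu_i=|S_i\omega|^2/(A|\omega|^2)$ and $C=C(p,q,n)$; the scalar curvature term is absorbed in the same way after rewriting it via $\mathring{R}$. This gives $\langle\Ric_L(\omega),\omega\rangle\ge 0$. Integrating $0=\langle\Delta\omega,\omega\rangle$ over closed $M$ yields $\nabla\omega=0$, and in the positive case also $\omega=0$.

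The main obstacle is the sharp pointwise bound $A(p,q,n)$, especially the mixed term. One must control $|S\omega|^2$ when $S$ acts on both factors simultaneously. I would try first to decompose $S$ by eigenvectors, reduce to diagonal $S$, and then apply Cauchy--Schwarz across the two factors, using the Bianchi condition to kill the leftover terms.
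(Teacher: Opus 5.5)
You have the paper's overall strategy (express $\Ric_L$ through $\mathring R$, apply the weight principle of \cite{NPW23}, integrate), but the identity you build on is false, and with it both weights are misidentified. The curvature term is not $\sum_i\lambda_i|S_i\omega|^2$ plus a multiple of $\scal|\omega|^2$. Already for $p$-forms ($q=0$, $2\le p<\frac n2$) there is a term $p\frac{n-2p}{n}R_{ij}\omega_{iI'}\omega_{jI'}$. It cannot be rewritten through $g(\mathring R(\omega^{S^2_0}),\omega^{S^2_0})$ and $\scal$, because $g(\Ric_L(\omega),\omega)$ and $g(\mathring R(\omega^{S^2_0}),\omega^{S^2_0})$ contain $R_{ij}\omega_{iI'}\omega_{jI'}$ and $R_{ijkl}\omega_{ijI''}\omega_{klI''}$ in non-proportional ratios. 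For $(p,q)$ double forms with $\boldsymbol{c}\omega=0$, the correct identity (Proposition~\ref{proposition1}) is
\begin{align*}
\frac32\, g(\Ric_L(\omega),\omega)={}&g(\mathring R(\omega^{S_0^2}),\omega^{S_0^2})+\frac{n-2(p+q)}{n}\Bigl(p\,R_{ij}\omega_{iI',J}\omega_{jI',J}+q\,R_{ij}\omega_{I,iJ'}\omega_{I,jJ'}\Bigr)\\
&+\frac{(p+q)^2}{n^2}\scal\,|\omega|^2+pq\sum_{I',J'}g\bigl(\mathring R(W^{I',J'}),W^{I',J'}\bigr),
\end{align*}
where $W^{I',J'}$ symmetrizes $\omega$ in one slot of each factor. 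Your plan is missing exactly the Ricci terms and this second $\mathring R$-quadratic term. The hypothesis $p+q\le\frac n2$ is what makes the Ricci coefficients nonnegative.

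Consequently, the two numbers you plan to extract from $\{S_i\omega\}$ are not the right ones. With $\boldsymbol{c}\omega=0$ and $\mathcal{B}\omega=0$,
$$\sum_i|S_i\omega|^2=|\omega^{S^2_0}|^2=\bigl[\tfrac{n+2}{2}(p+q)-\tfrac{n+2}{2n}(p+q)^2+(p+1)q\bigr]|\omega|^2,$$
and the sharp single-element bound is $|S\omega|^2\le\bigl(p+q+2\min\{p,q\}-\frac{(p+q)^2}{n}\bigr)|S|^2|\omega|^2$ (Proposition~\ref{weighting estimate}). The numerator $\frac32[n(p+q)-p^2-q^2]+3q$ of $C(p,q,n)$ is instead the sum of the total weights of all the terms above; equivalently, it is $\frac32$ times the round-sphere value $p(n-p)+q(n-q)+2q$. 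Likewise, $A(p,q,n)$ is the sum of all their highest weights.

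In particular, the $\le 2$ / $\ge 3$ case split does not come from $|S\omega|^2$; it comes from the Ricci terms. A factor of degree $\le 2$ is estimated with $\Ric\ge[\mathring R,\frac{n}{n+2},n-1]$. For degree $k\ge 3$ one uses the bound $[\mathring R,\frac{2(n+1)}{n+2},k(n-1)]$ on the sum of the $k$ lowest Ricci eigenvalues. This is better because $\sum_{I',J}\omega_{iI',J}\omega_{jI',J}$ has eigenvalues at most $\frac1k|\omega|^2$.

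Similarly, $2q(p+1)\frac{n^2+n-4}{n^2+n-2}$ is the highest weight of the separate $W$-term, whose total weight $pq(2+\frac2p)|\omega|^2$ is computed using the Bianchi identity. It is not a cross term inside $|S\omega|^2$. If you run your final step with only $\mu_i=|S_i\omega|^2/(A|\omega|^2)$, you will not reach $C(p,q,n)$. You need to bound each term by its own weighted sum and add them using Lemma~\ref{weight principle}.
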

	From Theorem~\ref{pq}, we recover Theorem A and C in \cite{NPW23} where we identify a $p$-form $\theta$ with $\omega= \theta \otimes 1$ a $(p, 0)$ double form.		
	
	It follows from \cite{Labbi} that any symmetric $(p,p)$ double form has an orthogonal decomposition $\omega=\sum_{i=0}^{p}g^i\omega_{p-i}$, where $g_i$ are the exterior powers of the metric $(1, 1)$ double form $g$ and the $\omega_k$ are $(k, k)$ double forms such that $\boldsymbol{c}\omega_k = 0$. Based on  Theorem~\ref{theorem-A}, we obtain the following rigidity result. Set
	$${C}'(j,n)=\left\{ \begin{aligned}
		& \frac{n(n-1)(n+2)}{2(n^2-2)}\quad for \quad j=1,2 \\ 
		& C(j,n)\quad for \quad j\ge 3 \\ 
	\end{aligned} \right.$$

	\begin{maintheorem}
		Let $(M, g)$ be a closed and connected Riemannian manifold of dimension $n \ge 3$ and
		let $\omega$ be a harmonic $(p, p)$ symmetric double form on $M$ with $1 \le p \le \frac{n}{2}$, satisfying the first
		Bianchi identity. Let $j$ be an integer such that $1 \le j \le p$. If the curvature operator of the second kind $\mathring{R}$ is
		$C'(j,n)$-positive then $\boldsymbol{c}^{p-j}\omega = \lambda g^j$ for some constant $\lambda$. In particular,\\
		a) If $\mathring{R}$ is $C'(p,n)$-positive then $\omega$ has constant sectional curvature.\\
		b) If $\mathring{R}$ is $\frac{n(n-1)(n+2)}{2(n^2-2)}$-positive then $\boldsymbol{c}^{p-1}\omega = \lambda g$ for some constant $\lambda$.
	\end{maintheorem}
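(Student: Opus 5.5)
The plan is to reduce Theorem C to Theorem A, applied to the trace-free pieces of $\omega$ in Labbi's orthogonal decomposition $\omega=\sum_{i=0}^{p}g^i\omega_{p-i}$ with $\boldsymbol{c}\omega_k=0$.

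First, I would show that each component $\omega_k$ is again a harmonic symmetric $(k,k)$ double form satisfying the first Bianchi identity. The decomposition is algebraic and built from $g$ and contractions, all of which are parallel. Multiplication by $g$ and the contraction $\boldsymbol{c}$ commute with $\nabla$ and $\nabla^*\nabla$. I expect them to commute with the curvature term of the Lichn\'{e}rowicz Laplacian as well: $g$ is parallel, so $Ric_L(g\,\theta)=g\,Ric_L(\theta)$, and $Ric_L$ is built from curvature acting as a derivation. Hence $\Delta(g^i\omega_{p-i})=g^i\Delta\omega_{p-i}$. By uniqueness of the orthogonal decomposition of $\Delta\omega=0$, each $\Delta\omega_k=0$. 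The Bianchi identity passes to the components because $g$ and $\boldsymbol{c}$ preserve the kernel of the Bianchi map. The components $\omega_k$ are explicit linear combinations of $g^{a}\boldsymbol{c}^{b}\omega$, via Labbi's inversion formulas.

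Next, fix $j$ and compute $\boldsymbol{c}^{p-j}\omega$. Contracting $g^i\omega_{p-i}$ gives $\boldsymbol{c}^{p-j}(g^i\omega_{p-i})$. This is a constant multiple of $g^{i-(p-j)}\omega_{p-i}$ when $i\ge p-j$, and it vanishes otherwise, since $\boldsymbol{c}\omega_k=0$ and Labbi's formula for $\boldsymbol{c}(g^r\theta)$ on trace-free $\theta$ holds. So $\boldsymbol{c}^{p-j}\omega=\sum_{k=0}^{j}c_k\, g^{j-k}\omega_k$ with nonzero constants $c_k$, using $p\le n/2$ so that no coefficient degenerates. It then suffices to show $\omega_k=0$ for $1\le k\le j$, and that $\omega_0$, a function, is constant.

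For $k\ge 1$, I would apply Theorem A to $\omega_k$: it is harmonic, symmetric, trace-free and satisfies Bianchi. The quantity $C(k,n)$ is a pinching constant, and $\alpha$-positivity implies $\beta$-positivity for $\beta\ge\alpha$. Therefore I need $C(k,n)\le C'(j,n)$ for all $k\le j$. For $j\ge3$, the constants $C(k,n)$ for $k\ge3$ decrease in $k$, so I must check that $C(j,n)=\frac{n-j+1}{2}$ dominates $C(1,n)$ and $C(2,n)$.

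This monotonicity check is the main obstacle. In fact it seems false as stated: $\frac{n-j+1}{2}<\frac{3(n-1)}{4}$. So I would instead use the structure of low-degree components. A trace-free $(1,1)$ form satisfying Bianchi is a trace-free symmetric 2-tensor; it is harmonic in the relevant sense, and it is killed by $\frac{n(n-1)(n+2)}{2(n^2-2)}$-positivity. This is why $C'$ uses that constant for $j=1,2$. I expect the authors to handle $k=1,2$ through the relation between $\omega_2$ and $\omega_1$ under harmonicity plus the second Bianchi identity, which is available since $\omega$ is harmonic. I would try to show that harmonicity and Bianchi force $\omega_2$ to be determined by $\omega_1$, or to have divergence-free structure, so the weaker constant suffices. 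For $j\ge3$ the hypothesis is stated as $C(j,n)$, and I would verify, or cite, that it implies the needed low-degree vanishing.

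Finally, $\omega_0$ is a harmonic function on a closed connected manifold, hence constant. This gives $\boldsymbol{c}^{p-j}\omega=c_0\omega_0 g^j=\lambda g^j$. Part a) is the case $j=p$, where $\omega=\lambda g^p$ means constant sectional curvature. Part b) is the case $j=1$.
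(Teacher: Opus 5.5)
Your overall architecture matches the paper's: Labbi's decomposition $\omega=\sum_i g^i\omega_{p-i}$, then the components $\omega_k$ inherit symmetry, the first Bianchi identity and harmonicity (your commutation argument is fine; the paper cites Kulkarni and Labbi). Theorem~A is then applied to each $\omega_k$ with $1\le k\le j$, and Labbi's Proposition~2.2 converts the vanishing of $\omega_1,\dots,\omega_j$ into $\boldsymbol{c}^{p-j}\omega=\lambda g^j$.

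The gap is in the one step that carries real content. You never establish that $C'(j,n)$-positivity kills every $\omega_k$ with $k\le j$, and the obstruction you report comes from applying the monotonicity backwards. Since $\alpha$-positivity implies $\beta$-positivity for $\beta\ge\alpha$, you need $C'(j,n)\le C(k,n)$ for all $1\le k\le j$, not $C(k,n)\le C'(j,n)$. With the correct direction, the case $j\ge 3$ is immediate:
\begin{itemize}
\item $\frac{n-j+1}{2}\le\frac{n-k+1}{2}=C(k,n)$ for $3\le k\le j$;
\item $\frac{n-j+1}{2}\le\frac{n-2}{2}<\frac{3(n-1)}{4}=C(2,n)$;
\item $\frac{n-j+1}{2}<\frac n2<C(1,n)$.
\end{itemize}
The inequality you cite as a failure is exactly what makes the argument work. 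Your proposed workaround is therefore unnecessary, and it is also unjustified. It relies on a second Bianchi identity supposedly supplied by harmonicity, but here harmonic means $\Delta_L\omega=0$, which does not give it; Theorem~D assumes it separately. It also relies on showing that $\omega_2$ is determined by $\omega_1$, for which you give no argument. As written, your proposal leaves the key step open.

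The real constant-comparison subtlety is at $j=2$, not $j\ge3$. There you need $C(1,n)\le C(2,n)$. This is equivalent to $\frac{2n(n+2)}{3(n^2-2)}\le 1$, i.e.\ $n^2-4n-6\ge 0$, which holds iff $n\ge 6$. In dimensions $n=4,5$, where $j=2$ is allowed, one has $C(1,n)>C(2,n)$ (for example $C(1,4)=\frac{18}{7}>\frac94=C(2,4)$). So $C'(2,n)$-positivity does not yield $C(2,n)$-positivity, and Theorem~A cannot be applied to $\omega_2$. The paper's proof passes over this silently. In those two dimensions the componentwise argument only works with $C'(2,n)$ replaced by $\min\{C(1,n),C(2,n)\}=C(2,n)$. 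This is the point you should have flagged, rather than the $j\ge3$ case.
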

	It follows from \cite{Labbi3} that the Gauss–Kronecker tensor $R^k$ of $2k$-Thorpe manifolds and $2k$-Anti-Thorpe manifolds is harmonic, and hence the above theorem  applies to them.
	
	\begin{maintheorem}
		Let $(M, g)$ be a closed and connected Riemannian manifold of dimension $n \ge 3$. Let
		$\omega $ be a harmonic $(p, p)$ symmetric double form on $M$ with $2 \le p \le \frac{n}{2}$, satisfying the first and second Bianchi identities such that its first contraction satisfies  $\boldsymbol{c}\omega =\lambda g^{p-1}$ for some function $\lambda$ on $M$. Suppose the curvature operator of the second kind $\mathring{R}$ is $C(p,n)$-positive (resp. $C(p,n)$-nonnegative), then the double form $\omega$ has constant sectional curvature (resp. $\omega $ must be parallel).
	\end{maintheorem}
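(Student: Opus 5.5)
The plan is to reduce to Theorem~\ref{theorem-A}, applied to the trace-free part of $\omega$, and to use the second Bianchi identity to see that the function $\lambda$ is constant. I would use Labbi's orthogonal decomposition $\omega=\sum_{i=0}^{p}g^i\omega_{p-i}$ with $\boldsymbol{c}\omega_k=0$. The hypothesis $\boldsymbol{c}\omega=\lambda g^{p-1}$ says that all the lower components $\omega_{p-i}$ with $1\le i<p$ vanish: contracting a term $g^i\omega_{p-i}$ with $i\ge 1$ produces a nonzero multiple of $g^{i-1}\omega_{p-i}$ plus $g^i\boldsymbol{c}\omega_{p-i}=0$. Hence
\[
\omega=\omega_p+\mu\, g^p, \qquad \boldsymbol{c}\omega_p=0,
\]
where $\mu$ is a function on $M$ proportional to $\lambda$. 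The proportionality constant comes from the standard formula for $\boldsymbol{c}(g^p)$ as a multiple of $g^{p-1}$, with coefficient depending on $n,p$.

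Next I would show that $\mu$ is constant. Since $\omega$ satisfies the second Bianchi identity, the contracted (Schur-type) second Bianchi identity relates the divergence of $\boldsymbol{c}\omega$ to the exterior derivative of the full contraction $\boldsymbol{c}^{p}\omega$. This is the analogue of $\delta\Ric=-\tfrac12 d\,\mathrm{scal}$. Substituting $\boldsymbol{c}\omega=\lambda g^{p-1}$ makes both sides multiples of $d\lambda\wedge g^{p-2}$ (or its contraction), with different coefficients depending on $n,p$. Because $p\ge 2$ and $p\le n/2$, these coefficients differ, so $d\lambda=0$. This is the Schur lemma for double forms, and it is where $p\ge2$ is used. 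It follows that $\mu$ is a constant, so $g^p\mu$ is parallel, harmonic, and satisfies the first Bianchi identity. The component $\omega_p=\omega-\mu g^p$ is therefore a harmonic symmetric $(p,p)$ double form satisfying the first Bianchi identity with $\boldsymbol{c}\omega_p=0$.

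Now Theorem~\ref{theorem-A} applies to $\omega_p$. Under $C(p,n)$-positivity it gives $\omega_p=0$, so $\omega=\mu g^p$, which is exactly constant sectional curvature for $\omega$. Under $C(p,n)$-nonnegativity it gives that $\omega_p$ is parallel, hence $\omega=\omega_p+\mu g^p$ is parallel.

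I expect the main obstacle to be the Schur step. One must track the exact contraction and divergence identities for double forms carefully enough to check that the two coefficients multiplying $d\lambda$ genuinely differ when $2\le p\le n/2$. The first step I would try is to contract the second Bianchi identity $D\omega=0$ (with $D$ the exterior covariant derivative in the first factor) $2p-1$ times. This yields $\delta(\boldsymbol{c}^{p-1}\omega)=c_{n,p}\, d(\boldsymbol{c}^p\omega)$ for an explicit constant $c_{n,p}$. I would then evaluate both sides on $\lambda g^{p-1}$ and compare.
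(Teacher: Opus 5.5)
Your proposal is correct and follows the paper's route. You decompose $\omega=\omega_p+\frac{\lambda}{p(n-p+1)}g^p$, note that $\omega_p$ is a harmonic, trace-free, symmetric double form satisfying the first Bianchi identity, and apply Theorem~A. The only difference is that you prove $\lambda$ is constant yourself, via the contracted second Bianchi identity $p\,\Div(\boldsymbol{c}^{p-1}\omega)=d(\boldsymbol{c}^{p}\omega)$, which does yield $(n-p)\,d\lambda=0$; the paper instead just cites Labbi for the harmonicity of $\omega_p$.
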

	As described in \cite{Labbi2}, a Riemannian manifold is called hyper $2k$-Einstein if its Riemann curvature tensor, viewed as a $(2,2)$ double form, satisfies $\boldsymbol{c} R^k = \lambda g^{2k-1}$. We have a corollary analogous to that in \cite{Labbi3}.
	\begin{corollary}
		Let $(M, g)$ be a closed and connected hyper $2k$-Einstein Riemannian $n$-manifold with
		$2 \le 2k \le \frac{n}{2}$. If $\mathring{R}$ is $p$-positive (resp. $p$-nonnegative) for $1 \le p \le C(2k,n)$, then $M$ has constant $(2k)$-th Thorpe sectional curvature, that is $R^k = constant.g^{2k}$ (resp. $R^k$ must be parallel).
	\end{corollary}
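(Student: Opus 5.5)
The plan is to apply Theorem D, the preceding statement, to the $(2k,2k)$ double form $\omega=R^k$, the $k$-th exterior power of the Riemann curvature tensor in Labbi's double form algebra. So the work is to check the hypotheses of Theorem D one at a time.

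\textbf{Step 1: type and algebraic identities.} Since $R$ is a symmetric $(2,2)$ double form, $R^k$ is a symmetric $(2k,2k)$ double form. By hypothesis $2\le 2k\le \frac{n}{2}$, which is exactly the range $2\le p\le \frac{n}{2}$ of Theorem D with $p=2k$. The first Bianchi identity holds for $R^k$ because, by Labbi's results (\cite{Labbi}), the product of double forms satisfying the first Bianchi identity again satisfies it.

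\textbf{Step 2: second Bianchi identity and harmonicity.} Write $D$ for the exterior derivative on the first factor and $\tilde D$ for the one on the second. Since $D$ acts as an antiderivation and $DR=0$, we get
\[
DR^k=kR^{k-1}DR=0,
\]
and similarly $\tilde D R^k=0$. Closedness of a symmetric double form under the second Bianchi identity forces co-closedness of its adjoint via the contraction identities, so I would invoke the fact, attributed above to \cite{Labbi3}, that Gauss--Kronecker tensors are harmonic. Concretely, the second Bianchi identity together with $\delta R^k$ being expressible through $D\boldsymbol{c}R^k$-type terms gives $\Delta R^k=0$.

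\textbf{Step 3: the contraction condition.} The hyper $2k$-Einstein hypothesis is exactly
\[
\boldsymbol{c}R^k=\lambda g^{2k-1},
\]
which is the remaining hypothesis of Theorem D. This is where I expect the main subtlety to sit. Harmonicity of $R^k$ in general needs the divergence of $R^k$ to vanish, and that divergence is essentially $\tilde D\boldsymbol{c}R^k$, up to the Bianchi identities. With $\boldsymbol{c}R^k=\lambda g^{2k-1}$ this reduces to $d\lambda\wedge g^{2k-1}$. This vanishes because a Schur-type lemma forces $\lambda$ to be constant. That lemma comes from contracting the second Bianchi identity, which is valid since $2k-1\le n-2$ given $2k\le n/2$. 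If that argument is delicate, I note that Theorem D is stated for a function $\lambda$, so its own proof presumably handles this point, and I would rely on it.

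\textbf{Step 4: the curvature condition and conclusion.} If $\mathring{R}$ is $p$-positive for some $p\le C(2k,n)$, then it is $C(2k,n)$-positive, since $p$-positivity implies $p'$-positivity for every $p'\ge p$ (averages of more eigenvalues). The same holds for nonnegativity. Theorem D then gives that $R^k$ has constant sectional curvature, i.e. $R^k=\mathrm{const}\cdot g^{2k}$, in the positive case, and that $R^k$ is parallel in the nonnegative case.
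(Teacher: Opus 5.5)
Your proposal is correct and follows the paper's intended route: apply Theorem D to $\omega=R^k$ with $p=2k$, using that $R^k$ is a symmetric $(2k,2k)$ double form inheriting both Bianchi identities, that $R^k$ is harmonic, and that $p$-positivity for some $p\le C(2k,n)$ implies $C(2k,n)$-positivity. One caution: harmonicity is a hypothesis of Theorem D, so your fallback on its proof does not help, and Gauss--Kronecker tensors are not harmonic in general (the paper cites this only for $2k$-Thorpe and anti-Thorpe manifolds), so the Schur step is what establishes harmonicity. That step does hold: applying $\boldsymbol{c}^{2k-1}$ to $\delta R^k=-\tilde D(\boldsymbol{c}R^k)=-\tilde D(\lambda g^{2k-1})$ gives a positive multiple of $(n-2k)\,d\lambda=0$, and $2k<n$.
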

	
	\section{Preliminaries}
	In this section, we recall some basic notions and identities concerning
	double forms, the first Bianchi sum, and the curvature operator of the
	second kind. For further details on the curvature operator of the second kind, we refer to
	\cite{Li22,Li23a,NPW23}; see also
	\cite{Kulkarni,Labbi,Labbi1} for a comprehensive treatment of double forms.
	
	\subsection{Double Forms and the First Bianchi Sum}
	
	Let $(V,g)$ be an $n$-dimensional Euclidean vector space. Denote by
	$\Lambda V^*$ the exterior algebra of the dual space $V^*$, namely,
	\[
	\Lambda V^*
	=
	\bigoplus_{p=0}^n \Lambda^p V^*.
	\]
	The space of \emph{double forms} on $V$ is defined by
	\[
	DV^*
	:=
	\Lambda V^*\otimes \Lambda V^*
	=
	\bigoplus_{p,q=0}^n D^{p,q}V^*,
	\qquad
	D^{p,q}V^*
	:=
	\Lambda^pV^*\otimes\Lambda^qV^*.
	\]
	Thus, a $(p,q)$ double form is a covariant tensor which is alternating
	in its first $p$ arguments and separately alternating in its last
	$q$ arguments.
	
	The space $DV^*$ carries a natural bi-graded associative algebra
	structure, obtained by taking the exterior product in each factor.
	More precisely, if
	\[
	\omega_1=\theta_1\otimes\theta_2\in D^{p,q}V^*,
	\qquad
	\omega_2=\theta_3\otimes\theta_4\in D^{r,s}V^*,
	\]
	then their product is defined by
	\[
	\omega_1\cdot\omega_2
	:=
	(\theta_1\wedge\theta_3)
	\otimes
	(\theta_2\wedge\theta_4)
	\in D^{p+r,q+s}V^*.
	\]
	We refer to this product as the \emph{exterior product of double forms}.
	The dot ``$\cdot$'' will be used to distinguish this product from the
	ordinary tensor product.
	
	Fix an orthonormal basis $\{e_i\}_{i=1}^n$ of $V$, and let
	$\{e^i\}_{i=1}^n$ be the corresponding dual basis of $V^*$. 
	With respect to this convention, the collection $\left\{ e^{i_1}\wedge\cdots\wedge e^{i_k}: 1\leq i_1<\cdots<i_k\leq n\right\}$
	forms an orthogonal basis of $\Lambda^kV^*$. Consequently,
	\[
	\left\{
	(e^{i_1}\wedge\cdots\wedge e^{i_p})
	\otimes
	(e^{j_1}\wedge\cdots\wedge e^{j_q})
	:
	\begin{array}{l}
		i_1<\cdots<i_p,\\
		j_1<\cdots<j_q
	\end{array}
	\right\}
	\]
	is an orthogonal basis of $D^{p,q}V^*$.	
	Accordingly, every $\omega\in D^{p,q}V^*$ can be written uniquely as
	\[
	\omega
	=
	\sum_{\substack{
			i_1<\cdots<i_p\\
			j_1<\cdots<j_q
	}}
	\omega_{i_1\ldots i_p,j_1\ldots j_q}\,
	e^{i_1}\wedge\cdots\wedge e^{i_p}
	\otimes
	e^{j_1}\wedge\cdots\wedge e^{j_q}.
	\]
	
	We regard
	$\Lambda^p V^*$ as the subspace of $\otimes^p V^*$ consisting of
	alternating tensors and equip $\otimes^{p+q}V^*$ with the standard
	tensor inner product induced by the metric $g$. This induces an inner
	product, and hence a norm, on $D^{p,q}V^*$. Therefore
	\[
	|\omega|^2=\sum_{i_1,\ldots,i_p,j_1,\ldots,j_q}\omega_{i_1\ldots i_p,j_1\ldots j_q}^{\,2}=p!\,q!\sum_{\substack{
			i_1<\cdots<i_p\\
			j_1<\cdots<j_q
	}}
	\omega_{i_1\ldots i_p,j_1\ldots j_q}^{\,2}.
	\]
	Throughout the paper, we use this tensor norm convention for double
	forms.
	
	For $\alpha\in V^*$, the \emph{interior product}
	$\iota_\alpha:\Lambda^kV^*\longrightarrow\Lambda^{k-1}V^*,$ defined by
	$$\iota_\alpha(\omega)(X_1,X_2,...,X_{p-1})=\omega(\alpha, X_1,X_2,...,X_{p-1}).$$	
	The same construction extends naturally to double forms. Namely, for $\alpha=e^i\otimes e^j,$
	if $\omega=\omega_1\otimes\omega_2$
	are decomposable double forms, then 
	\[
	\iota_{\alpha}
	(\omega)
	=
	(\iota_{e^i}\omega_1)
	\otimes
	(\iota_{e^j}\omega_2).
	\]
	The first contraction map $\boldsymbol{c}$ of double forms is defined as
	\[
	\boldsymbol{c}\omega=\iota_g(\omega)
	\]
	for metric $g$ seen as a $(1, 1)$-double form. 
	
	In terms of components, if
	$\omega\in D^{p,q}V^*$, and $p=0$ or $q=0$, set $c\omega=0$. If both $p,q> 1$ then 
	\[(\boldsymbol{c}\omega)_{i_1\ldots i_{p-1},j_1\ldots j_{q-1}}=\sum_{i=1}^{n}{\omega_{ii_1\ldots i_{p-1},ij_1\ldots j_{q-1}}}\]
	
	We next recall the \emph{first Bianchi sum}. Let
	$\omega=\omega_1\otimes\omega_2\in D^{p,q}V^*$ be a decomposable
	double form. Its first Bianchi sum is defined by
	\[
	\mathcal{B}\omega
	:=
	\sum_{i=1}^n
	e^i\wedge\omega_1
	\otimes
	\iota_{e_i}\omega_2
	\in D^{p+1,q-1}V^*.
	\]
	The operator $\mathcal{B}$ is then extended to arbitrary double forms by linearity.
	Thus,
	\[
	\mathcal{B}:D^{p,q}V^*\longrightarrow D^{p+1,q-1}V^*.
	\]
	We say that a double form $\omega$ satisfies the \emph{first Bianchi identity} if $\mathcal{B}\omega=0.$
	
	In terms of components, if
	$\omega\in D^{p,q}V^*$ and $p=0$ or $q=0$, set $\mathcal{B}\omega=0$. If both $p,q> 1$ then  then
	\begin{align}\label{Bianchi1}
		(\mathcal{B}\omega)_{i_1\ldots i_{p+1},j_1\ldots j_{q-1}}
		&=
		\sum_{\alpha=1}^{p+1}
		(-1)^{\alpha-1}
		\omega_{
			i_1\ldots\widehat{i_\alpha}\ldots i_{p+1},
			\,i_\alpha j_1\ldots j_{q-1}
		}.
	\end{align}
	Under the tensor norm, we have
	\begin{align}\label{Bianchi}
		\left| \mathcal{B}\omega  \right|^2=(p+1)\left| \omega  \right|^2-p(p+1)\sum\limits_{i,j,i_2\ldots i_p,j_2\ldots j_q }{{\omega }_{ii_2\ldots i_p,jj_2\ldots j_q}{\omega }_{ji_2\ldots i_p,ij_2\ldots j_q}}.
	\end{align}

	\subsection{Curvature operator of the second kind}
	
	In a Riemannian manifold $(M^n,g)$, the Riemannian curvature tensor $R$  induces two symmetric linear operators $\hat{R}$ and $\mathring{R}$. Denote by $V$  the tangent space of this Riemannian manifold, by ${\Lambda }^2V$  the space of skew symmetric $2$-tensor over $V$, and by $S_0^2(V)$  the space of trace-free symmetric $2$-tensor over $V$. The curvature operator of the first kind is defined by
	\begin{align*}
		& \hat{R}:{\Lambda }^2V\to {\Lambda }^2V \\ 
		& \hat{R}(e_i\wedge e_j)=\frac{1}{2}\sum\limits_{k,l}{R_{ijkl}e_k\wedge e_l},
	\end{align*}
	and the curvature operator of the second kind is defined in \cite{CGT23, NPW23} by
	$$\mathring{R}={\Pr}_{s_0^2(V)}\circ \left.\bar{R} \right|_{S_0^2(V)},$$
	where the operator $\bar{R}$ act on $S^2(V)$ induced from Riemannian curvature tensor $R$ is defined by
	$$\bar{R}(e_i\odot e_j)=\sum\limits_{k,l}{R_{kijl}e_k\odot e_l},$$
	where $e_i\odot e_j=e_i\otimes e_j+e_j\otimes e_i$. 
	The inner product on $S^2(V)$ is defined by
	\[\left\langle A, B \right\rangle= tr(A^T B),\]
	Then $\frac{1}{\sqrt{2}}\{e_i\odot e_j\}_{1\le i<j\le n},\frac{1}{2}\{e_i\odot e_i\}_{i=1,\dots,n}$ is an orthonormal basis for $S^2(V)$. The operator $\mathring{R}$ can be regarded as the curvature operator $\bar{R}$ and its image restricts on $S_0^2(V)$. 
	
	By identifying symmetric $(0,2)$-tensors with self-adjoint endomorphisms of $V$, Nienhaus, Petersen and Wink give the following definition in \cite{NPW23}.
	\begin{definition}
		Let $T^{(0,k)}(V)$ denote the space of $(0,k)$-tensor on $V$. For $S\in S^2(V)$ and $T\in T^{(0,k)}(V),$ we define
		\begin{align*}
			& S:T^{(0,k)}(V)\to T^{(0,k)}(V) \\ 
			& \left(ST\right)(X_1,\cdots ,X_k)=\sum\limits_{i=1}^{k}{T(X_1,\cdots ,SX_i,\cdots ,X_k)} 
		\end{align*}
		and define $T^{S^2}\in T^{(0,k)}(V)\otimes S^2(V)$ by
		$$\left\langle T^{S^2}(X_1,\cdots ,X_k),S \right\rangle =\left(ST\right)(X_1,\cdots ,X_k).$$
	\end{definition} 
	Hence, if $\left\{ S^\alpha \right\}$ is an orthonormal basis for $S_0^2(V)$, then
	\[T^{S_0^2}=\sum\limits_{\alpha=1}^{N}{S^\alpha T\otimes S^\alpha},\]
	where $N=dim(S_0^2(V))=\frac{(n-1)(n+2)}{2}$.
	
	Let $\{{\lambda }_\alpha\}$ denote the eigenvalues of the operator $\mathring{R}\,$  and $\left\{ S^\alpha \right\}$ be the corresponding orthonormal eigenbasis of $S_{0}^2(V)$. In what follows, the eigenvalues $\{{\lambda}_\alpha\}$ are assumed to be arranged in non‑decreasing order. Define the action of $\mathring{R}\,$ on the space $T^{S_{0}^{2}}$ as
	\[\mathring{R}\,(T^{S_{0}^{2}})=\sum\limits_{\alpha}{S^\alpha T\otimes \mathring{R}\,(S^{\alpha}),}\]
	then
	\[\left\langle \mathring{R}\,(T^{S_{0}^{2}}),T^{S_{0}^{2}} \right\rangle =\sum\limits_{\alpha}{{\lambda }_\alpha\left| S^{\alpha}T \right|^2}.\]
	\subsection{Weight principle}
	To analyze the individual terms of Bochner formula, we apply the weighted-sum calculus developed in \cite{NPW23}, which allows us to estimate finite weighted sums with nonnegative weights.
	\begin{definition}(\cite[Definition 3.1]{NPW23})
		Let $\{\omega_{\alpha}\}_{\alpha=1}^{N}$ be the nonnegative weights of any finite weighted sums. Define
		\[
		\Omega=\max_{1\leq \alpha\leq N}\omega_{\alpha} \quad \mathrm{and} \quad  S=\sum_{\alpha=1}^{N}\omega_{\alpha}\,.
		\]
		We call $S$ the total weight and $\Omega$ the highest weight.
	\end{definition}
	Let $\{\lambda_{\alpha}\}_{\alpha=1}^{N}$ be the eigenvalues of $\mathring{R}\,$. Following the notation in \cite{NPW23}, we write $[\mathring{R},\Omega,S]$ to denote any finite weighted sums $\sum_{\alpha=1}^{N}\omega_{\alpha}\lambda_{\alpha}$ in terms of $\{\lambda_{\alpha}\}_{\alpha=1}^{N}$ whose weights satisfy highest weight $\Omega$ and total weight $S$.
	In particular, if $F(R)$ is a (geometric) quantity depending on curvature tensor $R$ and $\mathring{R}$ is curvature operator of the second kind, then we will write
	$$F(R) \ge \left[\mathring{R},\Omega, S\right]$$
	provided $F(R)$ is bounded from below by a weighted sum in terms of the eigenvalues of $\mathring{R}$ with highest weight $\Omega$ and total weight $S$.
	\begin{lemma}\label{Lem 3.4}(\cite[Lemma 3.4]{NPW23})
		Let  $\{\lambda_{\alpha}\}_{\alpha=1}^{N}$ denote the ordered eigenvalues of $\mathring{R}$. Then, for any integer $1\leq m\leq N$, we have
		$$\left[\mathring{R},\Omega,S\right]\geq\left(S-m\Omega\right)\lambda_{m+1}+\Omega\sum_{\alpha=1}^{m}\lambda_{\alpha}.$$
		Therefore, $\left[\mathring{R},\Omega,S\right]\ge 0$ if $\mathring{R}$ is $\frac{S}{\Omega}$-nonnegative.
	\end{lemma}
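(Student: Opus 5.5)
The weighted sum $\sum_\alpha \omega_\alpha\lambda_\alpha$ has weights $0\le\omega_\alpha\le\Omega$ with $\sum_\alpha\omega_\alpha=S$, and the eigenvalues are ordered so that $\lambda_1\le\lambda_2\le\cdots\le\lambda_N$. The plan is a rearrangement (bathtub) argument: among all admissible weight distributions, the sum is smallest when the largest possible weight $\Omega$ is put on the smallest eigenvalues. Concretely, I would split the sum at the index $m$:
\[
\sum_{\alpha=1}^N\omega_\alpha\lambda_\alpha-\Omega\sum_{\alpha=1}^m\lambda_\alpha-\Big(S-m\Omega\Big)\lambda_{m+1}
=\sum_{\alpha=1}^m(\omega_\alpha-\Omega)\lambda_\alpha+\sum_{\alpha=m+1}^N\omega_\alpha\lambda_\alpha-\Big(S-m\Omega\Big)\lambda_{m+1}.
\]

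The first step bounds each piece against $\lambda_{m+1}$. For $\alpha\le m$ we have $\omega_\alpha-\Omega\le0$ and $\lambda_\alpha\le\lambda_{m+1}$, hence $(\omega_\alpha-\Omega)\lambda_\alpha\ge(\omega_\alpha-\Omega)\lambda_{m+1}$. For $\alpha\ge m+1$ we have $\omega_\alpha\ge0$ and $\lambda_\alpha\ge\lambda_{m+1}$, hence $\omega_\alpha\lambda_\alpha\ge\omega_\alpha\lambda_{m+1}$.

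The second step adds these bounds. The total coefficient of $\lambda_{m+1}$ is
\[
\sum_{\alpha\le m}(\omega_\alpha-\Omega)+\sum_{\alpha>m}\omega_\alpha-(S-m\Omega)=S-m\Omega-(S-m\Omega)=0.
\]
So the difference above is nonnegative, which is exactly the claimed inequality. When $m=N$ the term $\lambda_{m+1}$ is absent. In that case I would use $S\le N\Omega$ and the same first estimate, which gives $\sum_\alpha\omega_\alpha\lambda_\alpha\ge\Omega\sum_{\alpha}\lambda_\alpha$ plus a remainder handled by the same sign argument. The cleanest route is to restrict to $m<N$ or to read the convention as the bound with the $\lambda_{m+1}$ term dropped.

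The main subtlety is the consequence, that $\frac{S}{\Omega}$-nonnegativity gives $[\mathring R,\Omega,S]\ge0$, because $S/\Omega$ need not be an integer. I would take $m=\lfloor S/\Omega\rfloor$ and set $t=S/\Omega-m\in[0,1)$. Then the bound reads
\[
\Omega\Big(\lambda_1+\cdots+\lambda_m+t\lambda_{m+1}\Big).
\]
This is $\Omega$ times the defining expression for $\frac{S}{\Omega}$-nonnegativity, under the standard convention for a non-integer $k$, namely $\lambda_1+\cdots+\lambda_{\lfloor k\rfloor}+(k-\lfloor k\rfloor)\lambda_{\lfloor k\rfloor+1}\ge0$. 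It is therefore nonnegative. If $\Omega=0$, all the weights vanish and the statement is trivial.
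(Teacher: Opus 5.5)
Your main argument is correct. For every $m\le N-1$, the termwise comparisons with $\lambda_{m+1}$ and the vanishing of the total coefficient of $\lambda_{m+1}$ give the inequality. Choosing $m=\lfloor S/\Omega\rfloor$, which is at least $1$ because $S\ge\Omega$, then gives the consequence under the standard convention for non-integer $k$-nonnegativity. This is essentially the rearrangement argument behind \cite[Lemma 3.4]{NPW23}; the paper only cites that lemma and gives no proof of its own.

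One correction concerns $m=N$. The reading ``drop the $\lambda_{N+1}$ term'' is false in general. Take $\mathring{R}=\mathrm{id}$ and a single nonzero weight $\Omega=S=1$: the weighted sum is $1$, but $\Omega\sum_{\alpha}\lambda_\alpha=N$, so the dropped-term bound would assert $1\ge N$. The same example shows that your intermediate claim $\sum_\alpha\omega_\alpha\lambda_\alpha\ge\Omega\sum_\alpha\lambda_\alpha$ also fails.

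The correct fallback is the convention $\lambda_{N+1}:=\lambda_N$. Your sign argument then shows that the difference equals $\sum_\alpha(\Omega-\omega_\alpha)(\lambda_N-\lambda_\alpha)\ge 0$. Alternatively, restrict to $m\le N-1$. If $\lfloor S/\Omega\rfloor=N$, then all weights equal $\Omega$, and you can use $m=N-1$, $t=1$ in the consequence.
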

	
	The following lemma will be useful in our proof.
	\begin{lemma}\label{weight principle}(\cite[Lemma 3.3]{NPW23})
		Let $\left[\mathring{R}, \Omega, S\right], \left[\mathring{R}, \widetilde{\Omega}, \widetilde{S}\right]$ denote weighted sums of eigenvalues of $\mathring{R}$ with highest weights $\Omega, \widetilde{\Omega}$ and total weights $S, \widetilde{S}$, respectively.
		\begin{enumerate}
			\item If $c>0$, then 
			\begin{align*}
				\left[\mathring{R}, c \Omega, c S\right] = c \cdot \left[\mathring{R}, \Omega, S\right].
			\end{align*}
			\item If $\Omega \leq \widetilde{\Omega},$ then 
			\begin{align*}
				\left[ \mathring{R} , \Omega, S\right] \geq \left[ \mathring{R} , \widetilde{\Omega}, S\right].
			\end{align*}
			\item \begin{align*}
				\left[\mathring{R}, \Omega, S\right] + \left[\mathring{R}, \widetilde{\Omega}, \widetilde{S}\right] \geq \left[\mathring{R}, \Omega + \widetilde{\Omega}, S + \widetilde{S}\right].
			\end{align*}
		\end{enumerate}
	\end{lemma}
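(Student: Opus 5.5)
The plan is to make precise what the inequalities between the symbols $[\mathring{R},\Omega,S]$ mean, and then reduce everything to elementary facts about nonnegative weight vectors. Read $F\ge[\mathring{R},\Omega,S]$ as: $F\ge\sum_\alpha\omega_\alpha\lambda_\alpha$ for some weights $\omega_\alpha\ge0$ with $\max_\alpha\omega_\alpha\le\Omega$ and $\sum_\alpha\omega_\alpha=S$. Allowing $\max_\alpha\omega_\alpha\le\Omega$ rather than equality is harmless here, since Lemma~\ref{Lem 3.4} is the only way these expressions are used, and its conclusion only needs an upper bound on the weights. The first step is therefore to record the extremal principle behind Lemma~\ref{Lem 3.4}. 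With $\lambda_1\le\cdots\le\lambda_N$ ordered, the minimum of $\sum_\alpha\omega_\alpha\lambda_\alpha$ over all admissible weights is
\[
m_{\Omega,S}:=\Omega\sum_{\alpha=1}^{m}\lambda_\alpha+(S-m\Omega)\lambda_{m+1},\qquad m=\floor{S/\Omega}.
\]
To see this, note that any admissible weight vector can be changed into the greedy one (weight $\Omega$ on the $m$ smallest eigenvalues and the remainder on $\lambda_{m+1}$) by finitely many transfers. Each transfer moves weight from an index $\beta$ to a smaller index $\alpha<\beta$ without exceeding $\Omega$, and changes the sum by $t(\lambda_\alpha-\lambda_\beta)\le0$.

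With this in hand, part (a) is immediate. If $\omega_\alpha$ is admissible for $(\Omega,S)$, then $c\,\omega_\alpha$ is admissible for $(c\Omega,cS)$, and $\sum_\alpha c\,\omega_\alpha\lambda_\alpha=c\sum_\alpha\omega_\alpha\lambda_\alpha$. The correspondence is bijective, so the two classes of expressions agree up to the factor $c$.

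For part (b), the direct argument is that weights with $\max_\alpha\omega_\alpha\le\Omega$ automatically satisfy $\max_\alpha\omega_\alpha\le\widetilde\Omega$. In the stricter reading, where the highest weight must be attained, I will instead compare the extremal values and show $m_{\Omega,S}\ge m_{\widetilde\Omega,S}$. The reason is that the greedy vector for $(\Omega,S)$ is admissible for $(\widetilde\Omega,S)$, and the transfer argument above pushes it down to the greedy vector for $\widetilde\Omega$ without increasing the sum.

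For part (c), add the two weight vectors. Then $\omega_\alpha+\widetilde\omega_\alpha\ge0$, $\max_\alpha(\omega_\alpha+\widetilde\omega_\alpha)\le\Omega+\widetilde\Omega$, the total weight is $S+\widetilde S$, and the sum of the two weighted sums is exactly $\sum_\alpha(\omega_\alpha+\widetilde\omega_\alpha)\lambda_\alpha$. Combined with (b), this gives the stated inequality.

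The only genuinely delicate point is the bookkeeping in (b): fixing the convention of ``highest weight'' so that the statements are consistent, and checking the monotonicity $\Omega\mapsto m_{\Omega,S}$ across jumps of $m=\floor{S/\Omega}$. I will handle the jumps by the transfer argument, which sidesteps the floor function entirely, rather than by differentiating in $\Omega$.
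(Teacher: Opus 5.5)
Your argument is correct. The paper gives no proof of its own here; the lemma is quoted from \cite{NPW23}. So the useful comparison is with how the symbols are used later. Every bound of the form $F\ge[\mathring{R},\Omega,S]$ in the paper (for instance \eqref{eq:omega-diagonal2}, \eqref{eq:key-weight}, \eqref{tensor1}) comes from an \emph{upper} bound on the individual weights together with an exact computation of the total weight. Such a bound is only ever fed into Lemma~\ref{Lem 3.4}.

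Your reading ``$\max_\alpha\omega_\alpha\le\Omega$'' is therefore the one actually in force. Under it, (a)--(c) follow immediately from the definitions, exactly as you say.

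Your greedy/transfer extremal principle is needed only for the stricter reading of (b). There you can replace it by the one-line estimate behind Lemma~\ref{Lem 3.4}. With $m=\floor{S/\Omega}$,
\[
\sum_\alpha\omega_\alpha\lambda_\alpha-S\lambda_{m+1}=\sum_\alpha\omega_\alpha(\lambda_\alpha-\lambda_{m+1})\ge\Omega\sum_{\alpha\le m}(\lambda_\alpha-\lambda_{m+1}).
\]
This shows directly that the greedy vector is the minimizer, with no majorization bookkeeping. Your transfer argument is also valid, since partial sums of admissible weights are dominated by $\min(k\Omega,S)$; it just takes more work.

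One small point about the strict reading: (b) implicitly requires $\widetilde\Omega\le S$. Otherwise no nonnegative weight vector has highest weight exactly $\widetilde\Omega$ and total weight $S$, and the greedy vector for $\widetilde\Omega$ does not attain its highest weight. In (c) this condition holds automatically, because $\Omega\le S$ and $\widetilde\Omega\le\widetilde S$ give $\Omega+\widetilde\Omega\le S+\widetilde S$.
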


	\section{A BOCHNER FORMULA FOR $(p,p)$ DOUBLE FORM}
	In this section, we give a Bochner formula for symmetric double form and proof Theorem A. The Lichn\'{e}rowicz Laplacians is defined as
	\[{\Delta }_{L}T={\nabla }^*\nabla T+cRic_L(T)\]
	for a suitable constant $c>0.$
	Applying the Lichnerowicz formula for $(0,p)$-tensors established in \cite[1.143]{Einstein} to $(p,q)$-double forms, the curvature term is given by
	\begin{align}\label{Bochner}
		(\Ric_L \omega)_{i_1 \dots i_p, \, j_1 \dots j_q} &= \sum_{a=1}^p R_{i_a}{}^k \omega_{i_1 \dots \underset{(a)}{k} \dots i_p, \, j_1 \dots j_q} + \sum_{b=1}^q R_{j_b}{}^k \omega_{i_1 \dots i_p, \, j_1 \dots \underset{(b)}{k} \dots j_q} \nonumber\\ 
		&\quad - 2 \sum_{1 \le a < b \le p} R_{i_a}{}^m{}_{i_b}{}^n \omega_{i_1 \dots \underset{(a)}{m} \dots \underset{(b)}{n} \dots i_p, \, j_1 \dots j_q} \nonumber\\ 
		&\quad - 2 \sum_{1 \le a < b \le q} R_{j_a}{}^m{}_{j_b}{}^n \omega_{i_1 \dots i_p, \, j_1 \dots \underset{(a)}{m} \dots \underset{(b)}{n} \dots j_q} \nonumber\\ 
		&\quad - 2 \sum_{a=1}^p \sum_{b=1}^q R_{i_a}{}^m{}_{j_b}{}^n \omega_{i_1 \dots \underset{(a)}{m} \dots i_p, \, j_1 \dots \underset{(b)}{n} \dots j_q}. 
	\end{align}
	
	We shall often use the shorthand notation
	\begin{align*}
		I=(i_1,\ldots,i_p),I'=(i_1,\ldots,i_{p-1});\qquad respectively,\quad I'' \ and\  I''',\\
		J=(j_1,\ldots,j_q),J'=(j_1,\ldots,j_{q-1});\qquad respectively,\quad J'' \ and \ J'''
	\end{align*}
	for the corresponding multi-indices.
	\begin{proposition}
		Let $\omega$ be a symmetric $(p,p)$ double form satisfying the first Bianchi identity, then
		\begin{equation}\label{eq:bochner-Sijkl}
			g(\Ric_{L}(\omega),\omega)
			=
			\frac12
			\sum_{I,J}
			\left\langle
			\mathring R(S^{I,J}),
			S^{I,J}
			\right\rangle,
		\end{equation}
		where for $I=(i_1,\ldots,i_p),J=(j_1,\ldots,j_p)$
		
		\begin{equation}\label{eq:Sijkl-def}
			S^{I,J}=\sum_{s}
			\sum\limits_{\alpha =1}^{p}\omega_{i_1\ldots \underset{(\alpha)}{s}\ldots i_p,J}\,e_s\odot e_{i_\alpha}-
			\sum_{s}\sum\limits_{\alpha =1}^{p}\omega_{I,j_1\ldots \underset{(\alpha)}{s}\ldots j_p}\,e_s\odot e_{j_\alpha}
		\end{equation}
	\end{proposition}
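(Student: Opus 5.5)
The plan is to compute both sides of \eqref{eq:bochner-Sijkl} in components with respect to an orthonormal frame, and to show that they agree term by term after the first Bianchi identity is used to rearrange the mixed terms.

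\textbf{Step 1: expand the right-hand side.} Write $\mathring R$ through $\bar R$. Since the $\Pr_{S^2_0}$-projection only removes trace parts, the first task is to check how the trace part of $S^{I,J}$ contributes. The trace of $S^{I,J}$ is
\[
\tr S^{I,J} = 2\sum_\alpha \omega_{i_1\ldots i_\alpha\ldots i_p,J} - 2\sum_\alpha \omega_{I,j_1\ldots j_\alpha\ldots j_p}=0,
\]
because each summand with $s=i_\alpha$ just reproduces $\omega_{I,J}$, and the two groups cancel. So $S^{I,J}\in S^2_0(V)$, and
\[
\langle \mathring R(S^{I,J}),S^{I,J}\rangle=\langle \bar R(S^{I,J}),S^{I,J}\rangle .
\]
From the definition $\bar R(e_i\odot e_j)=\sum R_{kijl}e_k\odot e_l$ one gets
\[
\langle \bar R(e_a\odot e_b),e_c\odot e_d\rangle = 2(R_{cabd}+R_{dabc}),
\]
up to the sign convention fixed by the basis normalization. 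Plugging in the expansion of $S^{I,J}$ gives a quadratic expression in $\omega$ with three types of pairings:
\begin{itemize}
\item $(i_\alpha,i_\beta)$ pairings between two $I$-type terms;
\item $(j_\alpha,j_\beta)$ pairings between two $J$-type terms;
\item mixed $I$–$J$ pairings.
\end{itemize}

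\textbf{Step 2: sort the terms.} Within each type, split into diagonal terms ($\alpha=\beta$) and off-diagonal terms ($\alpha\ne\beta$).
\begin{itemize}
\item The diagonal terms with index $s$ contracted against $R_{s i_\alpha i_\alpha t}$ produce the Ricci terms $R_{i_a}{}^k\omega_{\ldots k\ldots}$ of \eqref{Bochner}, after summing over $i_\alpha$.
\item They also produce terms of the form $R_{s i t j}\,\omega_{\ldots s\ldots}\omega_{\ldots t\ldots}$, where the curvature sits in the "wrong" slot order.
\item The off-diagonal terms inside $I$ (resp. $J$) produce the $-2\sum_{a<b}R_{i_a}{}^m{}_{i_b}{}^n$ terms, together with terms involving $R_{i_a m n i_b}$.
\end{itemize}
These leftover wrong-order terms are to be converted using the curvature Bianchi identity $R_{a m n b}+R_{a n b m}+R_{a b m n}=0$, combined with the antisymmetry of $\omega$ in each block; the $R_{abmn}$ piece pairs $m,n$ inside one alternating block and becomes a multiple of the desired term.

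\textbf{Step 3: use the first Bianchi identity of $\omega$.} The mixed terms give
\[
-R_{i_a}{}^m{}_{j_b}{}^n\,\omega_{\ldots m\ldots,\ldots n\ldots}\,\omega_{I,J},
\]
with factor $-2$ coming from the minus sign in $S^{I,J}$. There is also a cross product in which a single index moves across from the $I$-block into the $J$-block. These cross terms are exactly those of the form $\sum \omega_{i\,I'',jJ''}\omega_{j\,I'',iJ''}$ seen in \eqref{Bianchi}. This is where $\mathcal B\omega=0$ enters, via \eqref{Bianchi1}: moving $s$ into the $J$ block lets the unwanted mixed terms be expressed through the wanted ones. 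The symmetry $\omega_{I,J}=\omega_{J,I}$ is used to identify the $I$-contributions with the $J$-contributions, which explains the overall factor $\tfrac12$.

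\textbf{Main obstacle.} The bookkeeping in Step 3 is the hard part. One must verify that, after applying $\mathcal B\omega=0$ together with the curvature Bianchi identity, the coefficients of the three curvature terms in \eqref{Bochner} come out exactly as $1,-2,-2$ relative to the Ricci terms. I would first check the cases $p=1$ (symmetric 2-tensors, the known formula of \cite{NPW23}) and $p=2$ (algebraic curvature tensors, as in \cite{DF24}) to fix the signs and normalizations. I would then do the general computation by isolating one slot $\alpha$ and using the alternating structure to reduce to these cases.
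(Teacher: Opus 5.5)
Your Step 1 (trace-freeness of $S^{I,J}$, so that $\mathring R$ may be replaced by $\bar R$, together with $\langle\bar R(e_a\odot e_b),e_c\odot e_d\rangle=2(R_{cabd}+R_{dabc})$) and the plan of a direct component expansion are exactly the paper's route. However, there is a genuine gap: the proposal stops where the real content begins, and the mechanism you sketch for that part does not work as stated. Carry out the expansion, using $\omega_{I,J}=\omega_{J,I}$ to merge the two blocks. The diagonal ($\alpha=\beta$) terms of the $I$--$I$ pairing then give only the Ricci term $2R_{st}\omega_{sI',J}\omega_{tI',J}$, with no wrong-order terms, and one arrives at
\[
\tfrac12 g(\Ric_L\omega,\omega)=\tfrac14\sum_{I,J}\langle\mathring R(S^{I,J}),S^{I,J}\rangle+p(p-1)R_{sjit}\omega_{sjI'',J}\omega_{itI'',J}-2p^2R_{sjit}\omega_{sI',jJ'}\omega_{iI',tJ'} .
\]
The within-block terms and the mixed terms do not match the Bochner formula separately. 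You propose converting $R_{sjit}\omega_{sjI'',J}\omega_{itI'',J}=2R_{ijkl}\omega_{ikI'',J}\omega_{jlI'',J}$ by the curvature Bianchi identity. This leaves the coefficient of $R_{ijkl}\omega_{ikI'',J}\omega_{jlI'',J}$ at $-3p(p-1)$ on the $\tfrac14\sum\langle\mathring R(S^{I,J}),S^{I,J}\rangle$ side, against $-p(p-1)$ in $\tfrac12 g(\Ric_L\omega,\omega)$. That excess can only be absorbed by the leftover mixed term. What is needed is the cross identity
\[
(p-1)R_{sjit}\omega_{sjI'',J}\omega_{itI'',J}=2p\,R_{sjit}\omega_{sI',jJ'}\omega_{iI',tJ'}
\]
(Proposition 3.2 of the paper). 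It turns a mixed contraction into a within-block one, not into the mixed Bochner term $R_{ijkl}\omega_{iI',kJ'}\omega_{jI',lJ'}$.

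This identity is the missing idea, and it is where $\mathcal B\omega=0$ actually enters.

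\begin{enumerate}
\item Antisymmetrize: $R_{sjit}\omega_{sI',jJ'}\omega_{iI',tJ'}=\frac14R_{sjit}(\omega_{sI',jJ'}-\omega_{jI',sJ'})(\omega_{iI',tJ'}-\omega_{tI',iJ'})$.
\item Replace the two differences by $\sum_\alpha(-1)^\alpha\omega_{jsI'/(i_\alpha),\,i_\alpha J'}$ and $\sum_\beta(-1)^\beta\omega_{tiI'/(i_\beta),\,i_\beta J'}$, using the component form of the first Bianchi identity, and expand.
\item The $\alpha=\beta$ terms give $(p-1)R_{sjit}\omega_{sjI'',J}\omega_{itI'',J}$.
\item For $p\ge3$ the $\alpha\neq\beta$ terms give $-(p-1)(p-2)R_{sjit}\omega_{sjlI''',mJ'}\omega_{itmI''',lJ'}$. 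This must be evaluated by a second use of $\mathcal B\omega=0$ combined with $\omega_{I,J}=\omega_{J,I}$ (to move indices between blocks), which gives $R_{sjit}\omega_{sjI'',J}\omega_{itI'',J}=p\,R_{sjit}\omega_{sjlI''',mJ'}\omega_{itmI''',lJ'}$.
\end{enumerate}

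The $\alpha\neq\beta$ contributions do not exist for $p\le2$, so checking $p=1,2$ and isolating one slot will not produce the general case. Also, the relevant objects are these curvature-weighted contractions. They are not the norm identity for $|\mathcal B\omega|^2$ you point to; the paper uses that only afterwards, to compute the total weight.
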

	\begin{proof}
		Let $\omega$ be a symmetric $(p,p)$ double form, from (\ref{Bochner}) we have
		\begin{align}\label{Bochner1} 
			\frac{1}{2}g(\Ric_L(\omega ),\omega)=pR_{st}{\omega }_{sI',J}{\omega }_{tI',J}-p(p-1){R_{ijkl}}{\omega }_{jlI'',J}{\omega }_{ikI'',J}-p^2R_{ijkl}{\omega }_{iI',kJ'}{\omega }_{jI',lJ'}.
		\end{align}
		
		It follows from (\ref{eq:Sijkl-def}) that $S^{I,J}$ is a trace-free symmetric $(0,2)$-tensor. Therefore,
		 $$\left\langle
		 \mathring R(S^{I,J}),
		 S^{I,J}
		 \right\rangle=\left\langle
		 \bar R(S^{I,J}),
		 S^{I,J}
		 \right\rangle.$$
		
		A direct computation using the symmetries of the curvature tensor,
		together with	the first Bianchi identity yields
		
		\begin{align}\label{SIJ}
			& \frac{1}{2p}\sum\limits_{I,J}{\left\langle \mathring R(S^{I,J}),S^{I,J} \right\rangle } \nonumber\\ 
			=&{\omega }_{si_2\ldots i_p,J}{\omega }_{ti_2\ldots i_p,J}\left\langle \bar R(e_s\odot e_{i_1}),e_t\odot e_{i_1} \right\rangle\nonumber\\
			& +\sum\limits_{\alpha =2}^{p}{{\omega}_{si_2\ldots i_p,J}{\omega }_{i_1\ldots \underset{(\alpha)}{t}\ldots i_p,J}}\left\langle \bar R(e_s\odot e_{i_1}),e_t\odot e_{i_{\alpha}} \right\rangle-\sum\limits_{\alpha =1}^{p}{{\omega }_{si_2\ldots i_p,J}{\omega }_{I,j_1\ldots \underset{(\alpha)}{t}\ldots j_p}\left\langle \bar R(e_s\odot e_{i_1}),e_t\odot e_{j_{\alpha }} \right\rangle } \nonumber\\ 
			=&2R_{st}{\omega}_{sI',J}{\omega }_{tI',J}+2(p-1)(R_{stji}+R_{sjti}){\omega }_{sjI'',J}{\omega }_{itI'',J}-2p(R_{stji}+R_{sjti}){\omega }_{sI',jJ'}{\omega }_{iI',tJ'} \nonumber\\ 
			=&2R_{st}{\omega }_{sI',J}{\omega }_{tI',J}+2(p-1)R_{stji}{\omega }_{sjI'',J}{\omega }_{itI'',J}+2(p-1)R_{sjti}{\omega}_{sjI'',J}{\omega }_{itI'',J}\nonumber\\
			&-2p(R_{sijt}+R_{sjti}){\omega }_{sI',jJ'}{\omega }_{iI',tJ'}-2pR_{sjti}{\omega }_{sI',jJ'}{\omega }_{iI',tJ'} \nonumber\\ 
			=&2R_{st}{\omega }_{sI',J}{\omega }_{tI',J}-2(p-1)R_{stji}{\omega }_{sjI'',J}{\omega }_{tiI'',J}-2pR_{sijt}{\omega }_{sI',jJ'}{\omega }_{iI',tJ'}\nonumber\\
			&-2(p-1)R_{sjit}{\omega }_{sjI'',J}{\omega }_{itI'',J}+4pR_{sjit}{\omega }_{sI',jJ'}{\omega }_{iI',tJ'}
		\end{align}
		where the first and second equalities follow from 
		$${\omega}_{i_1\ldots \underset{(\alpha)}{s}\ldots i_p,J}{\omega }_{i_1\ldots \underset{(\beta)}{t}\ldots i_p,J}={{\omega }_{si_{\beta }{I}/{(i_{\alpha},i_{\beta})}\;}}{{\omega}_{i_{\alpha}t{I}/{(i_{\alpha },i_{\beta })}\;}}:={{\omega }_{sjI'',J}}{{\omega }_{itI'',J}},$$
		and
		$${\omega}_{i_1\ldots \underset{(\alpha)}{s}\ldots i_p,J}{\omega }_{I,j_1\ldots \underset{(\beta)}{t}\ldots j_p}={{\omega }_{s{I}/{(i_{\alpha})},j_{\beta}{J}/{(j_{\beta })}\;}}{{\omega}_{i_{\alpha}{I}/{(i_{\alpha})},t{J}/{(j_{\beta })}\;}}:={{\omega }_{sI',jJ'}}{{\omega }_{iI',tJ'}}.$$
		
		A comparison of (\ref{SIJ})  with (\ref{Bochner1})  yields
		\begin{equation*}
			\frac{1}{2}g(\Ric_{L}(\omega ),\omega )=\frac{1}{4}\sum\limits_{I,J}{\left\langle \mathring R(S^{I,J}),S^{I,J} \right\rangle }+p(p-1)R_{sjit}{\omega}_{sjI'',J}{\omega }_{itI'',J}-2p^2R_{sjit}{\omega }_{sI',jJ'}{\omega }_{iI',tJ'}.
		\end{equation*}
		Proposition 3.1 follows immediately from Proposition~\ref{estimate1}.
	\end{proof}
	\begin{proposition}\label{estimate1}
		Let $\omega$ be a symmetric $(p,p)$ double form satisfying the first Bianchi identity, then
		\[(p-1)R_{sjit}{\omega}_{sj{I}'',J}{\omega }_{it{I}'',J}=2pR_{sjit}{\omega}_{sI',jJ'}{\omega}_{iI',tJ'},\]
	\end{proposition}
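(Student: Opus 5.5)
The plan is to transform the right-hand side into the left-hand side by repeatedly moving an index from the second block of $\omega$ into the first, using the component form (\ref{Bianchi1}) of $\mathcal{B}\omega=0$. Two further symmetries will be used throughout: $R_{sjit}$ is skew in $(s,j)$ and in $(i,t)$, and $\omega_{I,J}=\omega_{J,I}$ because $\omega$ is symmetric. Write $Q:=R_{sjit}\,\omega_{sI',jJ'}\,\omega_{iI',tJ'}$ for the quantity on the right, so the goal is $2pQ=(p-1)R_{sjit}\,\omega_{sjI'',J}\,\omega_{itI'',J}$.

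\textbf{Step 1: first Bianchi move.} Apply (\ref{Bianchi1}) to the $(p+1,p-1)$ component indexed by $(j,s,I')$ and $J'$, with $I'=(i_2,\dots,i_p)$. This gives
\[
\omega_{sI',jJ'}-\omega_{jI',sJ'}+\sum_{k=2}^{p}\pm\,\omega_{js\,I'\setminus i_k,\;i_kJ'}=0 .
\]
Substitute this into the first factor of $Q$. The term $\omega_{jI',sJ'}$ contributes $-Q$ after relabelling $s\leftrightarrow j$, by the skew symmetry of $R$ in its first pair. Hence
\[
2Q=\mp\sum_{k}R_{sjit}\,\omega_{js\,I'\setminus i_k,\;i_kJ'}\,\omega_{iI',tJ'} .
\]
Since $\omega$ is alternating in $I'$, the $p-1$ summands are equal after relabelling. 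So $2Q=\pm(p-1)\,R_{sjit}\,\omega_{sjK,uJ'}\,\omega_{iuK,tJ'}$, with $K$ of length $p-2$. This is where the factor $(p-1)$ on the left-hand side comes from.

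\textbf{Step 2: second Bianchi move on the other factor.} The index $t$ still sits in the second block of $\omega_{iuK,tJ'}$. Because $R_{sjit}$ is skew in $(i,t)$, only the part of $\omega_{iuK,tJ'}$ that is antisymmetrized in $(i,t)$ contributes. I will rewrite this antisymmetrized part with the first Bianchi identity, applied now on the $(p+1)$-tuple $(i,t,u,K)$ with $J'$. That produces terms of the form $\omega_{itK\ast,\ast J'}$ with both $i$ and $t$ in the first block, together with remainder terms in which $u$ or an index of $K$ has moved to the second block. Where it is convenient to put the $J$-indices in the first slot, I will use $\omega_{I,J}=\omega_{J,I}$. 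The remainder terms should either reproduce a multiple of $Q$, which can be absorbed into the left-hand side of the resulting linear relation, or reassemble the full index $J=(u,J')$. Collecting everything should give $R_{sjit}\,\omega_{sjK,uJ'}\,\omega_{iuK,tJ'}=c_p\,R_{sjit}\,\omega_{sjI'',J}\,\omega_{itI'',J}+c'_p\,Q$ for explicit constants $c_p,c_p'$. Solving the linear relation for $Q$ then yields the claimed identity with constant $\tfrac{p-1}{2p}$.

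\textbf{Main obstacle.} The difficulty is the bookkeeping in Step 2: tracking signs and multiplicities in the Bianchi expansion, and recognizing which terms reassemble into $Q$ and which into the target expression. I would first verify the identity for $p=2$, where $K$ is empty, to calibrate the signs and constants, and only then run the general-$p$ relabelling argument.
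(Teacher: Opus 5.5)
Your Steps 1 and 2 are sound, and once carried out they reproduce the paper's intermediate relation. The paper antisymmetrizes both factors of $Q$ at once; you do it one factor at a time. Set $X:=R_{sjit}\omega_{sjI'',J}\omega_{itI'',J}$. Step 1 gives $2Q=(p-1)Z$ with $Z=R_{sjit}\omega_{sjK,uJ'}\omega_{iuK,tJ'}$. The Bianchi expansion on $(t,i,u,K)$, $J'$ then gives
\[
Z=\tfrac12 X-\tfrac{p-2}{2}\,Y,\qquad Y:=R_{sjit}\,\omega_{sjlI''',mJ'}\,\omega_{itmI''',lJ'},
\]
hence $4Q=(p-1)X-(p-1)(p-2)Y$.

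The gap is in how you expect the remainder terms to close up. The terms in which an index of $K$ moves to the second block assemble into $Y$, which is neither of the two things you anticipate. It is not a multiple of $Q$: in $Y$ both curvature pairs sit in the first blocks. It is not part of $X$: the two $\omega$-factors carry $l$ and $m$ in opposite blocks. So Step 2 does not produce a relation $Z=c_pX+c_p'Q$. You are left with one linear relation among three quantities and cannot solve for $Q$. As written, the argument is complete only for $p\le 2$, where $Y$ does not occur.

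What is missing is a second, independent relation, which the paper supplies as $X=pY$. This is where $\omega_{I,J}=\omega_{J,I}$ is used essentially rather than for bookkeeping. Apply the first Bianchi identity to the component $(\mathcal{B}\omega)_{i_{p-2}J,\,itI'''}$, whose $(p+1)$-block contains all of $J$. Then swap the blocks in each resulting term. With $I''=(I''',i_{p-2})$ this gives
\[
(-1)^{p-3}\,\omega_{itI'',J}=\sum_{\alpha=1}^{p}(-1)^{\alpha-1}\,\omega_{itj_\alpha I''',\;i_{p-2}(J\setminus j_\alpha)} .
\]
Contract with $R_{sjit}\omega_{sjI'',J}$. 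After relabelling, each of the $p$ summands becomes $Y$, so $X=pY$. Substituting, $4Q=(p-1)X-\frac{(p-1)(p-2)}{p}X=\frac{2(p-1)}{p}X$, which is the claim.
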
	
	\begin{proof}
		Applying (\ref{Bianchi1}) to ${\omega }_{itI'',J}$ while also using the symmetry of $\omega $ yields 
		\begin{align*}
			0={\left( \mathcal{B}\omega  \right)}_{i_{p-2}J,it{I}'''}&={\omega}_{J,i_{p-2}itI'''}+\sum\limits_{\alpha }{(-1)^{\alpha }{\omega }_{i_{p-2}{J}/{j_{\alpha}},j_{\alpha }itI'''\;}}\\
			&=
			{\omega}_{i_{p-2}itI''',J}+\sum\limits_{\alpha }{(-1)^{\alpha }{\omega }_{j_{\alpha }itI''',i_{p-2}{J}/{j_{\alpha}}\;}}\\
			&=(-1)^{p-3}{\omega}_{itI'',J}+\sum\limits_{\alpha }{(-1)^{\alpha }{\omega }_{itj_{\alpha }I''',i_{p-2}{J}/{j_{\alpha}}\;}}. 
		\end{align*}
		Multiplying both sides by  $R_{sjit}{\omega }_{sjI'',J}$ gives 
		\begin{eqnarray}\label{Bianchi2}
			R_{sjit}{\omega }_{sjI'',J}{\omega }_{itI'',J}&=&\sum\limits_{\alpha }{(-1)^{\alpha-1}R_{sjit}{\omega }_{sji_{p-2}I''',J}{\omega }_{itj_{\alpha}I''',i_{p-2}{J}/{j_{\alpha}}\;}}\nonumber\\
			&=&\sum\limits_{\alpha }{R_{sjit}{\omega }_{sji_{p-2}I''',j_{\alpha}{J}/{j_{\alpha}}}{\omega }_{itj_{\alpha}I''',i_{p-2}{J}/{j_{\alpha}}\;}}\nonumber\\
			&=&pR_{sjit}{\omega }_{sjlI''',mJ'}{\omega}_{itmI''',lJ'}.
		\end{eqnarray}
		
		On the other hand, for the term $R_{sjit}{\omega }_{sI',jJ'}{\omega }_{iI',tJ'}$, by using the symmetries of the curvature tensor we have
		\[R_{sjit}{\omega }_{sI',jJ'}{\omega }_{iI',tJ'}=\frac{1}{4}R_{sjit}({\omega }_{sI',jJ'}-{\omega }_{jI',sJ'})({\omega }_{iI',tJ'}-{\omega }_{tI',iJ'}).\]
		From (\ref{Bianchi1})	
		\[0=\left( \mathcal{B}\omega  \right)_{jsI',J'}={\omega }_{sI',jJ'}-{\omega }_{jI',sJ'}+\sum\limits_{\alpha =1}^{p-1}{\left(-1 \right)^{\alpha-1}{\omega}_{js{I'}/{(i_{\alpha })}\;,i_{\alpha}J'}},\]
		that is 	
		\[{\omega }_{sI',jJ'}-{\omega }_{jI',sJ'}=\sum\limits_{\alpha =1}^{p-1}{\left( -1 \right)^{\alpha }{\omega}_{js{I'}/{(i_{\alpha })}\;,i_{\alpha }J'}}.\]
		Similarly,	
		\[{\omega }_{iI',tJ'}-{\omega }_{tI',iJ'}=\sum\limits_{\alpha =1}^{p-1}{\left( -1 \right)^{\alpha}{\omega }_{ti{I'}/{(i_{\alpha })}\;,i_{\alpha }J'}}.\]
		Therefore,
		\begin{align*}
			4R_{sjit}{\omega }_{sI',jJ'}{\omega }_{iI',tJ'}&=\sum\limits_{\alpha =1}^{p-1}{R_{sjit}{\omega }_{js{I'}/{(i_{\alpha })}\;,i_{\alpha }J'}{\omega }_{ti{I'}/{(i_{\alpha })}\;,i_{\alpha}J'}}+\sum\limits_{\alpha \ne \beta }{(-1)^{\alpha +\beta}R_{sjit}{\omega }_{js{I'}/{(i_{\alpha})}\;,i_{\alpha }J'}{\omega }_{ti{I'}/{(i_{\beta })}\;,i_{\beta }J'}} \\ 
			& =(p-1){{R}_{sjit}}{{\omega }_{sj{I}'',J}}{{\omega }_{it{I}'',J}}-(p-1)(p-2){{R}_{sjit}}{{\omega }_{sjl{I}''',m{J}'}}{{\omega }_{itm{I}''',l{J}'}}.\\ 
		\end{align*}
		Substituting (\ref{Bianchi2}) into the above expression completes the proof.
	\end{proof}
	
	Let $\{S_\alpha\}_{\alpha=1}^N$ be an orthonormal eigenbasis of $\mathring R$ with corresponding eigenvalues $\lambda_\alpha$.
	Write
	\[
	S^{I,J}
	=
	\sum_{\alpha=1}^N
	(S^{I,J})_\alpha S_\alpha,
	\qquad
	(S^{I,J})_\alpha
	=
	\langle S^{I,J},S_\alpha\rangle,
	\]
	then from \eqref{eq:bochner-Sijkl},
	\begin{equation}\label{eq:bochner-weight}
		\begin{split}
			g(\Ric_L(\omega),\omega)
			&=
			\frac12
			\sum_{\alpha=1}^N
			\lambda_\alpha
			\sum_{I,J}
			\bigl((S^{I,J})_\alpha\bigr)^2.
		\end{split}
	\end{equation}
	To apply the weight principle of Lemma~\ref{Lem 3.4}, we need to find the total weight of $\sum_{\alpha,I,J}
	\bigl((S^{I,J})_\alpha\bigr)^2$ and estimate the highest weight of $\sum_{I,J}
	\bigl((S^{I,J})_\alpha\bigr)^2$.
	Since $\{S_\alpha\}$ is an
	orthonormal basis of $S^2_0(V)$,
	\[
	\sum_{\alpha=1}^N
	\bigl((S^{I,J})_\alpha\bigr)^2
	=
	|S^{I,J}|^2.
	\]
	Moreover, recall that $\mathring{R}=id$ if $M$ is round sphere, hence setting $R_{ijkl}=\delta_{ik}\delta_{jl}-\delta_{il}\delta_{jk}$ in (\ref{SIJ}) and combining with Proposition~\ref{estimate1} we get
	\begin{equation}\label{eq:Sijkl-total-norm}
		\sum_{I,J}|S^{I,J}|^2
		=
		4p(n-p+1)|\omega|^2-4p^2|\boldsymbol{c}\omega|^2,
	\end{equation}
	where we use (\ref{Bianchi}) to get $\omega_{sI',jJ'}\omega_{jI',sJ'}=\frac1p|\omega|^2.$
	
	It remains to estimate the highest weight of $\sum_{I,J}((S^{I,J})_\alpha)^2$. Fix $\alpha$, at the point
	under consideration, choose an orthonormal basis such that
	\[
	S_\alpha
	=
	\operatorname{diag}(a_1,\ldots,a_n).
	\]
	Since $S_\alpha$ is trace-free and $|S_\alpha|=1$,
	\begin{equation*}\label{eq:a-normalization}
		\sum_{i=1}^n a_i=0,
		\qquad
		\sum_{i=1}^n a_i^2=1.
	\end{equation*}
	From \eqref{eq:Sijkl-def}, we have
	\begin{equation*}
		\begin{split}
			(S^{I,J})_\alpha
			&=\langle S^{I,J},S_\alpha\rangle \\
			&=2(\sum\limits_{i\in I}{a_i}-\sum\limits_{j\in J}{a_j}){\omega}_{I,J}.
		\end{split}
	\end{equation*}
	Thus
	\begin{equation}\label{omega-diagonal}
		\sum_{I,J}\bigl((S^{I,J})_\alpha\bigr)^2	=4\sum\limits_{I,J}{(\sum\limits_{i\in I}{a_i}-\sum\limits_{j\in J}{a_j})^2\omega _{I,J}^2}.
	\end{equation}
	Suppose $|I\cap J|=r$, then by Cauchy-Schwarz inequality
	$$(\sum\limits_{i\in I}{a_i}-\sum\limits_{j\in J}{a_j})^2\le 2(p-r)\sum\limits_{i\in {I\cup J}/{I\cap J}\;}{a_{i}^2}\le 2(p-r) \le2p.$$
	Therefore,
	\begin{equation}\label{eq:omega-diagonal2}
		\sum_{I,J}\bigl((S^{I,J})_\alpha\bigr)^2\le 8p|\omega|^2.    
	\end{equation}

	In the case $p = 2$, we can refine the argument by applying the Bianchi identity. Let $I=(i,j),J=(k,l)$, from \eqref{eq:Sijkl-def} we have
	\begin{equation*}\label{eq:Sijkl-component}
		\begin{split}
			(S^{I,J})_\alpha
			&=\langle S^{I,J},S_\alpha\rangle \\
			&=2(a_i+a_j-a_k-a_l)\omega_{ij,kl}.
		\end{split}
	\end{equation*}
	Thus
	\begin{equation}\label{eq:omega-diagonal}
		\sum_{I,J}\bigl((S^{I,J})_\alpha\bigr)^2	=4\sum_{i,j,k,l}(a_i+a_j-a_k-a_l)^2\omega_{ij,kl}^2.
	\end{equation}
	
	\begin{proposition}
		If $\omega$ satisfies the first Bianchi identity
		\begin{equation}\label{eq:key-weight}
			\sum_{i,j,k,l}
			(a_i+a_j-a_k-a_l)^2\omega_{ij,kl}^2
			\leq
			\frac83|\omega|^2.
		\end{equation}
	\end{proposition}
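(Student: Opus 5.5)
The plan is to prove \eqref{eq:key-weight} pointwise, by splitting the sum over $(i,j,k,l)$ according to the coincidence pattern of the indices. We then treat the only dangerous configuration, four distinct indices, by applying the first Bianchi identity to a triple of components. Throughout, $\omega$ is a symmetric $(2,2)$ double form, as in the $p=2$ case above. We use $\sum_i a_i=0$ and $\sum_i a_i^2=1$, and we write $|\omega|^2=\sum_{i,j,k,l}\omega_{ij,kl}^2$.

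\emph{Step 1 (degenerate index patterns).} If $\{i,j\}=\{k,l\}$ the weight $(a_i+a_j-a_k-a_l)^2$ vanishes. If exactly one index is shared, say $i=k$ with $j\neq l$, the weight is $(a_j-a_l)^2\le 2(a_j^2+a_l^2)\le 2<\frac83$. The other one-coincidence patterns are the same up to the antisymmetries of $\omega$. So these terms contribute at most $\frac83\sum\omega_{ij,kl}^2$ over the corresponding index set.

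\emph{Step 2 (four distinct indices).} Fix a $4$-element set $\{i,j,k,l\}$ and consider the three pairings. Put
\[
u=\omega_{ij,kl},\quad v=\omega_{ik,lj},\quad w=\omega_{il,jk}.
\]
By \eqref{Bianchi1} together with the symmetry of $\omega$, the first Bianchi identity gives $u+v+w=0$. Up to sign, every component of $\omega$ indexed by these four distinct letters equals one of $u,v,w$, and each of $u,v,w$ occurs the same number of times in the full sum over ordered indices. The corresponding weights are $x_1^2,x_2^2,x_3^2$, where
\[
x_1=a_i+a_j-a_k-a_l,\quad x_2=a_i+a_k-a_j-a_l,\quad x_3=a_i+a_l-a_j-a_k.
\]
A direct expansion gives
\[
x_1^2+x_2^2+x_3^2=4(a_i^2+a_j^2+a_k^2+a_l^2)-(a_i+a_j+a_k+a_l)^2\le 4.
\]
So it suffices to show that
\[
x_1^2u^2+x_2^2v^2+x_3^2w^2\le \tfrac23\,(x_1^2+x_2^2+x_3^2)(u^2+v^2+w^2)
\quad\text{whenever } u+v+w=0.
\]
Let $D=\operatorname{diag}(x_1^2,x_2^2,x_3^2)$ and restrict it to the plane $P=(1,1,1)^\perp$. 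The restricted form has two nonnegative eigenvalues $\mu_1,\mu_2$ whose sum equals
\[
\tr D-\langle Dn,n\rangle=\tfrac23\tr D,\qquad n=\tfrac{1}{\sqrt3}(1,1,1).
\]
Hence $\mu_{\max}\le \frac23\tr D\le\frac83$. Summing over all $4$-element sets and using the equal multiplicities then bounds the distinct-index contribution by $\frac83$ times the corresponding part of $|\omega|^2$.

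\emph{Step 3.} Adding the contributions of Steps 1 and 2 gives \eqref{eq:key-weight}.

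The main obstacle is the bookkeeping in Step 2. One must check carefully that the Bianchi sum \eqref{Bianchi1}, combined with the symmetry $\omega_{ij,kl}=\omega_{kl,ij}$, really yields $u+v+w=0$ with exactly these signs. One must also check that each of $u,v,w$ appears equally often, up to sign, among the $4!$ ordered components. This multiplicity statement is what lets us pass from the bound for the triple $(u,v,w)$ to a bound in terms of $|\omega|^2$. By contrast, the eigenvalue bound for the restricted form is elementary.
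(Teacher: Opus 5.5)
Your proof is correct and follows the paper's argument: the same case split according to $|\{i,j\}\cap\{k,l\}|$, the same Bianchi relation $u+v+w=0$, and the same bound $x_1^2+x_2^2+x_3^2\le 4$. The only difference is cosmetic. You bound $x_1^2u^2+x_2^2v^2+x_3^2w^2$ by the top eigenvalue of $\operatorname{diag}(x_1^2,x_2^2,x_3^2)$ restricted to the plane $u+v+w=0$, whereas the paper uses the coordinatewise inequality $u^2\le 2(v^2+w^2)$, i.e.\ $u^2\le\frac23(u^2+v^2+w^2)$; both give the same factor $\frac23(x_1^2+x_2^2+x_3^2)$.
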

	\begin{proof}
		For fixed $i,j,k,l$, if $|\{i,j\}\cap \{k,l\}|=1$, then 
		\begin{equation}\label{eq1}
			(a_i+a_j-a_k-a_l)^2\omega_{ij,kl}^2\le 2\omega_{ij,kl}^2\le\frac83\omega_{ij,kl}^2.
		\end{equation}
		
		If $|\{i,j\}\cap \{k,l\}|=0$, that is, $i,j,k,l$ are pairwise distinct. By the first Bianchi identity,				
		\begin{equation}\label{eq:xyz-bianchi}
			\omega_{ij,kl}+\omega_{ik,lj}+\omega_{il,jk}=0.
		\end{equation}
		Denote
		\[
		\begin{split}
			p&=a_i+a_j-a_k-a_l,\\
			q&=a_i+a_k-a_l-a_j,\\
			r&=a_i+a_l-a_j-a_k.
		\end{split}
		\]
		A direct computation gives
		\begin{equation}\label{eq:pqr}
			\begin{split}
				p^2+q^2+r^2&=4(a_i^2+a_j^2+a_k^2+a_l^2)-(a_i+a_j+a_k+a_l)^2\\
				&\leq4(a_i^2+a_j^2+a_k^2+a_l^2)\leq 4.
			\end{split}
		\end{equation}	
		On the other hand, \eqref{eq:xyz-bianchi} implies
		\[
		\omega_{ij,kl}^2\leq 2(\omega_{ik,lj}^2+\omega_{il,jk}^2),
		\]
		and hence
		\[
		\omega_{ij,kl}^2\leq\frac23(\omega_{ij,kl}^2+\omega_{ik,lj}^2+\omega_{il,jk}^2).
		\]
		The same argument gives
		\[
		\omega_{ik,lj}^2,\omega_{il,jk}^2\leq \frac23(\omega_{ij,kl}^2+\omega_{ik,lj}^2+\omega_{il,jk}^2).
		\]
		Combining this with \eqref{eq:pqr}, we obtain
		\begin{align*}
			p^2\omega_{ij,kl}^2+q^2\omega_{ik,lj}^2+r^2\omega_{il,jk}^2
			&\leq
			\frac23(\omega_{ij,kl}^2+\omega_{ik,lj}^2+\omega_{il,jk}^2)
			(p^2+q^2+r^2)\\
			&\leq
			\frac83(\omega_{ij,kl}^2+\omega_{ik,lj}^2+\omega_{il,jk}^2).
		\end{align*}
		Summing over $\{i,j,k,l\}$ yields
		\[ (a_i+a_j-a_k-a_l)^2\omega_{ij,kl}^2\le \frac83\omega_{ij,kl}^2,\]
		this together with (\ref{eq1}) proves \eqref{eq:key-weight}.
	\end{proof}
	\begin{remark}
		The constant $\frac83$ in \eqref{eq:key-weight} is optimal. Indeed, take $n=4$, $$a_1=a_2=\frac12,a_3=a_4=-\frac12,$$  and 
		$$\omega_{12,34}=2,\omega_{13,24}=1,\omega_{14,23}=-1,\omega_{ijkl}=0\quad\text{for}\quad\{i,j,k,l\}\ne \{1,2,3,4\}.$$
		By direct computation, one can verify the equality of Proposition 3.3.
	\end{remark}
	
	\begin{proof}[Proof of Theorem A]
		For $p=1$, we can directly compute from (\ref{Bochner})
		\begin{align*}
			g(\Ric_L(\omega ),\omega )&=2R_{ij}{\omega }_{i,k}{\omega }_{j,k}-2{R_{ijkl}}{\omega }_{i,k}{\omega}_{j,l} \\ 
			& =2{R_{ij}}{{\omega }_{i,k}}{\omega}_{j,k}+2\left\langle \mathring{R}(\omega ),\omega  \right\rangle,
		\end{align*}
		elementary Lagrange multiplier argument we have
		\[\left\langle \mathring{R}(\omega ),\omega  \right\rangle\ge[\mathring{R},\frac{n^2+n-4}{n^2+n-2},1]|\omega|^2. \]
		According to Lemma~\ref{weight principle} and (\ref{Ricci1}), it follows that  
		\begin{align*}
			g(\Ric_L(\omega ),\omega)\ge&2[\mathring{R},\frac{n}{n+2},n-1]|\omega|^2+2[\mathring{R},\frac{n^2+n-4}{n^2+n-2},1]|\omega|^2\\
			\ge&2[\mathring{R},\frac{2(n^2-2)}{n^2+n-2},n]|\omega|^2.
		\end{align*}
		
		For $p=2$. If a harmonic symmetric $(2, 2)$ double form $\omega$ on $M$ satisfies the first
		Bianchi identity and its first contraction vanish, it is easy to see from \eqref{eq:Sijkl-total-norm} that 
		\begin{equation}\label{eq:Sijkl-total-norm1}
			\sum_{I,J}|S^{I,J}|^2
			=
			8(n-1)|\omega|^2.
		\end{equation}
		From \eqref{eq:omega-diagonal} and \eqref{eq:key-weight}, we get
		\begin{equation}\label{eq:omega-diagonal1}
			\sum_{I,J}\bigl((S^{I,J})_\alpha\bigr)^2	\leq\frac{32}{3}|\omega|^2.
		\end{equation}
		By the weighted principle and Proposition 3.1,  from \eqref{eq:Sijkl-total-norm1} and \eqref{eq:omega-diagonal1} we have 
		\[
		g(\Ric_L({\omega}),\omega)
		\geq
		\left[
		\mathring R,
		\frac{16}{3},
		4(n-1)
		\right]|\omega|^2.
		\]
		
		For $p\ge 3$, it follows from (\ref{eq:Sijkl-total-norm}) and (\ref{eq:omega-diagonal2}),
		\[
		g(\Ric_L({\omega}),\omega)
		\geq
		\left[
		\mathring R,
		4p,
		2p(n-p+1)
		\right]|\omega|^2.
		\]
		Hence $g(\Ric_L(\omega ),\omega )\ge 0$ if $\mathring{R}$ is $C(p,n)$-nonnegative.						
		The conclusion follows from maximum principle and the Bochner technique.
	\end{proof}
	
	We present an application of Theorem A to  manifolds with harmonic  curvature or harmonic Weyl tensor, thereby extending the result of \cite{DF24, DFY24, FL1, FLD}.
	\begin{corollary}(Corollary 1.1). Let $(M, g)$ be an $n(\ge 4)$-dimensional complete Riemannian
		manifold with harmonic curvature. If the curvature operator of the second kind $\mathring{R}$ is $\frac{3(n-1)}{4}$-nonnegative, then $M$ must be of constant curvature.
	\end{corollary}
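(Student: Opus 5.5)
We apply Theorem A with $p=2$ to the Weyl tensor, regarded as a symmetric $(2,2)$ double form $\omega=W$. By construction $W$ satisfies the first Bianchi identity and is totally trace-free, so $\boldsymbol{c}W=0$. It remains to see that $W$ is harmonic for the Lichn\'erowicz Laplacian on $(2,2)$ double forms (the one with $c=\frac12$ appropriate to curvature tensors).

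\textbf{Harmonicity of $W$.} Harmonic curvature means $\delta R=0$, which by the contracted second Bianchi identity forces constant scalar curvature and Codazzi Ricci tensor. Hence the Schouten tensor is Codazzi, its Kulkarni--Nomizu part is divergence free, and $\delta W=0$. Since $W$ also satisfies the second Bianchi identity ($dW=0$ when the Cotton tensor vanishes), $W$ is closed and coclosed as a vector valued $2$-form. The Weitzenb\"ock formula for $d\delta+\delta d$ on such forms coincides with $\nabla^*\nabla+\frac12\Ric_L$, so $\Delta_L W=0$.

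\textbf{Bochner argument.} With $p=2$, $\boldsymbol{c}W=0$ and the Bianchi identity, the proof of Theorem A gives
\[
g(\Ric_L W,W)\ge\left[\mathring R,\tfrac{16}{3},4(n-1)\right]|W|^2 .
\]
By Lemma~\ref{Lem 3.4} this is nonnegative when $\mathring R$ is $\frac{3(n-1)}{4}$-nonnegative. Since $\Delta_L W=0$, we then have
\[
\tfrac12\Delta|W|^2=|\nabla W|^2+\tfrac12 g(\Ric_L W,W)\ge 0 ,
\]
so $|W|^2$ is subharmonic.

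\textbf{Main obstacle: completeness.} $M$ is only assumed complete, not closed, so the integration step of Theorem A is unavailable. We handle this as follows. First, $\frac{3(n-1)}{4}$-nonnegativity of $\mathring R$ implies $\Ric\ge0$: by the estimate (\ref{Ricci1}), $\Ric\ge[\mathring R,\frac{n}{n+2},n-1]$, and $\frac{(n-1)(n+2)}{n}\ge\frac{3(n-1)}{4}$. Next, harmonic curvature makes $g$ real analytic in harmonic coordinates, so $\nabla W=0$ on an open set propagates to all of $M$. Either $M$ is compact, or we combine subharmonicity with a Yau-type Liouville theorem, or integrate against cutoff functions using $\Ric\ge0$; with harmonic curvature one may also invoke the rigidity results of \cite{DF24,FLD}. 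Either route yields $\nabla W=0$ and $g(\Ric_L W,W)=0$.

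\textbf{Conclusion.} Suppose $W\ne0$. The equality case of the weight principle forces the lowest eigenvalues of $\mathring R$ to vanish, and a parallel nonzero Weyl tensor forces $M$ to be locally symmetric (not conformally flat). Classification of locally symmetric spaces with $\mathring R$ $\frac{3(n-1)}{4}$-nonnegative (products and compact irreducible symmetric spaces, following \cite{NPW23,Li22}) excludes everything except spaces with $W=0$, a contradiction. Hence $W=0$. Then $M$ is conformally flat with harmonic curvature, so its Ricci tensor is Codazzi with constant trace. The $p=1$ case of Theorem A applied to $\Riczero$, or the standard result that $\Ric\ge0$ with these properties forces $\nabla\Ric=0$, gives that $M$ is Einstein. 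A conformally flat Einstein manifold has constant curvature.
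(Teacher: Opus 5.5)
Your harmonicity argument for $W$ and the bound $g(\Ric_L W,W)\ge[\mathring R,\frac{16}{3},4(n-1)]|W|^2\ge0$ match what the paper uses. The step you call the main obstacle, however, is left unresolved, and none of the routes you list closes it:
\begin{itemize}
\item Yau's Liouville theorem needs $|W|^2\in L^p$ for some $p>1$, which you do not have. Bounded nonnegative subharmonic functions on complete manifolds with $\Ric\ge0$ need not be constant, e.g.\ $\max(1-|x|^{2-n},0)$ on $\mathbb{R}^n$, $n\ge3$.
\item A cutoff argument needs growth control of $\int|W|^2$ on annuli.
\item Real analyticity only propagates $\nabla W=0$ once it already holds on an open set.
\item You cannot invoke \cite{DF24,FLD}, since this corollary is stated as an improvement of those results.
\end{itemize}
The missing idea is to prove that $M$ is compact. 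Harmonic curvature makes $s$ constant. Since $\frac{3(n-1)}{4}<N=\frac{(n-1)(n+2)}{2}$, the pinching gives $\operatorname{tr}\mathring R=\frac{n+2}{2n}s\ge0$, and $s=0$ forces all eigenvalues of $\mathring R$ to vanish, so $M$ is flat. If $s>0$, the paper invokes \cite[Prop.~4.1]{Li24} for the uniform bound $\Ric\ge\frac{s}{n(n+1)}>0$. Then $M$ is closed by Myers' theorem, and the closed-manifold Bochner argument gives $\nabla W=0$.

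The endgame also has gaps. In the locally symmetric case, ``classification \dots\ excludes everything'' is not an argument. The paper instead:
\begin{itemize}
\item uses local irreducibility \cite[Thm.~1.8]{Li24} to get an Einstein metric;
\item applies \cite[Thm.~B]{NPW23} to get a rational homology sphere;
\item applies Wolf's classification, handling $SU(3)/SO(3)$ via \cite[Ex.~4.5]{NPW23}.
\end{itemize}
In the conformally flat case, the $p=1$ case of Theorem A is not available in general. For $n\ge6$ one has $C(1,n)=\frac{n(n-1)(n+2)}{2(n^2-2)}<\frac{3(n-1)}{4}$, so your hypothesis does not imply $C(1,n)$-nonnegativity. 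Even when it does, nonnegativity only gives $\nabla\Riczero=0$.

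Your alternative, the cubic estimate for $\langle\Delta E,E\rangle$ under $\Ric\ge0$, is the paper's Case I. It needs compactness, and $\nabla\Ric=0$ yields Einstein only after a local product splitting is excluded. Consider $S^{n-1}\times\mathbb{R}$: it is complete, has parallel curvature, $W=0$, $\Ric\ge0$ and constant $s>0$. It satisfies everything your final step uses, yet is not of constant curvature. The paper rules this out with the local irreducibility result of \cite{Li24}, which genuinely uses the $\frac{3(n-1)}{4}$-pinching rather than just $\Ric\ge0$.
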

	\begin{corollary}(Corollary 1.2). Let $(M, g)$ be an $n(\ge 4)$-dimensional complete Riemannian
		manifold with harmonic Weyl tensor. If the curvature operator of the second kind $\mathring{R}$ is$\frac{3(n-1)}{4}$-nonnegative and the Weyl tensor of $(M, g)$ has bounded norm, then M is either globally
		conformally equivalent to a space of positive constant curvature or is isometric to a flat
		manifold.
	\end{corollary}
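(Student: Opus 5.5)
The statement is Corollary 1.2 for harmonic Weyl tensor. I would reduce it to Theorem A with $p=2$, applied to $\omega=W$, the Weyl tensor viewed as a symmetric $(2,2)$ double form. The Weyl tensor satisfies the first Bianchi identity and $\boldsymbol{c}W=0$, so the algebraic hypotheses hold. The pointwise estimate from the proof of Theorem A then gives
\[
g(\Ric_L(W),W)\ge \Big[\mathring R,\tfrac{16}{3},4(n-1)\Big]|W|^2\ge 0,
\]
since $\mathring R$ is $\frac{3(n-1)}{4}$-nonnegative and $\frac{4(n-1)}{16/3}=\frac{3(n-1)}{4}$. This estimate uses only the algebraic properties of $\omega$.

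Next I would use the harmonicity of $W$. If $\delta W=0$, the second Bianchi identity for $W$ fails only by Cotton-tensor terms, and those vanish when $W$ is harmonic. Hence $dW=0$ as well, so $W$ is harmonic as a double form. This is the standard fact behind the Weitzenböck formula $\nabla^*\nabla W+\frac12\Ric_L(W)=0$ used in \cite{FL1,DFY24}. The Bochner identity is then
\[
\tfrac12\Delta|W|^2=|\nabla W|^2+\tfrac12 g(\Ric_L(W),W)\ge |\nabla W|^2.
\]
If $M$ were closed, the maximum principle would give $\nabla W=0$ directly.

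$M$ is only complete, so the main obstacle is the globalization step. First I would show that $M$ must be closed or $W\equiv0$. The $\frac{3(n-1)}{4}$-nonnegativity, with $\frac{3(n-1)}{4}<n-1$, forces nonnegative Ricci curvature, since $\Ric$ is a weighted sum $[\mathring R,\frac{n}{n+2},n-1]$ as in (\ref{Ricci1}). Then $|W|^2$ is a bounded subharmonic function. I would refine the Bochner inequality with a Kato inequality $|\nabla W|^2\ge(1+\epsilon)|\nabla|W||^2$ for harmonic Weyl tensors, which gives subharmonicity of $|W|^{\beta}$ for some $\beta<1$. A Yau-type Liouville argument, multiplying by cutoff functions and integrating with volume growth controlled by Bishop–Gromov, then yields that $|W|$ is constant and $\nabla W=0$. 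Here the bounded-norm hypothesis is essential, and I expect this integral step to be the most delicate part.

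Finally I would treat the two cases of a parallel Weyl tensor. If $W\equiv0$, $M$ is locally conformally flat with harmonic Weyl tensor, so the Schouten tensor is Codazzi. The nonnegativity of $\mathring R$ together with the classification of Tani/Schoen–Yau for locally conformally flat manifolds with $\Ric\ge0$ then gives either a space-form quotient, which is globally conformal to the round sphere when $\Scal>0$, or a flat manifold. If $W$ is parallel and nonzero, Roter/Derdzinski–Roter type results force $M$ to be locally symmetric. I would then check case by case that no nonflat, non-constant-curvature symmetric space has $\frac{3(n-1)}{4}$-nonnegative $\mathring R$, using the known eigenvalues of $\mathring R$ on products and compact symmetric spaces from \cite{Li22,NPW23}. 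That contradiction reduces everything to the conformally flat case.
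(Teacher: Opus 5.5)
Your pointwise part matches the paper: the $p=2$ estimate from Theorem A applies to $W$ (first Bianchi identity, $\boldsymbol{c}W=0$), $\delta W=0$ forces the Cotton tensor to vanish so that $W$ also satisfies the second Bianchi identity, and $\frac12\Delta|W|^2\ge|\nabla W|^2$.

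\textbf{The gap is the globalization step.} No Liouville argument can make the subharmonic function $|W|^2$ (or $|W|^{\beta}$ after a refined Kato inequality) constant using only $\Ric\ge0$ and boundedness.
\begin{itemize}
\item With Bishop--Gromov, the cutoff argument gives only $\int_{B_R}|\nabla f|^2\le CR^{-2}\int_{B_{2R}}f^2\le C(\sup f)^2R^{n-2}$. This does not tend to zero for $n\ge3$.
\item This is not a technical defect. On $\mathbb{R}^n$ with $n\ge3$, the function $h=2-(1+|x|^2)^{(2-n)/2}$ is positive, bounded, nonconstant and subharmonic. The power $f=h^k$ even satisfies $f\Delta f\ge\frac{k-1}{k}|\nabla f|^2$, which is exactly the improved inequality your Kato step would produce.
\item Yau's Liouville theorems need positive harmonic functions or $L^p$ subharmonic functions, and you have neither.
\item Omori--Yau only controls $\nabla W$ along a maximizing sequence. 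To conclude it would need $g(\Ric_L(W),W)\ge c|W|^2$ with $c>0$, whereas the hypothesis only gives $\ge0$.
\end{itemize}

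\textbf{The missing idea} is to prove first, from the curvature hypothesis alone, that $M$ is compact (or flat), and only then apply the maximum principle. The paper does this as follows.
\begin{itemize}
\item The $\frac{3(n-1)}{4}$-nonnegativity of $\mathring R$ gives, via \cite[Proposition 4.1]{Li24}, the Ricci pinching $\Ric\ge\frac{\scal}{n(n+1)}g\ge0$.
\item With harmonic curvature, $\scal$ is constant, so $M$ is flat or compact by Myers.
\item With harmonic Weyl tensor and nonconstant $\scal$, the paper invokes \cite[Corollary 1.1]{LW} to get compactness. It does not run a Liouville argument.
\item On the closed manifold, the maximum principle gives $\nabla W=0$. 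The Derdzi\'nski--Roter dichotomy then proceeds roughly as you sketch. In the locally symmetric case the paper uses local irreducibility, the Einstein condition, Theorem B of \cite{NPW23} and Wolf's classification, rather than a case-by-case check.
\end{itemize}

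\textbf{A smaller omission.} The complete locally conformally flat manifolds with $\Ric\ge0$ (Carron--Herzlich, Zhu) also include quotients of $\mathbb{R}\times S^{n-1}$. These must be excluded explicitly, and the paper does so with the local irreducibility result \cite[Theorem 1.8]{Li24}.
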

	\begin{proof}[Proof of Corollary 1.1]
		Riemannian curvature tensor  is divergence free if and only if its Ricci tensor is a Codazzi tensor, and in this case its scalar curvature is constant. Hence the traceless Ricci tensor is harmonic because its Ricci tensor is divergence free. Denote by $E$ its traceless Ricci tensor and by $s$ its scalar curvature.
		
		If $s\equiv0$, then $M$ is flat since $\mathring{R}$ is $\frac{3(n-1)}{4}$-nonnegative.	Otherwise, $s=constant>0$,   i.e.,   $M$ is non-flat. By part $(2)$ of Proposition 4.1 in \cite{Li24}, we have that $Ric(M)\geq\frac{s}{n(n+1)}>0$ because  $\mathring{R}$ with $\frac{3(n-1)}{4}$-nonnegative is $n$-positive. So $M$ is compact by Myers Theorem.
		
		For manifolds with harmonic weyl tensor,  we have
		\[\frac{1}{2}\Delta \left| W \right|^2=\left| \nabla W \right|^2+\frac12g (\Ric_{L} (W),W) \ge 0. \]
		By the maximum principle, from the above we have $\nabla W=0$. According to Derdziński
		and Roter’s result \cite{DR} that Riemannian manifolds with  $\nabla W=0$ are either
		locally conformally flat or locally symmetric, we see that $(M, g)$ is either locally
		conformally flat or locally symmetric.
		Case I, $M$ is locally conformally flat with positive constant scalar curvature. Let ${a_i}$ be the eigenvalue of Ricci tensor, then we obtain that
		\begin{align}\label{laplacian1}
			\left\langle\Delta E,E \right\rangle &=E_{ij}E_{ij,tt}=E_{ij}E_{it,jt}\nonumber\\
			&=E_{ij}(E_{it,tj}+R_{jtsi}E_{st}+R_{jtst}E_{is})\nonumber\\
			&=\frac{n}{n-2}R_{ij}E_{it}E_{jt}-\frac{s}{(n-2)(n-1)}E_{ij}E_{ij}\nonumber\\
			&=\frac{n}{n-2}R_{ij}R_{jt}R_{ti}-\frac{(2n-1)s}{(n-1)(n-2)}R_{ij}R_{ji}+\frac{s^3}{(n-1)(n-2)}\nonumber\\
			&=\frac{n}{n-2}\sum a_i^3-\frac{(2n-1)s}{(n-1)(n-2)}\sum a_i^2+\frac{s^3}{(n-1)(n-2)}\nonumber\\
			&=\frac{n}{n-2}\left(\frac{(2n-1)s}{n(n-1)}\right)^3\left[\sum\left(\frac{n(n-1)a_i}{(2n-1)s}\right)^3-\sum\left(\frac{n(n-1)a_i}{(2n-1)s}\right)^2\right]\nonumber\\
			&+\frac{s^3}{(n-1)(n-2)}.\\\nonumber
		\end{align}		
		It follows from \cite[Lemma 3.3]{FL} and (\ref{laplacian1}) that the minimal points of $\left\langle\Delta E,E \right\rangle$ are $(\frac{s}{n}, \frac{s}{n},\cdots, \frac{s}{n})$ and $(0, \frac{s}{n-1},\cdots, \frac{s}{n-1})$, and that $\left\langle\Delta E,E \right\rangle\geq0$.  Hence  from \begin{equation*}\label{laplacian}
			\frac{1}{2}\Delta \left| E \right|^2=\left| \nabla E \right|^2+\left\langle\Delta E,E \right\rangle,
		\end{equation*} we deduce that the trace-less Ricci tensor $E$ is parallel. 
		Since $(M, g)$ is  locally irreducible by Theorem 1.8 in \cite{Li24}, it follows that $(M, g)$ is isometric to a space of positive constant curvature.
		
		Case II, $M$ is locally symmetric with positive constant scalar curvature. Since $(M, g)$ is  locally irreducible by Theorem 1.8 in \cite{Li24}, $(M, g)$ is an Einstein manifold.		It follows from Theorem B of \cite{NPW23} that $M$ is a rational homology sphere. Wolf \cite[Theorem 1]{Wo} provides a complete classification of compact symmetric spaces that are rational homology spheres. Aside from spheres themselves, the only simply connected example is the space $SU(3)/SO(3)$, which is shown in \cite[Example 4.5]{NPW23} to be $9$-positive. It then follows that the manifold must have positive constant curvature. \\
	\end{proof}
	
	\begin{proof}[Proof of Corollary 1.2]
		The difference between the proofs of Corollaries 1.1 and 1.2 arises from the fact that the scalar curvature in Corollary 1.2 is not constant; the remaining parts of the proof are identical. The only difference occurs in the case where $(M, g)$
		is locally conformally flat.

		From the proof of Corollary 1.1,  $Ric(M)\geq\frac{s}{n(n+1)}\geq0$. Based on Corollary 1.1 in \cite{LW}, we obtain that $(M, g)$ is compact. It follows from the work of Chow \cite{C} that there exists a conformal metric $\widetilde{g}$  of 
		$g$ such that the scalar curvature $s_{\widetilde{g}}$ is constant and the Ricci curvature $Ric_{\widetilde{g}}$  is nonnegative. And $(M, \widetilde{g})$
		is locally conformally flat. From the proof of Case I in Corollary 1.1, $(M, g)$ is globally
		conformally equivalent to a space of positive constant curvature. An alternative proof is given as follows.
		
		If $(M, g)$ is locally conformally flat, then by the classification theorem in \cite{CH,Z} and the fact that $(M, g)$ is locally irreducible, it follows that $(M, g)$ is globally conformally equivalent to a space of positive constant curvature.\\\\
	\end{proof}
	\section{A Bochner formula for $(p,q)$ double form}
	In this section, we give the following Bochner formula for $(p,q)$ double form in terms of the curvature
	operator of the second kind.
	\begin{proposition}\label{proposition1}
		let $\omega$ be a $(p,q)$-double form. With respect to an orthonormal basis, the curvature term in the Bochner formula satisfies
		\[\begin{aligned}
			\frac{3}{2}g(\Ric_{L}(\omega),\omega)=&g(\mathring{R}({\omega}^{S_0^2}),{\omega }^{S_0^2})+\left( p-\frac{2(p+q)p}{n} \right)R_{ij}{\omega }_{iI',J}{\omega }_{jI',J}\\
			&+\left( q-\frac{2(p+q)q}{n} \right)R_{ij}{\omega }_{I,iJ'}{\omega }_{I,jJ'}+\frac{(p+q)^2}{n^2}scal\left| \omega  \right|^2\\
			&-2pqR_{kijl}{{\omega }_{iI',lJ'}}{\omega }_{jI',kJ'}-2pqR_{ijkl}{\omega }_{iI',kJ'}{\omega }_{jI',lJ'}. 
		\end{aligned}\]
		In particular, if $\boldsymbol{c}\omega=0$ we have
		\begin{align}\label{pqbochner}
			\frac{3}{2}g(\Ric_L(\omega ),\omega )=&g(\mathring{R}({\omega}^{S_0^2}),{\omega }^{S_0^2})+\left( p-\frac{2(p+q)p}{n} \right)R_{ij}{\omega }_{iI',J}{\omega }_{jI',J}\nonumber\\
			&+\left( q-\frac{2(p+q)q}{n} \right)R_{ij}{\omega }_{I,iJ'}{\omega }_{I,jJ'}+\frac{(p+q)^2}{n^2}scal\left| \omega  \right|^2\\
			&+pq\sum_{I',J'}{g(\mathring{R}(W^{I',J'}),W^{I',J'})}\nonumber,
		\end{align}
		where
		$$W^{I',J'}=\sum_{i,j}\frac{1}{2}({\omega }_{iI',jJ'}+{\omega }_{jI',iJ'})e_i\odot e_j.$$
	\end{proposition}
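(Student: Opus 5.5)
The plan is to compute $g(\mathring{R}(\omega^{S_0^2}),\omega^{S_0^2})$ directly in an orthonormal frame and compare it with the expansion of $g(\Ric_L(\omega),\omega)$ obtained from (\ref{Bochner}). The comparison should produce the stated identity, up to terms that are pure Ricci or scalar contractions together with the two mixed terms carrying the factor $pq$.

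\textbf{Step 1: frame expression for the $\mathring R$-term.} Since $\bar R$ maps $S^2$ to $S^2$, I would write
\[
\langle \mathring R(\omega^{S_0^2}),\omega^{S_0^2}\rangle
=\langle \bar R(\omega^{S^2}),\omega^{S^2}\rangle-\text{(trace corrections)}.
\]
Here $\omega^{S^2}=\sum_{i\le j} (E_{ij}\omega)\otimes E_{ij}$, with $E_{ij}$ running over the orthonormal basis $\frac{1}{\sqrt2}e_i\odot e_j$, $\frac12 e_i\odot e_i$ of $S^2(V)$. The trace corrections come from the component along $g/\sqrt n$. Acting by $g$ on a $(p,q)$ form gives $(p+q)\omega$, so these corrections are exactly what produce the $\frac{2(p+q)}{n}$-modifications and the $\frac{(p+q)^2}{n^2}\scal|\omega|^2$ term. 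Concretely, $\bar R(g)=\Ric$ (up to sign convention), and $\langle \bar R g,g\rangle=\scal$.

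\textbf{Step 2: expand $\langle \bar R(\omega^{S^2}),\omega^{S^2}\rangle$.} In index form,
\[
\tfrac14\sum R_{kijl}\,\big((e_i\odot e_j)\omega\big)\big((e_k\odot e_l)\omega\big),
\]
where $(e_i\odot e_j)\omega$ replaces one slot index $j$ by $i$ and one slot index $i$ by $j$, summed over all $p+q$ slots. Expanding the product gives three kinds of terms.
\begin{itemize}
\item \emph{Same slot:} these contract to Ricci terms $R_{ij}\omega_{iI',J}\omega_{jI',J}$ and $R_{ij}\omega_{I,iJ'}\omega_{I,jJ'}$. They come with a coefficient that, combined with (\ref{Bochner}), I expect to leave the stated $p$ and $q$ multiples.
\item \emph{Two different slots in the same factor:} these produce $R_{ijkl}$-terms of the form appearing in (\ref{Bochner}). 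The symmetrization in $i\leftrightarrow j$ yields $R_{kijl}+R_{kjil}$. The pairing against the alternating slots of $\omega$, together with the first Bianchi identity for $R$ (not for $\omega$), converts these into $\frac32$ times the corresponding terms of $\Ric_L$. This is the origin of the factor $\frac32$, as in \cite{NPW23}.
\item \emph{One slot in each factor:} these produce the terms $R_{kijl}\omega_{iI',lJ'}\omega_{jI',kJ'}$ and $R_{ijkl}\omega_{iI',kJ'}\omega_{jI',lJ'}$. Unlike the same-factor case, they do not fully recombine with the mixed term of (\ref{Bochner}). The leftover is exactly $-2pqR_{kijl}(\cdots)-2pqR_{ijkl}(\cdots)$.
\end{itemize}

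\textbf{Step 3: the case $\boldsymbol c\omega=0$.} I would define $W^{I',J'}$ as above and verify three facts.
\begin{itemize}
\item Its trace is $\sum_i\omega_{iI',iJ'}=(\boldsymbol c\omega)_{I',J'}=0$. Hence $W^{I',J'}\in S_0^2$ and $\mathring R$ may be replaced by $\bar R$.
\item Expanding $\langle \bar R W,W\rangle=\frac14\sum R_{kijl}(\omega_{iI',jJ'}+\omega_{jI',iJ'})(\omega_{kI',lJ'}+\omega_{lI',kJ'})$ gives four terms.
\item Under the curvature symmetries these four terms pair into $-2R_{kijl}\omega_{iI',lJ'}\omega_{jI',kJ'}-2R_{ijkl}\omega_{iI',kJ'}\omega_{jI',lJ'}$, up to normalization of $e_i\odot e_j$, which contributes a factor $4$ per pairing that must be tracked.
\end{itemize}
Multiplying by $pq$ and summing over $I',J'$ then gives (\ref{pqbochner}).

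\textbf{Main obstacle.} I expect the hardest part to be the bookkeeping of Step 2: tracking the combinatorial coefficients ($p$, $p(p-1)$, $pq$), the $\odot$-normalizations, and the signs from moving indices within alternating slots. The most delicate point is verifying that the same-factor cross terms produce precisely $\frac32$ times the $\Ric_L$ contribution. For that, I would first check the identity on the pure forms $(p,0)$, where it must reduce to the known formula of \cite{NPW23}. That check fixes all normalizations before I treat the mixed slots.
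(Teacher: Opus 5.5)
Your plan is correct and is essentially the paper's own proof. The paper also writes $\omega^{S_0^2}=\omega^{S^2}-\frac{p+q}{n}\,\omega\otimes g$ with $\bar R(g)=-\Ric$, and expands $g((e_i\bar\circ e_j)\omega,(e_k\bar\circ e_l)\omega)$ into exactly your three types of terms. It then uses the first Bianchi identity for $R$ to see that the same-factor terms are $\frac32$ times those of $\Ric_L$. It applies that identity once more to bring the mixed leftover $-pq(\omega_{iI',lJ'}\omega_{jI',kJ'}+\omega_{iI',kJ'}\omega_{jI',lJ'})R_{kijl}-3pqR_{ijkl}\omega_{iI',kJ'}\omega_{jI',lJ'}$ into the stated form, and finally identifies it with $pq\sum_{I',J'}\langle\mathring R(W^{I',J'}),W^{I',J'}\rangle$ using $\boldsymbol c\omega=0$.

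The only slip is the prefactor in your Step 3. With the stated normalization $W^{I',J'}=\sum_{i,j}A_{ij}\,e_i\bar\circ e_j$, where $A_{ij}=\omega_{iI',jJ'}+\omega_{jI',iJ'}$, one has $\langle\bar R(W^{I',J'}),W^{I',J'}\rangle=\sum_{i,j,k,l}R_{kijl}A_{ij}A_{kl}$ with no factor $\frac14$. This is exactly what makes the four terms pair to $-2R_{kijl}\omega_{iI',lJ'}\omega_{jI',kJ'}-2R_{ijkl}\omega_{iI',kJ'}\omega_{jI',lJ'}$.
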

	Proposition~\ref{proposition1} generalizes Proposition 2.1 in \cite{NPW23}. It can be regarded as the case of $(0,p+q)$-tensors in the remark 2.5 of \cite{NPW23} and we employ an approach analogous to that in \cite{NPW23} and split into the proofs of several propositions.
	
	\begin{proposition}\label{compute}
		For an orthonormal basis $\{e_1,\ldots, e_n\}$, set $e_i \bar{\circ} e_j = \frac{1}{2}(e_i \otimes e_j + e_j \otimes e_i)$. Then, every $(p,q)$-double form $\omega$ satisfies
		$$g((e_i\bar{\circ }e_j)\omega ,\omega )
		=p\sum\limits_{I',J}{{\omega }_{iI',J}{\omega }_{jI',J}}
		+q\sum\limits_{I,J'}{{\omega }_{I,iJ'}{\omega }_{I,jJ'}}$$
		and
		\begin{eqnarray*}
			&&g((e_i\bar{\circ }e_j)\omega ,(e_k\bar{\circ }e_l)\omega ) \\ 
			&=&\sum\limits_{I,J}{\frac{p}{4}({\delta }_{ik}{\omega }_{lI',J}{\omega }_{jI',J}}+{\delta }_{il}{\omega }_{kI',J}{\omega }_{jI',J}+{\delta }_{jk}{\omega }_{lI',J}{\omega }_{iI',J}+{\delta }_{jl}{\omega }_{kI',J}{\omega }_{iI',J})\\
			&&+\frac{p(p-1)}{2}({{\omega }_{iI',J}}{{\omega }_{jI',J}}+{{\omega }_{iI',J}}{{\omega }_{jI',J}}) \\ 
			&&+\sum\limits_{I,J}{\frac{q}{4}({\delta }_{ik}{\omega }_{I,lJ'}{{\omega }_{I,jJ'}}+{{\delta }_{il}}{{\omega }_{I,kJ'}}{{\omega }_{I,jJ'}}+{{\delta }_{jk}}{{\omega }_{I,lJ'}}{{\omega }_{I,iJ'}}+{{\delta }_{jl}}{{\omega }_{I,kJ'}}{{\omega }_{I,iJ'}})}\\
			&&+\frac{q(q-1)}{2}({\omega }_{I,iJ'}{{\omega }_{I,jJ'}}+{\omega }_{I,iJ'}{{\omega }_{I,jJ'}}) \\ 
			&&+\frac{pq}{2}({\omega }_{li_2....i_p,iJ'}{\omega }_{kI',jJ'}+{{\omega }_{kI',iJ'}}{{\omega }_{lI',jJ'}}+{\omega }_{iI',lJ'}{\omega }_{jI',kJ'}+{\omega }_{iI',kJ'}{\omega }_{jI',lJ'}).
		\end{eqnarray*}
	\end{proposition}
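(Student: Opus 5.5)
The plan is to compute directly from the definition of the action of a symmetric endomorphism on $(0,p+q)$-tensors, treating $\omega$ as a tensor of rank $p+q$. For $S=e_i\bcirc e_j$ we have $Se_m=\frac12(\delta_{jm}e_i+\delta_{im}e_j)$. Then
\[
(S\omega)(X_1,\dots,X_{p+q})=\sum_{a=1}^{p+q}\omega(X_1,\dots,SX_a,\dots,X_{p+q}).
\]
In components this gives $(S\omega)_{I,J}$ as a sum of $p+q$ terms. In the $a$-th term the index in slot $a$ is replaced: if that index equals $j$ it becomes $i$ (weight $\frac12$), and if it equals $i$ it becomes $j$ (weight $\frac12$). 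We will separate the $p$ terms acting on the first block of $\omega$ from the $q$ terms acting on the second block.

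\textbf{First identity.} We pair $S\omega$ with $\omega$ and sum over all $I,J$. Each of the $p$ first-block terms contributes the same amount after we relabel the slot to the front using the skew-symmetry of $\omega$ in its first $p$ arguments; the sign appears twice and cancels. The two halves, corresponding to $i\leftrightarrow j$, are equal by renaming the summation index. This yields $p\sum_{I',J}\omega_{iI',J}\omega_{jI',J}$. The second block gives $q\sum_{I,J'}\omega_{I,iJ'}\omega_{I,jJ'}$ in the same way.

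\textbf{Second identity.} Here we expand $g((e_i\bcirc e_j)\omega,(e_k\bcirc e_l)\omega)$ as a double sum over slot pairs $(a,b)$, where $S$ acts in slot $a$ and $S'=e_k\bcirc e_l$ acts in slot $b$. We split the pairs into four classes:
\begin{enumerate}
\item $a=b$ in the first block ($p$ pairs). Summing over the index in that slot produces the Kronecker deltas $\delta_{ik},\delta_{il},\delta_{jk},\delta_{jl}$ with total weight $\frac p4$.
\item $a\neq b$ both in the first block ($p(p-1)$ ordered pairs). Moving both slots to the front gives products $\omega_{\cdot\cdot I'',J}\omega_{\cdot\cdot I'',J}$ with indices distributed as in the stated formula, which has the first-block indices $i,j,k,l$ placed in two front slots.
\item The analogous two classes for the second block, giving the $\frac q4$ and $\frac{q(q-1)}2$ terms.
\item Mixed pairs, one slot in each block ($2pq$ ordered pairs, counting both orders). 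No delta appears, and after moving the slots to the front we get the four terms of the form $\omega_{\cdot I',\cdot J'}\omega_{\cdot I',\cdot J'}$ with coefficient $\frac{pq}{2}$.
\end{enumerate}

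\textbf{Main obstacle.} The difficulty is pure bookkeeping: tracking signs under slot permutations and the $\frac12\cdot\frac12$ normalizations. We will check signs by noting that any permutation of slots within a block is applied identically to both factors, so it contributes $(\pm1)^2=1$. We will check normalizations by verifying the trace. Contracting the first identity with $\delta_{ij}$ must give $(p+q)|\omega|^2$, which is the expected value since the identity endomorphism acts on $\omega$ as multiplication by $p+q$.

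We also suspect that the displayed $\frac{p(p-1)}2$ and $\frac{q(q-1)}2$ terms contain typographical duplications. The correct terms should involve the index patterns $\omega_{ik I'',J}\omega_{jl I'',J}$ and permutations of these. We will derive them honestly from the slot-pair expansion rather than match the printed form.
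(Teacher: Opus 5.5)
Your proposal is correct and is essentially the paper's direct computation. The paper first writes $g((e_i\bar{\circ}e_j)\omega,\Omega)$ for an arbitrary $(p,q)$-double form $\Omega$, then substitutes the components of $(e_k\bar{\circ}e_l)\omega$ split into the same-slot, same-block and other-block contributions, which is exactly your slot-pair classification. You are also right that the printed $\frac{p(p-1)}{2}$ and $\frac{q(q-1)}{2}$ terms are typos: the paper's own derivation ends with $\omega_{ilI'',J}\omega_{jkI'',J}+\omega_{ikI'',J}\omega_{jlI'',J}$ and its second-block analogue $\omega_{I,ilJ''}\omega_{I,jkJ''}+\omega_{I,ikJ''}\omega_{I,jlJ''}$, which is what your expansion produces.
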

	\begin{proof}
		Direct calculation yields that, for any $(p,q)$-double form
		$$\begin{aligned}
			& g((e_i\bar{\circ }e_j)\omega ,\Omega ) \\ 
			& =\frac{p}{2}\sum\limits_{I,J}{({{\omega }_{iI',J}}{{\Omega }_{jI',J}}+{{\omega }_{jI',J}}{{\Omega }_{iI',J}})}+\frac{q}{2}\sum\limits_{{}}{({{\omega }_{I,iJ'}}{{\Omega }_{I,jJ'}}+{{\omega }_{I,jJ'}}{{\Omega }_{I,iJ'}})}.
		\end{aligned}$$
		For the second part, notice that
		\begin{align*}
			g{{(({{e}_{k}}\bar{\circ }{{e}_{l}})\omega )}_{ii_2...i_p,J}}=&\frac{1}{2}({{\delta }_{ik}}{{\omega }_{li_2...i_p,J}}+{{\delta }_{il}}{{\omega }_{ki_2...i_p,J}})+\frac{1}{2}\sum\limits_{\alpha =2..p}{({{\delta }_{k{{i}_{\alpha }}}}{{\omega }_{ii_2...l...i_p,J}}+{{\delta }_{l{{i}_{\alpha }}}}{{\omega }_{ii_2...k...i_p,J}})}\\
			&+\frac{1}{2}\sum\limits_{\alpha =1..q}{({{\delta }_{k{{j}_{\alpha }}}}{{\omega }_{ii_2...i_p,j_1...l...j_q}}+{{\delta }_{l{{j}_{\alpha }}}}{{\omega }_{ii_2...i_p,j_1...k...j_q}})},
		\end{align*}
		\begin{align*}
			g{{(({{e}_{k}}\bar{\circ }{{e}_{l}})\omega )}_{I,ij_2...j_q}}=&\frac{1}{2}({{\delta }_{ik}}{{\omega }_{I,lj_2...j_q}}+{{\delta }_{il}}{{\omega }_{I,kj_2...j_q}})+\frac{1}{2}\sum\limits_{\alpha =2..q}{({{\delta }_{k{{j}_{\alpha }}}}{{\omega }_{I,ij_2...l...j_q}}+{{\delta }_{l{{j}_{\alpha }}}}{{\omega }_{I,ij_2...k...j_q}})}\\
			&+\frac{1}{2}\sum\limits_{\alpha =1..p}{({{\delta }_{k{{i}_{\alpha }}}}{{\omega }_{i_1...l...i_p,ij_2...j_q}}+{{\delta }_{l{{i}_{\alpha }}}}{{\omega }_{i_1...k...i_p,ij_2...j_q}})},
		\end{align*}
		hence
		\begin{eqnarray*}
			&&g(({{e}_{i}}\bar{\circ }{{e}_{j}})\omega ,({{e}_{k}}\bar{\circ }{{e}_{l}})\omega )) \\ 
			&=&\frac{p}{2}\left( \sum\limits_{I,J}{{{\omega }_{ji_2...i_p,J}}(\frac{1}{2}({{\delta }_{ik}}{{\omega }_{li_2...i_p,J}}+{{\delta }_{il}}{{\omega }_{ki_2...i_p,J}})+\frac{1}{2}\sum\limits_{\alpha =2..p}{({{\delta }_{k{{i}_{\alpha }}}}{{\omega }_{ii_2...l...i_p,J}}+{{\delta }_{l{{i}_{\alpha }}}}{{\omega }_{ii_2...k...i_p,J}})}}\right.\\
			&&\left.+\frac{1}{2}\sum\limits_{\alpha =1..q}{({{\delta }_{k{{j}_{\alpha }}}}{{\omega }_{ii_2...i_p,j_1...l...j_q}}+{{\delta }_{l{{j}_{\alpha }}}}{{\omega }_{ii_2...i_p,j_1...k...j_q}})}) \right) \\ 
			&&+\frac{q}{2}\left( \sum\limits_{I,J}{{{\omega }_{I,jj_2...j_q}}(\frac{1}{2}({{\delta }_{ik}}{{\omega }_{I,lj_2...j_q}}+{{\delta }_{il}}{{\omega }_{I,kj_2...j_q}})+\frac{1}{2}\sum\limits_{\alpha =2..q}{({{\delta }_{k{{j}_{\alpha }}}}{{\omega }_{I,ij_2...l...j_q}}+{{\delta }_{l{{j}_{\alpha }}}}{{\omega }_{I,ij_2...k...j_q}})}}\right.\\
			&&\left.+\frac{1}{2}\sum\limits_{\alpha =1..p}{({{\delta }_{k{{i}_{\alpha }}}}{{\omega }_{i_1...l...i_p,ij_2...j_q}}+{{\delta }_{l{{i}_{\alpha }}}}{{\omega }_{i_1...k...i_p,ij_2...j_q}}))} \right)+i,j,index,change \\ 
			&=&\sum\limits_{I,J}{\frac{p}{4}({{\delta }_{ik}}{{\omega }_{li_2...i_p,J}}{{\omega }_{ji_2...i_p,J}}+{{\delta }_{il}}{{\omega }_{ki_2...i_p,J}}{{\omega }_{ji_2...i_p,J}})}\\
			&&+\frac{p}{4}\sum\limits_{\alpha =2..p}{({{\delta }_{k{{i}_{\alpha }}}}{{\omega }_{ii_2...l...i_p,J}}{{\omega }_{ji_2...i_p,J}}+{{\delta }_{l{{i}_{\alpha }}}}{{\omega }_{ii_2...k...i_p,J}}{{\omega }_{ji_2...i_p,J}})}\\
			&&+\frac{p}{4}\sum\limits_{\alpha =1..q}{({{\delta }_{k{{j}_{\alpha }}}}{{\omega }_{ii_2...i_p,j_1...l...j_q}}{{\omega }_{ji_2...i_p,J}}+{{\delta }_{l{{j}_{\alpha }}}}{{\omega }_{ii_2...i_p,j_1...k...j_q}}{{\omega }_{ji_2...i_p,J}})} \\ 
			&&+\sum\limits_{I,J}{\frac{q}{4}({{\delta }_{ik}}{{\omega }_{I,lj_2...j_q}}{{\omega }_{I,jj_2...j_q}}+{{\delta }_{il}}{{\omega }_{I,kj_2...j_q}}{{\omega }_{I,jj_2...j_q}})}\\
			&&+\frac{q}{4}\sum\limits_{\alpha =2..q}{({{\delta }_{k{{j}_{\alpha }}}}{{\omega }_{I,ij_2...l...j_q}}{{\omega }_{I,jj_2...j_q}}+{{\delta }_{l{{j}_{\alpha }}}}{{\omega }_{I,ij_2...k...j_q}}{{\omega }_{I,jj_2...j_q}})}\\
			&&+\frac{q}{4}\sum\limits_{\alpha =1..p}{({{\delta }_{k{{i}_{\alpha }}}}{{\omega }_{i_1...l...i_p,ij_2...j_q}}{{\omega }_{I,jj_2...j_q}}+{{\delta }_{l{{i}_{\alpha }}}}{{\omega }_{i_1...k...i_p,ij_2...j_q}}{{\omega }_{I,jj_2...j_q}})}+i,j,index,change \\ 
			&=&\sum\limits_{I,J}{\frac{p}{4}({{\delta }_{ik}}{{\omega }_{li_2...i_p,J}}{{\omega }_{ji_2...i_p,J}}+{{\delta }_{il}}{{\omega }_{ki_2...i_p,J}}{{\omega }_{ji_2...i_p,J}})}\\
			&&+\frac{p(p-1)}{4}({{\omega }_{il{{i}_{3}}......i_p,J}}{{\omega }_{jk{{i}_{3}}......i_p,J}}+{{\omega }_{ik{{i}_{3}}......i_p,J}}{{\omega }_{jl{{i}_{3}}......i_p,J}})\\
			&&+\frac{pq}{4}\sum\limits_{\alpha =1..q}{({{\omega }_{ii_2...i_p,...lj_2...j_q}}{{\omega }_{ji_2...i_p,kj_2...j_q}}+{\omega }_{ii_2...i_p,kj_2......j_q}{{\omega }_{ji_2...i_p,lj_2......j_q}})} \\
			&&+\sum\limits_{I,J}{\frac{q}{4}({{\delta}_{ik}}{{\omega }_{I,lj_2...j_q}}{{\omega }_{I,jj_2...j_q}}+{{\delta }_{il}}{{\omega }_{I,kj_2...j_q}}{{\omega }_{I,jj_2...j_q}})}\\
			&&+\frac{q(q-1)}{4}({{\omega }_{I,ilj_3...j_q}}{{\omega }_{I,jkj_3...j_q}}+{{\omega }_{I,ikj_3......j_q}}{{\omega }_{I,jlj_3...j_q}})\\
			&&+\frac{pq}{4}\sum\limits_{\alpha =1..p}{({{\omega }_{li_2...i_p,ij_2...j_q}}{{\omega }_{ki_2...i_p,jj_2...j_q}}+{{\omega }_{ki_2...i_p,ij_2...j_q}}{{\omega }_{li_2...i_p,jj_2...j_q}})}+i,j,index,change\\ 
		\end{eqnarray*}
		
		\begin{eqnarray*} 
			&=&\sum\limits_{I,J}{\frac{p}{4}({{\delta }_{ik}}{{\omega }_{lI',J}}{{\omega }_{jI',J}}+{\delta }_{il}{{\omega }_{kI',J}}{{\omega }_{jI',J}}+{\delta }_{jk}{\omega }_{lI',J}{\omega }_{iI',J}+{{\delta }_{jl}}{\omega }_{kI',J}{{\omega }_{iI',J}})}\\
			&&+\frac{p(p-1)}{2}({\omega }_{ilI'',J}{\omega }_{jkI'',J}+{\omega }_{ikI'',J}{\omega }_{jlI'',J}) \\ 
			&&+\sum\limits_{I,J}{\frac{q}{4}({{\delta }_{ik}}{{\omega }_{I,lJ'}}{{\omega }_{I,jJ'}}+{{\delta }_{il}}{{\omega }_{I,kJ'}}{{\omega }_{I,jJ'}}+{{\delta }_{jk}}{{\omega }_{I,lJ'}}{{\omega }_{I,iJ'}}+{{\delta }_{jl}}{{\omega }_{I,kJ'}}{{\omega }_{I,iJ'}})}\\
			&&+\frac{q(q-1)}{2}({{\omega }_{I,ilJ''}}{{\omega }_{I,jkJ''}}+{{\omega }_{I,ikJ''}}{{\omega }_{I,jlJ''}}) \\ 
			&&+\frac{pq}{2}({{\omega }_{lI',iJ'}}{{\omega }_{kI',jJ'}}+{{\omega }_{kI',iJ'}}{{\omega }_{lI',jJ'}}+{{\omega }_{iI',lJ'}}{{\omega }_{jI',kJ'}}+{{\omega }_{iI',kJ'}}{{\omega }_{jI',lJ'}}).
		\end{eqnarray*}					
	\end{proof}
	
	\begin{proposition}\label{proposition2}
		If $\omega$ is a $(p,q)$-double form, then 
		\begin{align*}
			g(\mathring{R}({\omega}^{S_0^2}),{\omega}^{S_0^2})=&(\frac{p}{2}+\frac{2p(p+q)}{n}){{R}_{ij}}{{\omega }_{iI',J}}{{\omega }_{jI',J}}-\frac{3}{2}\frac{p(p-1)}{2}{\omega }_{ijI'',J}{\omega }_{klI'',J}R_{ijkl}\\
			&+(\frac{q}{2}+\frac{2q(p+q)}{n})R_{ij}{\omega }_{I,iJ'}{\omega }_{I,jJ'}-\frac{3}{2}\frac{q(q-1)}{2}{\omega }_{I,ijJ''}{\omega }_{I,klJ''}R_{ijkl}\\
			&+pq({\omega }_{iI',lJ'}{\omega }_{jI',kJ'}+{\omega }_{iI',kJ'}{\omega }_{jI',lJ'})R_{kijl} \\  
			& -\frac{(p+q)^2}{n^2}scal\left| \omega  \right|^2.
		\end{align*}
	\end{proposition}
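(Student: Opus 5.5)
The plan is to expand $g(\mathring{R}(\omega^{S_0^2}),\omega^{S_0^2})$ in a fixed orthonormal eigenbasis-independent way, exactly as in \cite{NPW23}, Proposition 2.1, and then evaluate the resulting sums with Proposition~\ref{compute}. First, pass from $S_0^2(V)$ to all of $S^2(V)$. The orthonormal basis of $S^2(V)$ is $\{\sqrt2\, e_i\bcirc e_j\}_{i<j}\cup\{e_i\bcirc e_i\}$, and it splits as an orthonormal basis of $S_0^2(V)$ together with $\frac{1}{\sqrt n}g$. Since $g\,\omega=(p+q)\omega$ by the definition of the action of $S$ on tensors, the component of $\omega^{S^2}$ along $g$ is $\frac{p+q}{\sqrt n}\omega\otimes\frac{g}{\sqrt n}$. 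Also $\bar R(g)=\Ric$-type contraction, namely $\bar R(g)=\Ric$ with respect to the paper's convention, so $\langle\bar R(g),g\rangle=\scal$ up to normalization. We use
\[
\mathring{R}=\Pr_{S_0^2}\circ\bar R|_{S_0^2}.
\]
Writing $\omega^{S^2}=\omega^{S_0^2}+\frac{p+q}{n}\omega\otimes g$, we get
\[
g(\mathring R(\omega^{S_0^2}),\omega^{S_0^2})=g(\bar R(\omega^{S^2}),\omega^{S^2})-\frac{2(p+q)}{n}g\big(\bar R(\omega\otimes g),\omega^{S^2}\big)+\frac{(p+q)^2}{n^2}\langle\bar R g,g\rangle|\omega|^2 .
\]
The middle term is computed with the first identity of Proposition~\ref{compute}, since $g(\bar R(\omega\otimes g),\omega^{S^2})=\sum R_{ij}\,g((e_i\bcirc e_j)\omega,\omega)$ after using $\bar R(g)_{ij}=R_{ij}$. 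This produces exactly the corrections $\frac{2p(p+q)}{n}R_{ij}\omega_{iI',J}\omega_{jI',J}$ and $\frac{2q(p+q)}{n}R_{ij}\omega_{I,iJ'}\omega_{I,jJ'}$, with signs to be fixed carefully, and the last term gives $-\frac{(p+q)^2}{n^2}\scal|\omega|^2$ once combined with the cross term. I will have to check the sign bookkeeping, since the stated formula has $+\frac{2p(p+q)}{n}$, which suggests the projection terms combine as $-2\cdot(-)$ because $\langle\bar R(e_i\bcirc e_j),g\rangle=-R_{ij}$ under the convention $\bar R(e_i\odot e_j)=\sum R_{kijl}e_k\odot e_l$.

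Second, the main term is expanded in the full basis:
\[
g(\bar R(\omega^{S^2}),\omega^{S^2})=\sum_{i,j,k,l}R_{kijl}\,g\big((e_i\bcirc e_j)\omega,(e_k\bcirc e_l)\omega\big),
\]
up to the normalization constant coming from $\odot$ versus $\bcirc$. Into this we substitute the second identity of Proposition~\ref{compute} and contract against $R_{kijl}$ term by term. The $\delta$-terms contract $R_{kijl}$ to a Ricci tensor and give $\frac p2 R_{ij}\omega_{iI',J}\omega_{jI',J}$ and $\frac q2 R_{ij}\omega_{I,iJ'}\omega_{I,jJ'}$. The $p(p-1)$ terms give $R_{kijl}(\omega_{ilI''}\omega_{jkI''}+\omega_{ikI''}\omega_{jlI''})$. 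In these terms I will use antisymmetry of $\omega$ in its first slots together with the first Bianchi identity of $R$ (not of $\omega$). That rewrites $R_{kijl}\omega_{ilI''}\omega_{jkI''}$ in terms of $R_{ijkl}\omega_{ijI''}\omega_{klI''}$, via $R_{kijl}-R_{kjil}=R_{kl ij}$-type identities, and this yields the factor $-\frac32\cdot\frac{p(p-1)}{2}$. The same manipulation handles the $q(q-1)$ terms. The mixed $pq$ terms contain no antisymmetry linking $i$ to $l$ across the two factors, so they cannot be simplified and stay as $pq(\omega_{iI',lJ'}\omega_{jI',kJ'}+\omega_{iI',kJ'}\omega_{jI',lJ'})R_{kijl}$, after relabeling the first two of the four summands using the pair symmetry $R_{kijl}=R_{jlki}$.

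The main obstacle is the constant and sign bookkeeping. Three things must be handled consistently: the normalization $e_i\odot e_j=2e_i\bcirc e_j$, the fact that the basis vectors $e_i\bcirc e_j$ for $i\ne j$ have norm $1/\sqrt2$ (so summing over all ordered $i,j$ is the right trace), and the Bianchi reduction producing the $\frac32$. I would first verify the whole formula on the round sphere $R_{ijkl}=\delta_{ik}\delta_{jl}-\delta_{il}\delta_{jk}$ with $\omega$ a $(p,0)$ form. There the answer must match \cite[Proposition 2.1]{NPW23}, which fixes any overall factor.
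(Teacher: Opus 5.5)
Your proposal follows the paper's proof essentially step for step: split off the trace part via $\omega^{S^2}=\omega^{S_0^2}+\frac{p+q}{n}\omega\otimes g$, expand $g(\bar R(\omega^{S^2}),\omega^{S^2})=\sum R_{kijl}\,g((e_i\bcirc e_j)\omega,(e_k\bcirc e_l)\omega)$ with Proposition~\ref{compute}, reduce the same-side terms with the first Bianchi identity of $R$, and merge the mixed terms by pair symmetry. The one correction is your initial claim $\bar R(g)=\Ric$: under the paper's convention $\bar R(g)=-\Ric$, as you yourself note through $\langle\bar R(e_i\bcirc e_j),g\rangle=-R_{ij}$. Consequently the cross term gives $+\frac{2(p+q)}{n}(\cdots)$, the term $-\frac{(p+q)^2}{n^2}\scal|\omega|^2$ comes directly from $\langle\bar R g,g\rangle=-\scal$ without any combination with the cross term, and the main expansion carries no extra $\odot$ versus $\bcirc$ normalization factor.
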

	\begin{proof}
		According to \cite[Proposition 1.8]{NPW23}, we can consider $g(\bar{R}({\omega}^{S_0^2}),{\omega}^{S_0^2})$. For a $(p,q)$-double form $\omega$, since $\omega^{S_0^2}=\omega^{S^2}-\frac{p+q}{n}\omega \otimes g $ and $\bar{R}(g)=-Ric$, we have
		\begin{equation}\label{term1}
			g(\bar{R}({{\omega }^{{{S}_{0}^2}}}),{{\omega }^{{{S}_{0}^2}}})=g(\bar{R}({{\omega }^{{{S}^{2}}}}),{{\omega }^{{{S}^{2}}}})+\frac{2(p+q)}{n}g({{\omega }^{{{S}^{2}}}},\omega \otimes Ric)-\frac{(p+q)^2}{n^2}scal\left| \omega  \right|^2 .
		\end{equation}
		By Bianchi identity 
		\[{{\omega }_{ikI'',J}}{{\omega }_{jlI'',J}}R_{ijkl}={{\omega }_{ikI'',J}}{{\omega }_{jlI'',J}}R_{ilkj}+{{\omega }_{ikI'',J}}{{\omega }_{jlI'',J}}R_{ikjl}\]
		i.e.,
		\[{{\omega }_{ikI'',J}}{{\omega }_{jlI'',J}}R_{ijkl}=\frac{1}{2}{{\omega }_{ijI'',J}}{{\omega }_{klI'',J}}R_{ijkl}.\]
		This together with Proposition~\ref{compute} yields 
		\begin{align*}
			g(\bar{R}({\omega }^{S^2}),{\omega }^{S^2})=&\frac{1}{2}\sum\limits_{ijkl}{g((e_i\bar{\circ}e_j)\omega ,(e_k\bar{\circ }e_l)\omega ))(R_{kijl}+R_{kjil})} \\ 
			=&\frac{p}{2}R_{ij}{\omega }_{iI',J}{\omega }_{jI',J}-\frac{3}{2}\frac{p(p-1)}{2}{\omega }_{ijI'',J}{\omega }_{klI'',J}R_{ijkl}\\
			&+\frac{q}{2}R_{ij}{\omega }_{I,iJ'}{{\omega }_{I,jJ'}}-\frac{3}{2}\frac{q(q-1)}{2}{\omega }_{I,ijJ''}{\omega }_{I,klJ''}{{R}_{ijkl}}\\
			&+\frac{pq}{4}({{\omega }_{iI',lJ'}}{{\omega }_{jI',kJ'}}
			+{{\omega}_{iI',kJ'}}{{\omega }_{jI',lJ'}})(R_{kijl}+R_{kjil}) \\ 
			&+\frac{pq}{4}({{\omega }_{lI',iJ'}}{{\omega }_{kI',jJ'}}+{{\omega }_{kI',iJ'}}{{\omega }_{lI',jJ'}})(R_{kijl}+R_{kjil})\\
			=&\frac{p}{2}R_{ij}{\omega }_{iI',J}{\omega }_{jI',J}-\frac{3}{2}\frac{p(p-1)}{2}{\omega }_{ijI'',J}{\omega }_{klI'',J}R_{ijkl}\\
			&+\frac{q}{2}R_{ij}{\omega }_{I,iJ'}{\omega }_{I,jJ'}-\frac{3}{2}\frac{q(q-1)}{2}{\omega }_{I,ijJ''}{\omega }_{I,klJ''}R_{ijkl}\\
			&+pq({{\omega }_{iI',lJ'}}{{\omega }_{jI',kJ'}}+{{\omega}_{iI',kJ'}}{{\omega }_{jI',lJ'}})R_{kijl}.
		\end{align*}
		
		Furthermore, by using the same method as in \cite[proposition 2.3]{NPW23}, we obtain
		\begin{equation*}\label{Ricci}
			g({{\omega}^{S^2}},\omega \otimes \Ric)=p\sum\limits_{i,j}{\sum\limits_{I',J}{R_{ij}{\omega }_{iI',J}{\omega }_{jI',J}}}+q\sum\limits_{i,j}{\sum\limits_{I,J'}{R_{ij}{\omega }_{I,iJ'}{\omega }_{I,jJ'}}}.
		\end{equation*}
		Substituting the above two expressions into (\ref{term1}) completes the proof.
	\end{proof}
	\begin{proof}[\textbf{Proof of proposition~\ref{proposition1}}]
		From equation (\ref{Bochner}), we obtain
		\begin{align} 
			g(\Ric_L(\omega),\omega)&= \sum_{a=1}^p R_{i_a}{}^k \omega_{i_1 \dots \underset{(a)}{k} \dots i_p, \, J}\omega_{I,J} + \sum_{b=1}^q R_{j_b}{}^k \omega_{I, \, j_1 \dots \underset{(b)}{k} \dots j_q}\omega_{I,J} \nonumber\\ 
			&\quad - 2 \sum_{1 \le a < b \le p} R_{i_a}{}^m{}_{i_b}{}^n \omega_{i_1 \dots \underset{(a)}{m} \dots \underset{(b)}{n} \dots i_p, \, J}\omega_{I,J} \nonumber\\ 
			&\quad - 2 \sum_{1 \le a < b \le q} R_{j_a}{}^m{}_{j_b}{}^n \omega_{I, \, j_1 \dots \underset{(a)}{m} \dots \underset{(b)}{n} \dots j_q}\omega_{I,J} \nonumber\\ 
			&\quad - 2 \sum_{a=1}^p \sum_{b=1}^q R_{i_a}{}^m{}_{j_b}{}^n \omega_{i_1 \dots \underset{(a)}{m} \dots i_p, \, j_1 \dots \underset{(b)}{n} \dots j_q}\omega_{I,J}\\ 
			& =pR_{ij}{{\omega }_{iI',J}}{{\omega }_{jI',J}}+qR_{ij}{\omega }_{I,iJ'}{\omega }_{I,jJ'} \nonumber\\ 
			&\quad -p(p-1)R_{ijkl}{\omega }_{jlI'',J}{\omega }_{ikI'',J}-q(q-1)R_{ijkl}{\omega }_{I,jlJ''}{\omega }_{I,ikJ''} \nonumber\\ 
			&\quad -2pqR_{ijkl}{{\omega }_{iI',kJ'}}{{\omega }_{jI',lJ'}} \nonumber.
		\end{align}
		This together with proposition~\ref{proposition2} yields
		\begin{align*}
			\frac{3}{2}g(\Ric_L(\omega ),\omega )=&g(\bar{R}({\omega }^{S_0^2}),{\omega }^{S_0^2})\\
			&+\left( p-\frac{2(p+q)p}{n} \right)R_{ij}{{\omega }_{iI',J}}{{\omega }_{jI',J}}+\left(q-\frac{2(p+q)q}{n} \right){R_{ij}}{{\omega }_{I,iJ'}}{{\omega }_{I,jJ'}} \\ 
			& +\frac{(p+q)^2}{n^2}scal\left| \omega  \right|^2-pq({\omega }_{iI',lJ'}{\omega }_{jI',kJ'}+{\omega }_{iI',kJ'}{\omega }_{jI',lJ'})R_{kijl}\\
			&-3pqR_{ijkl}{\omega }_{iI',kJ'}{{\omega }_{jI',lJ'}}.
		\end{align*}
		By Bianchi identity
		\begin{align*}
			& {\omega }_{iI',kJ'}{\omega }_{jI',lJ'}R_{kijl} \\ 
			& ={\omega }_{iI',kJ'}{\omega }_{jI',lJ'}(R_{kjil}+R_{klji}) \\ 
			& ={\omega }_{iI',lJ'}{\omega }_{jI',kJ'}R_{kijl}-{\omega }_{iI',kJ'}{\omega }_{jI',lJ'}R_{ijkl}, 
		\end{align*}
		we obtain
		\begin{align*}
			\frac{3}{2}g(\Ric_L(\omega ),\omega )=&g(\mathring{R}({\omega}^{S_0^2}),{\omega }^{S_0^2})+\left( p-\frac{2(p+q)p}{n} \right)R_{ij}{\omega }_{iI',J}{\omega }_{jI',J}\\
			&+\left( q-\frac{2(p+q)q}{n} \right)R_{ij}{\omega }_{I,iJ'}{\omega }_{I,jJ'}+\frac{(p+q)^2}{n^2}scal\left| \omega  \right|^2\\
			&-2pqR_{kijl}{{\omega }_{iI',lJ'}}{\omega }_{jI',kJ'}-2pqR_{ijkl}{\omega }_{iI',kJ'}{\omega }_{jI',lJ'}. 
		\end{align*}
		This completes the proof the first part of Proposition~\ref{proposition1}.
		Now let us prove  (\ref{pqbochner}). Set 
		$$W^{I',J'}=({\omega }_{iI',jJ'}+{\omega }_{jI',iJ'})e_i\bar{\circ }e_j.$$
		If $\boldsymbol{c}\omega=0$,  
		then $W^{I',J'}$ is a trace-free symmetric $2$-tensor. Hence we get
		\begin{eqnarray}\label{tensor}
			&&g(\mathring{R}(W^{I',J'}),W^{I',J'})\nonumber\\
			&=&({\omega }_{iI',jJ'}+{\omega }_{jI',iJ'})({\omega }_{kI',lJ'}+{\omega }_{lI',kJ'})g(\bar{R}(e_i\bar{\circ} e_j),e_k\bar{\circ} e_l) \nonumber\\ 
			&=&\frac{1}{2}\left({\omega }_{iI',jJ'}{\omega }_{kI',lJ'}+{\omega }_{iI',jJ'}{\omega }_{lI',kJ'}+{\omega }_{jI',iJ'}{\omega }_{kI',lJ'}+{\omega }_{jI',iJ'}{\omega }_{lI',kJ'} \right)(R_{iklj}+R_{ilkj}) \nonumber\\ 
			&=&-2{\omega }_{iI',jJ'}{\omega }_{kI',lJ'}R_{ikjl}-2{\omega}_{iI',jJ'}{\omega }_{lI',kJ'}R_{ikjl}.
		\end{eqnarray}
	\end{proof}

	\section{proof of theorem B, C and  D}

	To apply the weighting principle, we need to compute the highest weight and the totally weight corresponding to each term in equation (\ref{pqbochner}).
	\begin{proposition}\label{weighting estimate}
		For a $(p,q)$-double form $\omega$ satisfying Bianchi identity and $S\in S_0^2(V)$,
		\begin{align*}
			&(i)\left| {{\omega }^{S_{0}^{2}}} \right|^2=[\frac{n+2}{2}(p+q)-\frac{n+2}{2n}{{(p+q)}^{2}}+(p+1)q]\left| \omega  \right|^2+pq\left| \boldsymbol{c}\omega  \right|^2-\frac{q}{p+1}|\mathcal{B}\omega|^2,\\
			&(ii){{\left| S\omega  \right|}^{2}}\le \left( p+q+2\min \{p,q\}-\frac{{{\left( p+q \right)}^{2}}}{n} \right){{\left| S \right|}^2}\left| \omega  \right|^2.
		\end{align*}
	\end{proposition}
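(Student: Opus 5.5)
For (i) I would use $|\omega^{S_0^2}|^2=|\omega^{S^2}|^2-\frac{(p+q)^2}{n}|\omega|^2$. This holds because $\omega^{S_0^2}=\omega^{S^2}-\frac{p+q}{n}\omega\otimes g$ and $\langle\omega^{S^2},\omega\otimes g\rangle=(p+q)|\omega|^2$, with $|g|^2=n$. Next I expand $|\omega^{S^2}|^2=\sum_{i\le j}|E_{ij}\omega|^2$ in an orthonormal basis $E_{ij}$ of $S^2(V)$ built from the $e_i\bcirc e_j$, with the appropriate normalizations. Equivalently I take the trace $\sum_{i,j}g((e_i\bcirc e_j)\omega,(e_i\bcirc e_j)\omega)$ with weight $2$ off the diagonal, and use the second formula of Proposition~\ref{compute} with $k=i$, $l=j$ and summation over $i,j$.

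Contracting the deltas in the $\frac p4$ and $\frac q4$ terms gives multiples $\frac{(n+1)}{2}p|\omega|^2$ and $\frac{(n+1)}{2}q|\omega|^2$. The $\frac{p(p-1)}{2}$ and $\frac{q(q-1)}{2}$ terms give $\frac{p(p-1)}{2}|\omega|^2$ (minus the skew partner, which vanishes upon contraction or gives $-|\omega|^2$-type terms). The remaining terms give multiples of $|\omega|^2$, which must combine with the previous pieces into $\frac{n+2}{2}(p+q)-\frac{p^2+q^2}{2}\cdot$(appropriate), so that after subtracting $\frac{(p+q)^2}{n}$ the total is the stated coefficient $\frac{n+2}{2}(p+q)-\frac{n+2}{2n}(p+q)^2+(p+1)q$.

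The mixed $\frac{pq}{2}$ terms produce two types of quantity. The first is $\omega_{iI',iJ'}\omega_{jI',jJ'}=|\boldsymbol c\omega|^2$ up to a factor, which gives the $pq|\boldsymbol c\omega|^2$ term. The second is the cross term $\sum\omega_{iI',jJ'}\omega_{jI',iJ'}$, which I eliminate using the norm identity (\ref{Bianchi}): it equals $\frac{1}{p}|\omega|^2-\frac{1}{p(p+1)}|\mathcal B\omega|^2$. Multiplying by $pq$ yields the $q|\omega|^2$ contribution and $-\frac{q}{p+1}|\mathcal B\omega|^2$. The bookkeeping of these constants is routine but error-prone, and I would check it against $\omega=\theta\otimes 1$ (so $q=0$), which recovers the NPW formula $|\theta^{S_0^2}|^2=\frac{p(n-p)(n+2)}{2n}|\theta|^2$.

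For (ii) I would diagonalize $S=\operatorname{diag}(a_1,\dots,a_n)$ with $\sum a_i=0$. Then $(S\omega)_{I,J}=(\sum_{i\in I}a_i+\sum_{j\in J}a_j)\omega_{I,J}$, so
\[
|S\omega|^2\le \max_{I,J}\Bigl(\sum_{i\in I}a_i+\sum_{j\in J}a_j\Bigr)^2|\omega|^2 .
\]
If $|I\cap J|=r\le\min\{p,q\}$, the sum is $\sum_{m}c_m a_m$, where $c_m=2$ on $I\cap J$, $c_m=1$ on the symmetric difference, and $c_m=0$ elsewhere. Maximizing $(\sum c_ma_m)^2$ subject to $\sum a_m=0$ and $\sum a_m^2=|S|^2$ gives $|c-\bar c\mathbf 1|^2|S|^2=\bigl(\sum c_m^2-\frac{(\sum c_m)^2}{n}\bigr)|S|^2$. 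Here $\sum c_m^2=4r+(p+q-2r)=p+q+2r$ and $\sum c_m=p+q$, so the bound is $\bigl(p+q+2r-\frac{(p+q)^2}{n}\bigr)|S|^2$. This is maximized at $r=\min\{p,q\}$, which gives (ii). I expect the main obstacle to be part (i), specifically tracking the normalization of the $S^2$ basis and the sign conventions of the mixed terms.
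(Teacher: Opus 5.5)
Your proposal is correct and is essentially the paper's argument. Part (ii) is the same mean-subtracted Cauchy--Schwarz estimate with multiplicities $c_m\in\{0,1,2\}$. For part (i), the paper reaches the same intermediate identity $|\omega^{S_0^2}|^2=\bigl[\frac{n+2}{2}(p+q)-\frac{n+2}{2n}(p+q)^2+pq\bigr]|\omega|^2+pq|\boldsymbol{c}\omega|^2+pq\sum\omega_{iI',jJ'}\omega_{jI',iJ'}$ by inserting the round-sphere curvature (where $\mathring R=\mathrm{id}$) into Proposition~\ref{proposition2}; this is just a repackaging of your direct trace of Proposition~\ref{compute}, and the cross term is then removed with (\ref{Bianchi}) exactly as you do.

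Two bookkeeping points. First, $|\omega^{S^2}|^2=\sum_{i,j}|(e_i\bar{\circ}e_j)\omega|^2$ over ordered pairs with no extra weight. Second, the $\frac{p(p-1)}{2}$-term, read as $\frac{p(p-1)}{2}(\omega_{ilI'',J}\omega_{jkI'',J}+\omega_{ikI'',J}\omega_{jlI'',J})$, contributes exactly $-\frac{p(p-1)}{2}|\omega|^2$. Hence $\frac{n+1}{2}(p+q)-\frac{p(p-1)+q(q-1)}{2}-\frac{(p+q)^2}{n}$ is the bracket above, and the extra $q|\omega|^2$ comes from the cross term.
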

	\begin{proof}
		(i) The norm $\left| {\omega }^{S_0^2}\right|^2$ can be computed by taking $R_{ijkl}=\delta_{ik}\delta_{jl}-\delta_{il}\delta_{jk}$ in Proposition~\ref{proposition2}. We get
		$$\left| {{\omega }^{S_{0}^{2}}} \right|^2=[\frac{n+2}{2}(p+q)-\frac{n+2}{2n}{{(p+q)}^{2}}+pq]\left| \omega  \right|^2+pq\left| \boldsymbol{c}\omega  \right|^2+pq{\omega}_{iI',jJ'}{\omega }_{jI',iJ'}$$
		and notice that from (\ref{Bianchi})
		\begin{align*}
			\left| \mathcal{B}\omega  \right|^2 =(p+1)\left| \omega  \right|^2-p(p+1){\omega }_{iI',jJ'}{\omega }_{jI',ij_2....j_q}.  
		\end{align*}
		
		(ii) For $S\in S_{0}^{2}(V)$, there is an orthonormal basis $e_1,\ldots ,e_n$ for V and $a_1,\ldots ,a_n\in \mathbb{R}$ such that $Se_i=a_ie_i$ for $i=1,\ldots n$. It follows that
		\[{{\left( S\omega  \right)}_{I,J}}=\left( \sum\limits_{i}{c_ia_i} \right){\omega}_{I,J},\]
		where $c_i=\left\{ \begin{aligned}
			& 0\qquad \text{for} \quad i\in {\{1,\ldots n\}}/{I\cup J}\; \\ 
			& 1\qquad \text{for} \quad i\in {I\cup J}/{I\cap J}\; \\ 
			& 2\qquad \text{for} \quad i\in I\cap J 
		\end{aligned} \right.$.
		
		Now we estimate maximum of $\left( \sum\limits_{i}{c_ia_i} \right)^2$ under constraints
		$\sum\limits_{i}{a_i}=0$.
		If we set $\left| I\cap J \right|=r$, then $\sum\limits_{i}{c_i}=p+q$ and $\sum\limits_{i}{c_i^2}=p+q+2r$. Choose $\mu=\frac{p+q}{n}$, then by  the Cauchy-Schwarz inequality
		
		\begin{align*}
			\left( \sum\limits_{i}{c_ia_i} \right)^2=&\left( \sum\limits_{i}{(c_i-\mu )a_i }\right)^2\le \left( \sum\limits_{i}{(c_i-\mu )^2} \right)\left( \sum\limits_{i}{a_i^2} \right) \\ 
			& =\left(\sum\limits_{i}{(c_i-\mu )^2} \right)|S|^2\\ 
			& =\left( p+q+2r-\frac{(p+q)^2}{n} \right)|S|^2\\ 
			& \le \left(p+q+2\min \{p,q\}-\frac{(p+q)^2}{n}\right)|S|^2.
		\end{align*}
		Therefore,
		\[|S\omega|^2=\sum_{I,J}\left( \sum\limits_{i}{c_ia_i} \right)^2{\omega}_{I,J}^2\le \left( p+q+2\min \{p,q\}-\frac{{{\left( p+q \right)}^{2}}}{n} \right){{\left| S \right|}^2}\left| \omega  \right|^2.\]
	\end{proof}
	\begin{remark}
		The estimate in Proposition~\ref{weighting estimate} is sharp if we choose, for example, $\omega = e^1 \wedge\cdots\wedge e^p \otimes e^1 \wedge\cdots\wedge e^p $ and $S\in S_0^2(V)$ given by $S(e_i) = a_ie_i$ with 
		\begin{align*}
			a_1=\cdots=a_p=|S|\sqrt{\frac{n-p}{np}},\\
			a_{p+1}=\cdots=a_n=|S|\sqrt{\frac{p}{n(n-p)}}.
		\end{align*}
	\end{remark}

	By proposition~\ref{weighting estimate} we have
	\begin{equation}\label{weight1}
		g(\mathring{R}({\omega}^{S_0^2}),{\omega }^{S_0^2})\ge \left[\mathring{R},p+q+2\min \{p,q\}-\frac{(p+q)^2}{n},\frac{n+2}{2}(p+q)-\frac{n+2}{2n}(p+q)^2+pq+q \right]\left|\omega\right|^2.
	\end{equation}
	
	Since $tr(\mathring{R})=\frac{n+2}{2n}scal$, the scalar curvature of $M$ satisfies 
	\begin{equation}\label{scalar}
		scal\ge\frac{2n}{n+2}[\mathring{R},1,\frac{(n-1)(n+2)}{2}].
	\end{equation}
	It follows from \cite[Lemma 3.14]{NPW23} and \cite{FL1}, the Ricci curvature of $M$ satisfies 
	\begin{equation}\label{Ricci1}
		Ric\ge \left[ \mathring{R},\frac{n}{n+2},n-1 \right],
	\end{equation}
	and for $p \ge 2$, 	
	\begin{align}\label{Ricci2}
		\sum\limits_{i=1}^{p}{R_{ii}}&\ge \frac{n-p+1}{n-p+2}[\mathring{R},2,p(n-1)]+\frac{p}{n(n-p+2)}scal \nonumber\\ 
		& \ge \frac{n-p+1}{n-p+2}[\mathring{R},2,p(n-1)]+\frac{p}{n(n-p+2)}\frac{2n}{n+2}[\mathring{R},1,\frac{(n-1)(n+2)}{2}] \nonumber\\ 
		& \ge [\mathring{R},\frac{2(n+1)}{n+2},p(n-1)]. 
	\end{align}
	
	Let $\{S_\alpha\}_{\alpha=1}^N$ be an orthonormal eigenbasis of $\mathring R$ with corresponding eigenvalues $\lambda_\alpha$. then the last term of (\ref{pqbochner}) can be written as
	\[
	\sum_{I',J'}{g(\mathring{R}(W^{I',J'}),W^{I',J'})}=\sum_{\alpha ,I',J'}{\lambda_\alpha \left(W_{\alpha }^{I'.J'} \right)^2}.
	\]
	Furthermore, applying (\ref{tensor}) to the curvature tensor $R_{ijkl}={\delta }_{ik}{\delta }_{jl}-{\delta }_{il}{\delta }_{jk}$ gives
	\begin{equation*}
		\sum\limits_{\alpha ,I',J'}\left( W_{\alpha }^{I',J'} \right)^2=\left(2+\frac{2}{p} \right)\left|\omega \right|^2.
	\end{equation*}
	According to an elementary Lagrange multiplier argument we have
	\begin{equation}\label{tensor1}
		\sum_{I',J'}{g(\mathring{R}(W^{I',J'}),W^{I',J'})}\ge \left(2+\frac{2}{p}\right)\left[\mathring{R},\frac{n^2+n-4}{n^2+n-2},1\right]\left|\omega\right|^2.
	\end{equation}
	Combining (\ref{weight1})-(\ref{tensor1}), we obtain the following estimate. 	
	\begin{proposition}\label{estimate}
		Let $\omega$ be a $(p,q)$ double form with $p,q\ge 0$ and $p+q\le \frac{n}{2}$,  satisfying the first Bianchi identities and its first contraction vanishing. Then
		\begin{align*}
			\frac{3}{2}g(\Ric_L(\omega),\omega)\ge\left[\mathring{R}, A(p,q,n),\frac{3}{2}[n(p+q)-p^2-q^2]+3q\right]|\omega|^2.
		\end{align*}
	\end{proposition}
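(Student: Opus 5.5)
We start from the Bochner identity \eqref{pqbochner} of Proposition~\ref{proposition1}, which applies because $\boldsymbol{c}\omega=0$. The right-hand side has five terms. We bound each one below by a weighted sum $[\mathring R,\Omega,S]|\omega|^2$ and then add the five bounds using Lemma~\ref{weight principle}(3). Highest weights and total weights both add. So we must check two things: the total weights sum to $\frac32[n(p+q)-p^2-q^2]+3q$, and the highest weights sum to at most $A(p,q,n)$. If the sum is smaller, Lemma~\ref{weight principle}(2) lets us enlarge it.

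\textbf{First term.} Here we use \eqref{weight1}. Since $\mathcal{B}\omega=0$ and $\boldsymbol{c}\omega=0$, Proposition~\ref{weighting estimate}(i) gives the total weight
\[
\tfrac{n+2}{2}(p+q)-\tfrac{n+2}{2n}(p+q)^2+(p+1)q .
\]
Proposition~\ref{weighting estimate}(ii) gives the highest weight
\[
p+q+2\min\{p,q\}-\tfrac{(p+q)^2}{n}.
\]

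\textbf{Fifth term.} This is $pq\sum g(\mathring R(W),W)$. By \eqref{tensor1} it has highest weight $2q(p+1)\frac{n^2+n-4}{n^2+n-2}$ and total weight $2q(p+1)$.

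\textbf{Ricci and scalar terms.} The coefficients $p-\frac{2p(p+q)}{n}$ and $q-\frac{2q(p+q)}{n}$ are nonnegative because $p+q\le n/2$. Diagonalize $\Ric$ at the point with eigenvalues $\rho_i$. Then
\[
R_{ij}\omega_{iI',J}\omega_{jI',J}=\sum_I\Big(\sum_{i\in I}\rho_i\Big)\omega_{I,J}^2\cdot\tfrac{(p-1)!}{p!}.
\]
So the term is a nonnegative combination of partial sums of Ricci eigenvalues.
\begin{itemize}
\item For $p\le 2$, bound each $\rho_i$ separately by \eqref{Ricci1}; this gives highest weight $\frac{n}{n+2}$ per unit.
\item For $p\ge 3$, use \eqref{Ricci2} for the $p$-fold sum; this gives highest weight $\frac{2(n+1)}{n+2}$ in place of $p\frac{n}{n+2}$.
\end{itemize}
The same split applies to $q$, and this is exactly the source of the four cases in $A(p,q,n)$.

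\textbf{Scalar term.} Its coefficient is $\frac{(p+q)^2}{n^2}$, and we bound it by \eqref{scalar}. The highest weights of the $p$-part, the $q$-part and the scalar part then combine into the middle summands of $A$. Two points need care. First, the $-\frac{(p+q)^2}{n}$ coming from the first term combines with these to produce $-\frac{(p+q)^2}{n+2}$ together with the displayed correction terms. Second, the Ricci total weights $(n-1)\big(p-\frac{2p(p+q)}{n}+q-\frac{2q(p+q)}{n}\big)$ and the scalar total weight $\frac{(p+q)^2}{n^2}\cdot n(n-1)$ must add with the other totals to $\frac32[n(p+q)-p^2-q^2]+3q$.

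\textbf{Main obstacle.} The delicate step is the highest-weight bookkeeping in the Ricci terms. The inequality $\Ric\ge[\mathring R,\frac{n}{n+2},n-1]$ holds for each eigenvalue. When several eigenvalues are added, their highest weights add: this gives $p\frac{n}{n+2}$, or $\frac{2(n+1)}{n+2}$ in the $p$-fold case. After multiplying by the coefficients we must confirm that the result matches the stated summands, for instance $\frac{(n-2(p+q))(2n+2+nq)}{n(n+2)}$. If the stated constant turns out to be a slight overestimate of the true sum, Lemma~\ref{weight principle}(2) finishes the argument. The total weights, by contrast, are exact identities: they can be obtained by evaluating everything on the round sphere, where $\mathring R=\mathrm{id}$. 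This serves as the consistency check on the total weight.
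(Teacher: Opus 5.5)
Your proposal follows the paper's proof. The paper substitutes (\ref{weight1}), (\ref{scalar}), (\ref{Ricci1}) or (\ref{Ricci2}) (according to whether $p$, resp.\ $q$, is $\le 2$ or $\ge 3$) and (\ref{tensor1}) into (\ref{pqbochner}), then adds the weights with Lemma~\ref{weight principle}, exactly as you propose.

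The bookkeeping you left open closes exactly, so no enlargement of the highest weight is needed:
\begin{itemize}
\item The Ricci highest weights are $\frac{p(n-2(p+q))}{n+2}$ for $p\le 2$ and $\frac{2(n+1)(n-2(p+q))}{n(n+2)}$ for $p\ge 3$, and likewise for $q$.
\item The scalar term contributes $\frac{2(p+q)^2}{n(n+2)}$, which turns $-\frac{(p+q)^2}{n}$ into $-\frac{(p+q)^2}{n+2}$.
\item Together these reproduce the four formulas for $A(p,q,n)$.
\item The total weights sum to $\frac32(p+q)(n-p-q)+3pq+3q=\frac32[n(p+q)-p^2-q^2]+3q$.
\end{itemize}
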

	\begin{proof}
		The case $p,q\le 2$ follows from substituting (\ref{weight1}),(\ref{scalar}),(\ref{Ricci1}),(\ref{tensor1}) into (\ref{pqbochner}) and applying Lemma~\ref{weight principle}.
		
		The case $p,q\ge 3$ follows similarly from substituting (\ref{weight1}),(\ref{scalar}),(\ref{Ricci2}),(\ref{tensor1}) into (\ref{pqbochner}) and applying Lemma~\ref{weight principle}
		
		The case $p\ge 3,q\le 2$ (similarly, $q\ge 3,p\le 2$),  for the term $\left( p-\frac{2(p+q)p}{n} \right)R_{ij}{\omega }_{iI',J}{\omega }_{jI',J}$ in (\ref{pqbochner}) we use (\ref{Ricci2}) and (\ref{Ricci1}) for the term $\left( q-\frac{2(p+q)q}{n} \right)R_{ij}{\omega }_{I,iJ'}{\omega }_{I,jJ'}$.
		
	\end{proof}
	\begin{remark}
		This is a generalization of \cite[Proposition 3.15]{NPW23} where we identity $p$-form as $(p,0)$ double form and \cite[Proposition 3.5]{FL1} for $p=q=2$.
	\end{remark}

	\begin{proof}[Proof of Theorem B]							
		The fact that all harmonic $(p,q)$ double forms are vanishing (resp. $\omega $ must be parallel) if the curvature operator of the second kind is $C(p,q,n)$-positive (resp. $C(p,q,n)$-nonnegative) is a direct consequence of the weight principle, Proposition~\ref{estimate} and the Bochner technique	
	\end{proof}
	
	\begin{proof}[Proof of Theorem C]
		According to \cite{Kulkarni,Labbi} any $(p, p)$ double form $\omega$ has an orthogonal decomposition $\omega = g^i\omega_{p-i}$
		where $g^i$ are the exterior powers of the metric $(1, 1)$ double form $g$ and the $\omega^k$ are $(k, k)$ double
		forms such that $c\omega^k = 0$. It follows from \cite{Kulkarni} that if $\omega$ is symmetric and satisfies the first Bianchi identity
		then so are all the components $\omega_i$ for $1 \le i \le p$.  By applying Theorem~\ref{theorem-A} to the distinct $\omega^k$ components, we deduce that $\omega^k = 0$ for every $k$ with $1 \le k \le j$ if $\mathring{R}$ is $C'(j,n)$-positive. Conversely, the vanishing of $\omega^k$ for all $1 \le k \le j$ is equivalent to the condition $c^{p-j}\omega = \lambda g^j$ with a constant $\lambda$, as established in \cite[Proposition 2.2]{Labbi2}.
	\end{proof}
	
	\begin{proof}[Proof of Theorem D]
		The orthogonal decomposition of the double form $\omega $ takes the form
		\[\omega ={\omega }_p+{g^p}{{\omega }_{0}}={{\omega }_{p}}+\frac{\lambda }{p(n-p+1)}{g^p}.\]
		When $\omega$ is harmonic, it follows from \cite{Labbi3} that ${\omega}_p$ is harmonic. Under the $C(p,n)$-positive condition, Theorem A shows ${\omega }_{p}$ vanish and hence $\omega$ has constant sectional curvature. If $C(p,n)$-nonnegative, then $\omega $ is parallel.
	\end{proof}

\end{document}